\newtheorem*{theorem*}{Theorem}
\newtheorem{theorem}{Theorem}[section]
\newtheorem{lemma}[theorem]{Lemma}
\newtheorem{corollary}[theorem]{Corollary}
\newtheorem{remark}[theorem]{Remark}
\newtheorem{example}[theorem]{Example}
\newtheorem{definition}[theorem]{Definition}
\newtheorem{proposition}[theorem]{Proposition}
\def\sim{\simeq}
\def\epsilon{\varepsilon}
\def\subset{\subseteq}
\newcommand{\PSH}{\mathrm{PSH}}
\newcommand{\ddc}{\mathrm{dd}^{\mathrm{c}}}
\DeclareMathOperator{\vol}{vol}
\DeclareMathOperator{\Tr}{Tr}
\newcommand{\QPSH}{\mathrm{QPSH}}
\DeclareMathOperator{\Rest}{Tr}
\title{The trace operator of quasi-plurisubharmonic functions on compact K\"ahler manifolds}
\author{Tam\'as Darvas, Mingchen Xia}
\date{}
\begin{document}

\maketitle
\begin{abstract}
We introduce the trace operator for quasi-plurisubharmonic functions on compact K\"ahler manifolds, allowing to study the singularities of such functions along submanifolds where their generic Lelong numbers vanish. Using this construction we obtain novel $L^2$ extension theorems and give applications to  restricted volumes of big line bundles.
\end{abstract}

\section{Introduction}

Let $(X,\omega)$ be a connected compact K\"ahler manifold. By $\QPSH(X)$ we denote the set of quasi-plurisubharmonic functions on $X$. These functions are locally the sum of a plurisubharmonic function and a smooth function.

We fix a smooth real closed $(1,1)$-form $\theta$ on $X$,  representing a big cohomology class $\{\theta\} \in H^{1,1}(X,\mathbb R)$. Let $\PSH(X,\theta) \subset \QPSH(X)$ be the space of $\theta$-psh functions, satisfying the condition $\theta_u=\theta + \ddc u \geq 0$, where $\ddc u \coloneqq \frac{\mathrm{i}}{2\pi} \partial \overline{\partial} u$. 

Suppose that $Y \subset X$ is a connected complex submanifold and $u \in \PSH(X,\theta)$ has its generic Lelong number vanishing along $Y$. From the point of view of algebraic geometry this means that $u$ is ``as good as bounded'' at a generic point of $Y$. However, there are plenty of examples where such $u$ is identically equal to $-\infty$ on $Y$. Due to this apparent anomaly, great enigma surrounds the singularity of such potentials $u$ near $Y$. In this work we make steps towards a better understanding, by attaching a `trace' potential $\Rest_Y^\theta(u) \in \PSH(Y,\theta|_Y)$ to such potential $u$ (see \eqref{eq: Tr_general_def_intr} for the precise definition).

Evidencing the usefulness of this concept we  provide a concise application at the interface of algebraic and differential geometry, after obtaining novel $L^2$ extension theorems.
More specifically, when $\theta$ is the curvature form of a big Hermitian line bundle $(L,h)$, we prove that the non-pluripolar volume of $\Rest_Y^\theta(u)$ is the same as the algebraic restricted volume of $u$ to $Y$ (see \eqref{rest_vol_intr}).

\paragraph{The trace of \texorpdfstring{$\theta$}{theta}-psh potentials.}

Given $u \in \PSH(X,\theta)$ recall the following envelope defined in \cite{RWN14}, inspired by ideas of Rashkovskii--Sigurdsson in the local case \cite{RS05}:
\[
P_\theta[u] \coloneqq  \sup\left\{v \in \PSH(X,\theta) :  v\leq 0, v \leq u +C \textup{ for some constant } C \right\} \in \PSH(X,\theta).
\]

Suppose that $Y \subseteq X$ is a connected $m$-dimensional complex submanifold, and $u \in  \PSH(X,\theta)$ such that $\theta_u$ is a K\"ahler current and the generic Lelong number $\nu(u,Y)=0$. Recall that $\nu(u,Y)$ is  the infimum of the Lelong numbers of $u$ along points of $Y$.

Let $u_j \in \PSH(X,\theta)$ be a decreasing quasi-equisingular approximation of $u$, whose existence was proved by Demailly (see Theorem~\ref{thm:Demailly}). Due to $\nu(u,Y)=0$, we get that $u_j
|_Y \not\equiv -\infty$, allowing us to define the trace of $u$ along $Y$:
\[
\Rest^\theta_Y(u) \coloneqq \lim_{j\to\infty} P_{\theta|_Y} [u_j|_Y] \in \PSH(Y,\theta|_Y).
\]
The above decreasing limit exists, because $\sup_X P_{\theta|_Y + \varepsilon_j \omega|_Y} [u_j|_Y]=0$. More importantly, we will show that  $\Rest^\theta_Y(u)$ does not depend on the choice of approximating sequence $u_j$ (\cref{lem: indep_quasi_Tr}).
In case $u \in  \PSH(X,\theta)$, but $\theta_u$ is not a K\"ahler current, we will extend the above definition in the following manner: 
\begin{equation}\label{eq: Tr_general_def_intr}
\Rest^\theta_Y(u) \coloneqq \lim_{\varepsilon \searrow 0} \Rest^{\theta + \varepsilon \omega}_Y(u) \in \PSH(Y,\theta|_Y).
\end{equation}
This definition does not depend on the choice of $\omega$. 
The singularity type of $\Rest^\theta_Y(u)$, unlike the potential $\Rest^\theta_Y(u)$ itself, depends only on the current $\theta_u$, not on how we decompose the current into $\theta$ and $\ddc u$. 
As a guiding principle, all meaningful quantities involving the trace operator should only depend on its singularity type. 

The terminology is inspired from the loose analogy with the classical trace operator in the theory of Sobolev spaces. To be more precise, recall that if $\Omega$ is a bounded domain in $\mathbb{R}^n$ with smooth boundary, the restriction $C^{\infty}(\bar{\Omega})\rightarrow C^{\infty}(\partial \Omega)$ admits a natural continuous extension $\Tr:W^{1,p}(\Omega)\rightarrow W^{1-1/p,p}(\partial \Omega)$ for any $p>1$.
In our setting, $X$ and $Y$ are the analogues of $\Omega$ and $\partial \Omega$.
One could regard potentials $u$ with analytic singularity type as the analogues of $C^{\infty}(\bar{\Omega})$, as in this case the singularity types of $\Rest^\theta_Y(u)$ and $u|_Y$ are the same (see Example~\ref{ex:analytic}). The general potentials $u \in \PSH(X,\theta)$ are analogues of Sobolev functions in this picture.

\paragraph{Applications to restricted volumes.} 
The (algebraic) restricted volume goes back to work of Tsuji \cite{Ts06} with thorough initial treatments by \cite{ELMNP, Mat13}, and spectacular applications in \cite{HM06,Tak06,BFJ09} etc.  

We start with a more general approach to this concept. Given $u \in \PSH(X,\theta)$, by $\mathcal I(u) \subset \mathcal O_X$ we denote the multiplier ideal sheaf of $u$, namely, the coherent ideal sheaf of holomorphic functions $f$ whose germs satisfy $\int |f|^2 \mathrm{e}^{-u}\omega^n < \infty$.

Let $(L,h)$ be a big line bundle on $X$ with curvature form $\theta \coloneqq \frac{\mathrm{i}}{2\pi} \Theta(h)$. The restricted volume of $u$ to $Y$, with respect to $L$ is given by the following quantity:
\begin{equation}\label{rest_vol_intr}
\vol_{X|Y}(L,u) \coloneqq \varlimsup_{k\to\infty} \frac{m!}{k^m} \dim_{\mathbb C}\left\{ s|_Y:  s\in H^0(X,\mathcal{O}_X(L)^k \otimes \mathcal I(k u))\right\}.
\end{equation}

Clearly, $\vol_{X|Y}(L,u) =0$ if $\nu(u,Y) >0$. So this concept is meaningful only in case $\nu(u,Y) =0$, when $\Rest_Y^\theta(u)$ is defined. As we will see shortly, this is not a coincidence.

One can also define a notion of restricted volume for $u$ using $H^0(Y,\mathcal{O}_Y(L)^k \otimes \mathcal I(ku)|_Y)$, the space of global sections of $L|_Y^k$ that have local extensions that are square integrable with respect to $\mathrm{e}^{-ku}$. However, thanks to the next theorem, this notion turns out to recover the same restricted volume defined in  \eqref{rest_vol_intr}. More importantly, all these notions of restricted volume agree with 
\[
\int_Y\left(\theta|_Y+\ddc \Rest_Y^\theta(u)\right)^m,
\]
the non-pluripolar volume of $\Rest_Y^\theta(u)$ defined in the sense of \cite{BEGZ10}. In addition to all this, we also obtain that the limit exists in the definition of $\vol_{X|Y}(L,u)$ in \eqref{rest_vol_intr}: 

\begin{theorem}[{\cref{thm: relativeDX} and \cref{thm: rest_volume_2}}]\label{thm: rest_volume_intr} Let $Y \subset X$ be an $m$-dimensional connected complex submanifold and $(L,h)$ be a Hermitian big line bundle  with curvature form $\theta \coloneqq \frac{\mathrm{i}}{2\pi} \Theta(h)$. Let $u\in \PSH(X,\theta)$ with  $\nu(u,Y) =0$. Then for any holomorphic line bundle $T$ on $X$ we have that
\begin{equation}\label{eq:restr_volume_intr_1}
    \int_Y \left(\theta|_Y+\ddc \Rest_Y^\theta(u)\right)^m = \lim_{k \to \infty} \frac{m!}{k^m} \dim_\mathbb C H^0(Y,\mathcal{O}_Y(L)^k\otimes \mathcal{O}_Y(T) \otimes \mathcal I(ku)|_Y). 
\end{equation}
In case $\theta_u$ is a K\"ahler current, we also have 
\begin{flalign}\label{eq:restr_volume_intr_2}
    \int_Y \left(\theta|_Y+\ddc \Rest_Y^\theta(u)\right)^m  &= \vol_{X|Y}(L,u)\\
    &=\lim_{k \to \infty} \frac{m!}{k^m} \dim_{\mathbb C}\left\{ s|_Y:  s\in H^0(X,\mathcal{O}_X(L)^k \otimes \mathcal{O}_X(T) \otimes \mathcal I(k u))\right\}. \nonumber
\end{flalign}
\end{theorem}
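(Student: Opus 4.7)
The plan is to first establish both equalities under the K\"ahler current hypothesis for $\theta_u$, and then reduce \eqref{eq:restr_volume_intr_1} in the general case to this via the perturbation $\theta \mapsto \theta + \varepsilon \omega$ followed by $\varepsilon \searrow 0$. The perturbation preserves the multiplier ideal sheaves $\mathcal I(ku)$ on the algebraic side, while on the geometric side the trace transforms continuously by the very definition \eqref{eq: Tr_general_def_intr}, and continuity of the non-pluripolar Monge--Amp\`ere mass of the trace in $\varepsilon$ will follow routinely from the BEGZ framework combined with the envelope bound $\sup_Y P_{\theta|_Y + \varepsilon \omega|_Y}[\cdot] = 0$ noted in the introduction.

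For the K\"ahler current case, I would take a decreasing quasi-equisingular approximation $u_j \in \PSH(X,\theta)$ of $u$ with analytic singularities, so that $\Rest_Y^\theta(u) = \lim_{j} P_{\theta|_Y}[u_j|_Y]$ by construction. For each fixed $u_j$, pass to a log resolution $\pi : \tilde X \to X$ along which $\pi^* \mathcal I(ku_j)$ becomes invertible; the strict transform $\tilde Y$ of $Y$ escapes the exceptional locus because $\nu(u_j, Y) = 0$, and classical asymptotic Riemann--Roch on $\tilde Y$ should yield
\[
\lim_{k \to \infty} \frac{m!}{k^m} \dim H^0(Y, \mathcal O_Y(L)^k \otimes \mathcal O_Y(T) \otimes \mathcal I(ku_j)|_Y) = \int_Y \bigl(\theta|_Y + \ddc P_{\theta|_Y}[u_j|_Y]\bigr)^m,
\]
the twist by $T$ being a lower-order perturbation that does not affect the leading coefficient. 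The identification of the right-hand side uses that, for analytic singularities, $P_{\theta|_Y}[u_j|_Y]$ and $u_j|_Y$ share the same singularity type modulo bounded error on a Zariski open subset of $Y$ (as in Example~\ref{ex:analytic}). Passing $j \to \infty$ then combines the decreasing-limit continuity of non-pluripolar products on the left with the stabilization of $\mathcal I(ku_j) \searrow \mathcal I(ku)$ along quasi-equisingular approximations on the right, yielding \eqref{eq:restr_volume_intr_1} under the K\"ahler current hypothesis.

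To obtain \eqref{eq:restr_volume_intr_2}, it remains to compare the restricted dimension $\dim\{s|_Y : s \in H^0(X, \mathcal O_X(L)^k \otimes \mathcal O_X(T) \otimes \mathcal I(ku))\}$ with the intrinsic dimension on $Y$. The obvious inclusion into $H^0(Y, \mathcal O_Y(L)^k \otimes \mathcal O_Y(T) \otimes \mathcal I(ku)|_Y)$ gives an upper bound for $\vol_{X|Y}(L,u)$ by the trace volume. The reverse direction---and thereby the existence of the limit in the definition of $\vol_{X|Y}$---requires an Ohsawa--Takegoshi extension adapted to our setting: every section on $Y$ with the appropriate $L^2$ integrability against $\mathrm e^{-ku}$ must extend to $X$, modulo an ambient canonical twist $K_X \otimes K_Y^{-1}$ absorbed into the flexibility of $T$ (explaining why arbitrary $T$ appears in the statement). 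The main obstacle is precisely the extension: it must hold for $u$ in the K\"ahler current class with singularities controlled only by $\nu(u,Y) = 0$, rather than by an analytic singularity hypothesis on $u$ itself, which is the content of the refined Ohsawa--Takegoshi theorems advertised in the introduction. Once the extension is in hand, the two-sided comparison forces both dimensions to share the same leading-order asymptotic, simultaneously upgrading the $\varlimsup$ in \eqref{rest_vol_intr} to a limit and completing the proof.
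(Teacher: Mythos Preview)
Your strategy for \eqref{eq:restr_volume_intr_2}---trivial inclusion for the upper bound, global Ohsawa--Takegoshi extension adapted to the trace for the lower bound---matches the paper's. The gaps are in your treatment of \eqref{eq:restr_volume_intr_1}.

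In the K\"ahler current case, even granting your asymptotic formula for each fixed $u_j$, the passage $j\to\infty$ gives only the upper bound: from $\mathcal I(ku)\subseteq \mathcal I(ku_j)$ one gets $\varlimsup_k \tfrac{m!}{k^m}\, h^0(Y,\ldots\otimes\mathcal I(ku)|_Y)\le \int_Y(\theta|_Y+\ddc u_j|_Y)^m$, and then $j\to\infty$. The lower bound does not follow from the claimed ``stabilization $\mathcal I(ku_j)\searrow\mathcal I(ku)$'': quasi-equisingularity (\cref{def:equising}) only guarantees $\mathcal I(\lambda' u_j)\subseteq\mathcal I(\lambda u)$ for $\lambda'>\lambda$ and $j$ large, which says nothing about $\mathcal I(ku_j)$ versus $\mathcal I(ku)$ at the same level $k$, and in any case would not by itself justify the interchange $\lim_j\lim_k=\lim_k\lim_j$ on section dimensions. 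The paper obtains the lower bound from a different source: the sheaf inclusion $\mathcal I(k\Rest_Y^\theta(u))\subseteq\mathcal I(ku)|_Y$ of \cref{thm:OT}, combined with \cite[Theorem~1.1]{DX21} applied directly to $\Rest_Y^\theta(u)$ on $Y$. You invoke the refined extension theorem only for \eqref{eq:restr_volume_intr_2}, but it is already essential here. (Incidentally, the asymptotic formula for $h^0(Y,\ldots\otimes\mathcal I(ku_j)|_Y)$ in the analytic case is itself more delicate than a log-resolution Riemann--Roch, since $\mathcal I(ku_j)|_Y$ need not coincide with $\mathcal I(ku_j|_Y)$; the paper handles this via the auxiliary sheaf $\mathcal J$ and a weight $\psi_Y$ with $\mathcal I(\psi_Y)=\mathcal I_Y$.)

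Your reduction of the general case by ``perturbing $\theta\mapsto\theta+\varepsilon\omega$'' also fails as written: $\theta+\varepsilon\omega$ is not the curvature of any metric on $L$ unless $\{\omega\}$ happens to lie in $c_1(L)\otimes\mathbb{Q}$, so there is no line bundle on which to invoke the K\"ahler current case, and the $h^0$ on the algebraic side has nowhere to go. The paper fixes this by taking $\omega$ to be the curvature of a very ample bundle $A$, passing to the genuine line bundle $L^{k_0}\otimes A$ (for which $k_0 u$ is the potential of a K\"ahler current), and using multiplication by a section $\phi\in H^0(X,A)$ not vanishing identically on $Y$ to inject the original section spaces into those of powers of $L^{k_0}\otimes A$; sending $k_0\to\infty$ recovers the general statement via \cref{lem: trace_vol_prop}(ii).
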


Due to \eqref{eq:restr_volume_intr_1} and \eqref{eq:restr_volume_intr_2}, as well as \eqref{eq: trace_vol_rest_volume_intr} below, the quantity $\int_Y \left(\theta|_Y+\ddc \Rest_Y^\theta(u)\right)^m$ should be perceived as the transcendental restricted volume $\vol_{X|Y}(\{\theta\},u)$.

The K\"ahler current condition for $\theta_u$ can not be dropped in \eqref{eq:restr_volume_intr_2}. Indeed, in view of \eqref{eq: rest_vol_CT_def_intr} below and the comments following \eqref{eq: Mats_thm}, this formula typically fails to hold for $V_\theta$, the potential with minimal singularity type from $\PSH(X,\theta)$, recalled in \eqref{V_theta_def}.

When $Y=X$, the above result recovers \cite[Theorem~1.1]{DX21}. Also, we bring attention to the recent intriguing paper \cite{CMN23}, where the authors study restricted volumes for sections that vanish to some asymptotic order $\tau_j > 0$ along a configuration of subvarieties $S_j \subset X$. Based on  \eqref{eq:restr_volume_intr_2} and ideas from the proof of Corollary~\ref{cor: rest_volume_2}, we expect that the limit of $H^0_0(X|Y,L^p)/p^m$ in their Theorem~1.2 exists and is equal to the mass $\int_Y (\theta|_Y + \ddc \Tr_Y^\theta(u))^m$, where $u \in \PSH(X,\theta)$ is the supremum of all non-positive $\theta$-psh functions on $X$, with Lelong numbers along the $S_j$ at least equal to $\tau_j$. 

\medskip

Lastly, we discuss applications to the classical restricted volume.

The non-nef locus of a big class $\{\theta\}$ is the locus where the Lelong numbers of $V_\theta$ are positive. By Siu's semicontinuity theorem \cite{Siu74}, this set is a countable union of analytic sets.
If $Y \subset X$ is contained in the non-nef locus, one simply defines the (transcendental) restricted volume of $\{\theta\}$ to $Y$ as $\vol_{X|Y}(\{\theta\}) \coloneqq 0$. Otherwise, one employs the following formula  \cite[p.2]{CT22}, that builds on similar notions from \cite{Bo02, Mat13, His12}:
\begin{equation}\label{eq: rest_vol_CT_def_intr}
        \vol_{X|Y}(\{\theta\})\coloneqq \lim_{\varepsilon \searrow 0}\sup_{\varphi} \int_Y (\theta|_Y + \varepsilon \omega|_Y +\ddc \varphi|_Y)^m,
\end{equation} 
where the supremum runs over $\varphi \in \PSH(X,\theta + \varepsilon \omega)$ with analytic singularity type and $\varphi|_Y \not \equiv -\infty$.

The staring point for our inquiry is  Theorem~\ref{thm: rest_volume_tr_volume}, connecting the transcendental restricted volume and the trace, for $Y$ smooth, not contained in the non-nef locus of $\{\theta\}$:
\begin{equation}\label{eq: trace_vol_rest_volume_intr}
\vol_{X|Y}(\{\theta\}) = \int_Y \left(\theta|_Y+\ddc \Rest_Y^\theta(V_\theta)\right)^m.
\end{equation}

If we take $u= V_\theta$ in \eqref{rest_vol_intr} we recover the (algebraic) restricted volume of $L$ of \cite{Ts06, ELMNP, Mat13}:
\begin{flalign}
\label{classical_rest_vol_def_intr}
\vol_{X|Y}(L) &\coloneqq \varlimsup_{k\to\infty} \frac{m!}{k^m} \dim_{\mathbb C}\left\{ s|_Y:  s\in H^0(X,\mathcal{O}_X(L)^k  \otimes \mathcal I(k V_\theta))\right\} \nonumber \\
&=\varlimsup_{k\to\infty} \frac{m!}{k^m} \dim_{\mathbb C}\left\{ s|_Y:  s\in H^0(X,\mathcal{O}_X(L)^k)\right\}. 
\end{flalign}

It is shown in \cite[Corollary~2.15]{ELMNP} that limsup in the above definition is actually a limit. Moreover, when $Y$ is not contained in the augmented base locus of $L$, it is pointed out in \cite[Theorem~1.3]{Mat13}  that the transcendental and algebraic restricted volumes agree (c.f. \cite[Theorem~1.3]{His12}): 
\begin{equation}\label{eq: Mats_thm}
\vol_{X|Y}(\{\theta\}) = \vol_{X|Y}(L).
\end{equation}
We give a novel proof for this identity using the trace operator in Corollary~\ref{cor: rest_volume_2}. See Section~\ref{sec:notation}, for the definition of the augmented base locus/non-K\"ahler locus.

Due to  \cite[Example~2.5]{CPW18} and \cite[Example~2.11]{CT22}, this identity does not hold if $Y$ is contained in the augmented base locus of $L$. As  result, it remains desirable to recover $\vol_{X|Y}(\{\theta\})$ using algebraic information of the line bundle $L$, for arbitrary smooth $Y$. We accomplish this in our last corollary, that is simply a concatenation of \eqref{eq:restr_volume_intr_1} for $u= V_\theta$ and \eqref{eq: trace_vol_rest_volume_intr}:
\begin{corollary}[{\cref{thm: rest_volume_tr_volume} and \cref{cor: rest_volume_2}}]\label{cor: rest_volume_intr} Let $(L,h)$ be a Hermitian big line bundle on $X$ with curvature form $\theta: = \frac{i}{2\pi} \Theta(h)$. Let $Y \subset X$ be an $m$-dimensional connected complex submanifold not contained in the non-nef locus of $L$. Then
\begin{flalign}\label{eq:restr_volume_cor_intr_1}
\int_Y \left(\theta|_Y+\ddc \Rest_Y^\theta(V_\theta)\right)^m& = \vol_{X|Y}(\{\theta\}) \\
& =  \lim_{k \to \infty} \frac{m!}{k^m} h^0(Y,\mathcal{O}_Y(L)^k\otimes \mathcal{O}_Y(T) \otimes \mathcal I(kV_{\theta})|_Y). \nonumber
\end{flalign}
The second equality holds for $Y$ contained in the non-nef locus as well, in this case both quantities involved are just zero.
\end{corollary}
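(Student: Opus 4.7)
The plan is to obtain the corollary as a direct concatenation of \cref{thm: rest_volume_tr_volume} and \cref{thm: rest_volume_intr}, supplemented by a short argument for the case when $Y$ lies in the non-nef locus.

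Suppose first that $Y$ is not contained in the non-nef locus of $L$. By Siu's semicontinuity theorem, the set $\{\nu(V_\theta, \cdot) > 0\}$ is a countable union of analytic subsets, so irreducibility of $Y$ forces $\nu(V_\theta, Y) = 0$, and $\Rest_Y^\theta(V_\theta) \in \PSH(Y, \theta|_Y)$ is well-defined. The first equality in \eqref{eq:restr_volume_cor_intr_1} is then \eqref{eq: trace_vol_rest_volume_intr} applied with $u = V_\theta$, while the second equality is \eqref{eq:restr_volume_intr_1} of \cref{thm: rest_volume_intr} applied with $u = V_\theta$ and the auxiliary line bundle $T$. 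No further work is needed in this case.

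Now suppose $Y$ lies in the non-nef locus, so $c := \nu(V_\theta, Y) > 0$. Then $\vol_{X|Y}(\{\theta\}) = 0$ by definition~\eqref{eq: rest_vol_CT_def_intr}. To show that the right-hand side of \eqref{eq:restr_volume_cor_intr_1} also vanishes, I would use Skoda--Demailly integrability estimates together with the uniform bound $\nu(V_\theta, y) \geq c$ for all $y \in Y$: for $k$ large, the multiplier ideal $\mathcal{I}(kV_\theta)$ enjoys vanishing of order at least $\lfloor kc \rfloor - n$ at every point of $Y$. This constrains the restricted sheaf $\mathcal{I}(kV_\theta)|_Y$ sufficiently that, by a standard cohomological bound involving symmetric powers of the conormal bundle $N_{Y/X}^\vee$,
\[ \dim_{\mathbb{C}} H^0\bigl(Y, \mathcal{O}_Y(L)^k \otimes \mathcal{O}_Y(T) \otimes \mathcal{I}(kV_\theta)|_Y\bigr) = o(k^m), \]
so dividing by $k^m/m!$ and passing to the limit yields zero.

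The main (and only) non-formal step is this last cohomological bound in the non-nef case, where one must convert the pointwise Skoda estimate along $Y$ into a quantitative dimension bound for the twisted global sections. The remaining case is a direct assembly of previously established theorems in the paper.
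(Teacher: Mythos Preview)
Your proposal is correct and, for the main case ($Y$ not in the non-nef locus), follows exactly the paper's route: the paper itself says the corollary ``is simply a concatenation of \eqref{eq:restr_volume_intr_1} for $u=V_\theta$ and \eqref{eq: trace_vol_rest_volume_intr}'', which is precisely what you do.

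For the non-nef case the paper offers no argument beyond the assertion that both sides are zero, whereas you supply one. Your argument works, but it is more elaborate than necessary and the appeal to ``symmetric powers of the conormal bundle'' suggests you may be conflating the inverse image ideal sheaf with the tensor-product pullback. Recall from the discussion after \cref{thm:OT} that in this paper
\[
\mathcal{I}(kV_\theta)|_Y \;=\; \mathcal{I}(kV_\theta)\big/\bigl(\mathcal{I}(kV_\theta)\cap \mathcal{I}_Y\bigr).
\]
With $c=\nu(V_\theta,Y)>0$ one has $\nu(kV_\theta,y)\geq kc$ for every $y\in Y$, so by Skoda's theorem $\mathcal{I}(kV_\theta)_y\subseteq \mathfrak{m}_y$ for all $y\in Y$ once $kc\geq n$. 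Hence every local section of $\mathcal{I}(kV_\theta)$ vanishes along $Y$, i.e.\ $\mathcal{I}(kV_\theta)\subseteq \mathcal{I}_Y$, and therefore $\mathcal{I}(kV_\theta)|_Y=0$ as a sheaf. Thus $h^0(Y,\mathcal{O}_Y(L)^k\otimes\mathcal{O}_Y(T)\otimes\mathcal{I}(kV_\theta)|_Y)=0$ for all large $k$, and no asymptotic cohomological bound is needed.
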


For $Y$ smooth, this corollary strengthens  \cite[Theorem~2.13]{ELMNP} and \cite[Proposition~4.3]{Mat13} in two ways. First, we note that their works only consider \eqref{eq:restr_volume_cor_intr_1} in case of $Y$ that is not contained in the augmented base locus/non-K\"ahler locus.
Second, the above-mentioned results only consider limsup on the right-hand side of \eqref{eq:restr_volume_cor_intr_1}, and the existence of this limit appears to be novel. See \cite[Theorem~1.3]{CT22} for a related result that involves global sections and twisting by ample bundles.

Involving embedded resolution of singularities (as surveyed in \cite{Wlo09}), we expect  that \eqref{eq:restr_volume_cor_intr_1} can be eventually shown for a (singular) subvariety $Y \subset X$ as well. We avoid this direction here, as it would require defining the trace operator in this more general setting (see \cite[Section~2.3]{XiaNA}).

\paragraph{$L^2$ extension theorems.} \cref{thm: rest_volume_intr} will be obtained using techniques of \cite{DX22}, and  $L^2$  extension theorems for the trace operators. Such results are reminiscent of the classical Ohsawa--Takegoshi theorem \cite{OT87}, but they lack an $L^2$ bound.

By its very nature, the trace operator facilitates the following sheaf-theoretic $L^2$ extension theorem:

\begin{theorem}[{\cref{thm:OT}}]\label{thm: OT_intr} Suppose that $u\in \PSH(X,\theta)$ satisfies $\nu(u,Y)=0$, where $Y \subset X$ is a connected complex submanifold. For any $\lambda >0$ we have
\[
\mathcal I(\lambda \Rest_Y^\theta(u)) \subseteq \mathcal I(\lambda u)|_Y.
\]
\end{theorem}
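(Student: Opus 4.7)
The approach is to combine Demailly's quasi-equisingular approximation $u_j \searrow u$---whose restrictions $u_j|_Y$ are nontrivial thanks to $\nu(u,Y)=0$---with the classical sheaf-theoretic Ohsawa--Takegoshi extension theorem, and to pass to the limit using the defining property of the trace operator.

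The statement is local on $Y$, so I would fix $y \in Y$, a germ $f \in \mathcal I(\lambda \Rest_Y^\theta(u))_y$, and a small Stein neighborhood $U$ of $y$ in $X$. From the decreasing convergence $\Rest_Y^\theta(u) = \lim_j P_{\theta|_Y}[u_j|_Y]$ we obtain $P_{\theta|_Y}[u_j|_Y] \geq \Rest_Y^\theta(u)$, hence the uniform bound
\[
\int_{U\cap Y} |f|^2 e^{-\lambda P_{\theta|_Y}[u_j|_Y]} \, dV_Y \;\leq\; \int_{U\cap Y} |f|^2 e^{-\lambda \Rest_Y^\theta(u)} \, dV_Y \;=:\; M \;<\; \infty.
\]
Since $u_j$ has analytic singularities and we may normalize $\sup_X u_j = 0$, I expect the envelope $P_{\theta|_Y}[u_j|_Y]$ and $u_j|_Y$ to differ by a function controlled well enough to yield $\int_{U \cap Y} |f|^2 e^{-\lambda u_j|_Y} \, dV_Y \lesssim M$ uniformly in $j$.

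With this input in hand, the classical sheaf-theoretic OT applied to the ambient weight $\lambda u_j$ produces local holomorphic extensions $F_j$ on $U$ with $F_j|_{Y \cap U} = f$ and $\int_U |F_j|^2 e^{-\lambda u_j} \, dV_X \lesssim M$ uniformly in $j$. I would then extract a weak limit $F$ of the sequence $\{F_j\}$ via $L^2$-compactness (plus a normal-families argument for the pointwise extension property $F|_Y = f$), and combine Demailly's quasi-equisingular property $\mathcal I((1+\delta)\lambda' u_j) \subseteq \mathcal I(\lambda' u)$ for $j$ large with Demailly's strong openness theorem to conclude $\int_U |F|^2 e^{-\lambda u} \, dV_X < \infty$, i.e., $f \in \mathcal I(\lambda u)|_Y$.

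The main obstacle is the final passage from the uniform $e^{-\lambda u_j}$-bounds on the $F_j$ to an $e^{-\lambda u}$-bound on $F$: since $u_j \geq u$, the naive monotone-convergence direction fails, so I must work with $\lambda' < \lambda$, invoke the quasi-equisingular containment to land in $\mathcal I(\lambda' u)$, and then take $\lambda' \nearrow \lambda$ via strong openness. A secondary technical point is controlling the envelope-vs.-restriction gap $P_{\theta|_Y}[u_j|_Y] - u_j|_Y$ uniformly in $j$; this is where the fact that $u_j$ is quasi-equisingular (rather than an arbitrary decreasing sequence) becomes essential.
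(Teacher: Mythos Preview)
Your plan has the right ingredients but assembles them in the wrong order, and this creates two genuine gaps.

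\textbf{The ``main obstacle'' resolution is incorrect.} You propose to show $F\in \mathcal I(\lambda' u)$ for $\lambda'<\lambda$ and then let $\lambda'\nearrow\lambda$ ``via strong openness''. But strong openness goes the other way: it says $\mathcal I(\lambda u)=\bigcup_{\lambda'>\lambda}\mathcal I(\lambda' u)$, \emph{not} $\mathcal I(\lambda u)=\bigcap_{\lambda'<\lambda}\mathcal I(\lambda' u)$. The latter is false in general (take $u=\log|z|^2$ on $\mathbb C$ and $\lambda=1$: then $1\in\mathcal I(\lambda' u)$ for every $\lambda'<1$ but $1\notin\mathcal I(u)$). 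So landing in $\mathcal I(\lambda' u)|_Y$ for all $\lambda'<\lambda$ does not give $\mathcal I(\lambda u)|_Y$.

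\textbf{The ``secondary technical point'' is also a real gap.} You need $\int_{U\cap Y}|f|^2 e^{-\lambda u_j|_Y}\leq C$ \emph{uniformly in $j$} to run a limit argument. While $u_j|_Y\sim P_{\theta|_Y}[u_j|_Y]$ for each fixed $j$, the implied constants $C_j$ in $u_j|_Y\geq P_{\theta|_Y}[u_j|_Y]-C_j$ have no reason to be bounded as $j\to\infty$; the quasi-equisingular property controls multiplier ideals, not these additive constants.

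Both problems disappear if you apply strong openness at the \emph{start}, on $Y$, to the trace itself: given $f\in\mathcal I(\lambda\,\Rest_Y^\theta(u))_y$, Guan--Zhou gives $f\in\mathcal I(\lambda'\,\Rest_Y^\theta(u))_y$ for some $\lambda'>\lambda$. Since $\Rest_Y^\theta(u)\preceq u_j|_Y$, you get $f\in\mathcal I(\lambda' u_j|_Y)_y$ for every $j$. Now choose a \emph{single} $j$ large enough that $\mathcal I(\lambda' u_j)\subseteq\mathcal I(\lambda u)$ (this is exactly the quasi-equisingular condition), and apply the classical sheaf-theoretic Ohsawa--Takegoshi once to that $u_j$. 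No weak limits, no uniformity, no wrong-way openness. This is precisely the paper's argument.
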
 
Here $\mathcal I(\lambda u)|_Y$ simply means the sheaf of holomorphic functions on $Y$ that have local extensions inside  $\mathcal I(\lambda u)$ (see the discussion following \cref{thm:OT}).

In the particularly mysterious case when $u|_Y \equiv -\infty$ but $\nu(u,Y)=0$, this result yields a completely novel $L^2$ extension theorem. Moreover, this theorem refines the sheaf-theoretic part of the classical Ohsawa--Takegoshi extension theorem, when $u|_Y \not \equiv -\infty$ \cite{OT87}. Indeed, in this case  we have $u|_Y \leq \Rest_Y^\theta(u)+C$ for some constant $C$ (Remark~\ref{rem:naiverestprecRest}), implying that
\[
\mathcal I(\lambda u|_Y) \subseteq \mathcal I(\lambda \Rest_Y^\theta(u)) \subseteq \mathcal I(\lambda u)|_Y.
\]

The next global $L^2$ extension theorem also plays an important role in our arguments:

\begin{theorem}[{\cref{thm: OT_ext_global}}]\label{thm: OT_ext_global_intr} Let $Y \subset X$ be a connected complex submanifold. Take a big Hermitian line bundle $(L,h)$ with curvature form $\theta \coloneqq \frac{\mathrm{i}}{2\pi} \Theta(h)$, and a holomorphic line bundle $T$ on $X$. 

Suppose $u\in \PSH(X,\theta)$, $\theta_u$ is a K\"ahler current and $\nu(u,Y)=0$. Then there exists $k_0$ such that for all $k \geq k_0$ and  $s \in H^0(Y, \mathcal{O}_Y(L)^k\otimes \mathcal{O}_Y(T) \otimes \mathcal I(k \Rest_Y^\theta(u)))$, there exists an extension $\tilde s \in H^0(X, \mathcal{O}_X(L)^k \otimes \mathcal{O}_X(T)  \otimes \mathcal I(ku))$ of $s$.
\end{theorem}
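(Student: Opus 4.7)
The plan is to use Demailly's quasi-equisingular approximation to reduce to the analytic-singularity case, apply the classical singular-metric Ohsawa--Takegoshi extension theorem at each level of the approximation, and then pass to the limit via Hörmander-type $L^2$ compactness. Concretely, I would invoke Theorem~\ref{thm:Demailly} to obtain $u_j \in \PSH(X, \theta + \epsilon_j \omega)$ with $u_j \searrow u$, each $u_j$ having analytic singularities, $\epsilon_j \searrow 0$, and $\sup_X u_j = 0$. Since $\theta_u$ is a K\"ahler current, $\theta_{u_j} + \epsilon_j \omega$ remains a K\"ahler current with a uniform lower bound on $\omega$; and since $\nu(u_j, Y) \to \nu(u, Y) = 0$, the restrictions $u_j|_Y$ are genuine analytic-singularity $(\theta+\epsilon_j\omega)|_Y$-psh functions, not identically $-\infty$, for $j$ large.

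The first key observation is that the given section $s$, a priori only $L^2$ against $e^{-k\Tr_Y^\theta(u)}$, is in fact $L^2$ against each $e^{-ku_j|_Y}$. Indeed, by the defining decreasing limit $\Tr_Y^\theta(u) \leq P_{(\theta+\epsilon_j\omega)|_Y}[u_j|_Y]$, and since $u_j|_Y$ has analytic singularities its envelope shares the same multiplier ideal, so $\mathcal{I}(k\Tr_Y^\theta(u)) \subseteq \mathcal{I}(ku_j|_Y)$. The classical Ohsawa--Takegoshi extension theorem for singular metrics (Demailly, or Cao--Demailly--Matsumura) then produces, for $k \geq k_0$ independent of $j$, extensions $\tilde s_j \in H^0(X, \mathcal{O}_X(L)^k \otimes \mathcal{O}_X(T) \otimes \mathcal{I}(ku_j))$ of $s$, together with the $L^2$ estimate
\[
\int_X |\tilde s_j|^2\, e^{-ku_j}\, \omega^n \leq C \int_Y |s|^2\, e^{-ku_j|_Y}\,(\omega|_Y)^m,
\]
the constant $C$ uniform in $j$ thanks to the uniform K\"ahler-current bound. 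Provided the right-hand side stays bounded in $j$, Montel's theorem extracts a subsequential holomorphic limit $\tilde s$ of $\{\tilde s_j\}$, and Fatou combined with the monotone convergence $e^{-ku_j} \nearrow e^{-ku}$ gives $\int_X |\tilde s|^2 e^{-ku}\,\omega^n < \infty$, so $\tilde s \in \mathcal{I}(ku)$; the condition $\tilde s|_Y = s$ passes to the limit since all $\tilde s_j|_Y = s$.

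The main obstacle is precisely this uniform $j$-bound on $\int_Y |s|^2 e^{-ku_j|_Y}$. As $u_j|_Y \searrow u|_Y$ and $u|_Y$ may be identically $-\infty$, the weights can explode unless one controls the defect $u_j|_Y - \Tr_Y^\theta(u)$ from above independently of $j$. I would attack this by exploiting the normalization $\sup_X u_j = 0$ together with a quantitative version of the envelope-vs.-analytic-singularity comparison $u_j|_Y - P_{(\theta+\epsilon_j\omega)|_Y}[u_j|_Y] \leq C_0$, aimed at keeping $C_0$ uniform in $j$. Failing this, the alternative is to replace classical Ohsawa--Takegoshi by a more flexible variant whose $L^2$ condition on $Y$ is stated directly in terms of $\Tr_Y^\theta(u)$, thereby bypassing the uniformity issue altogether; the trace operator developed in this paper seems tailor-made for such a formulation.
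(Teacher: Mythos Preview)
Your approach has a genuine gap at exactly the point you identify: the uniform bound on $\int_Y |s|^2 e^{-ku_j|_Y}$ is \emph{false} in the interesting case $u|_Y \equiv -\infty$. Indeed, the envelopes $P_{(\theta+\epsilon_j\omega)|_Y}[u_j|_Y]$ decrease to the finite potential $\Tr_Y^\theta(u)$, while $u_j|_Y \searrow u|_Y \equiv -\infty$; hence $P_{(\theta+\epsilon_j\omega)|_Y}[u_j|_Y] - u_j|_Y$ is unbounded in $j$, and the $L^2$ right-hand side of your Ohsawa--Takegoshi estimate blows up. Montel/Fatou therefore cannot conclude anything.

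The paper sidesteps the limit entirely by a two-line trick that you are missing. First, apply Guan--Zhou strong openness on $Y$ to the given $s$: for some small $\varepsilon>0$ (depending on $s$ and $k$), one has $s \in H^0\big(Y,\mathcal{O}_Y(L)^k\otimes \mathcal{O}_Y(T)\otimes \mathcal I\big(k(1+\varepsilon)\Tr_Y^\theta(u)\big)\big)$. Since $\Tr_Y^\theta(u)\preceq u_j^D|_Y$, this gives $s\in \mathcal I\big(k(1+\varepsilon)u_j^D|_Y\big)$ for every $j$, so classical Ohsawa--Takegoshi produces an extension $\tilde s_j \in H^0\big(X,\mathcal{O}_X(L)^k\otimes \mathcal{O}_X(T)\otimes \mathcal I\big(k(1+\varepsilon)u_j^D\big)\big)$. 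Second, invoke the \emph{defining property} of the quasi-equisingular approximation (\cref{def:equising}(iii)): with $\lambda'=k(1+\varepsilon)>\lambda=k$, one has $\mathcal I\big(k(1+\varepsilon)u_j^D\big)\subseteq \mathcal I(ku)$ for a single sufficiently large $j$. That $\tilde s_j$ is already the desired extension; no compactness argument, no uniform $L^2$ bound, and no limit is needed. The $\varepsilon$-room from strong openness is precisely what converts the approximation into a direct inclusion of multiplier ideals on $X$.
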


While not needed for our purposes, it remains an interesting question whether the $L^2$-norm of $\tilde{s}$ can be controlled, as in the more classical versions of the Ohsawa--Takegoshi theorem \cite{OT87, Bl_sui, GZ_ann}.

The K\"ahler current condition for $\theta_u$ can not be dropped in this result. Indeed, this theorem typically fails to hold for $V_\theta$. Suppose in contrary that the result held for $V_\theta$, then the short argument of Theorem~\ref{thm: rest_volume_2} would also hold for $V_\theta$, yielding \eqref{eq:restr_volume_intr_2} for $V_\theta$. However this is false, per the comments after Theorem~\ref{thm: relativeDX}.

\paragraph{Further directions and applications.} 
 When $\int_Y \left(\theta|_Y+\ddc \Rest^\theta_Y(u)\right)^m=0$, the trace operator defined in \eqref{eq: Tr_general_def_intr} exhibits pathological behaviors. To make this concept more robust, one could perturb $\theta$ to $\theta+\omega$. Of course the  function $\Rest^{\theta+\omega}_Y(u)$ depends on the choice of $\omega$, but one can show that its $\mathcal{I}$-equivalence class is uniquely determined by the current $\theta_u$ (see \cref{lem: trace_vol_prop}(iv)). Using this observation, the trace operator has an intriguing application in non-Archimedean pluripotential theory. Boucksom--Jonsson \cite{BJ18b} developed a global pluripotential theory over the Berkovich analytification of $X$ (with respect to the trivial valuation on $\mathbb{C}$). In \cite{DXZ23}, we introduced a transcendental approach to Boucksom--Jonsson's theory using the theory of test curves.  
If we apply the trace operator pointwise to the test curve representation of a non-Archimedean metric $\phi$, we get the restriction of $\phi$ to the Berkovich analytification of $Y$.
This result is explained in detail in \cite[Theorem~4.21]{XiaNA}, and we refer the reader to this companion paper for more details.

As in \cite[Theorem~1.2]{DX21},  \cref{thm: rest_volume_intr} should allow to prove a weak convergence result about quantization of equilibrium measures. Results of similar nature have been obtained in \cite{BBWN11, CMN23,His12,Vuajm}.

The trace operator is closely linked to transcendental Okounkov bodies, as studied in \cite{Deng17, DRWNXZ}. It allows one to define the valuation vector of a closed positive $(1,1)$-current along a smooth flag, leading to an alternative description of transcendental Okounkov bodies of a big cohomology class. This definition agrees with the one in \cite{Deng17}; see \cite[Section~10.3.3]{Xiabook} for details.

Recently, the theory of trace operators has been applied to the study of b-divisors, yielding some novel estimates of the volumes of currents. We refer to \cite[Theorem~C]{Xiabdiv2}.

Understanding to what extent can potentials dominated by $\Tr_Y^\theta(V_\theta)$ be extended from $Y$ to a $\theta$-psh functions on $X$, would likely lead to progress on the null-locus = non-K\"ahler locus conjecture of Collins--Tosatti (see \cite[Conjecture 1.1]{CT22}). Most likely, this will require a much deeper understanding of extension phenomenon in K\"ahler geometry (c.f. \cite{Bl_sui,GZ_ann,CGZ13, CT14,DGZ16,DRWNXZ}).

\paragraph{Acknowledgments.} We benefited from discussions with Sébastien Boucksom, Junyan Cao, Huayi Chen, Siarhei Finski, Vincent Guedj, Henri Guenancia, Chen Jiang, Rémi Reboulet,  Yi Yao, David Witt Nystr\"om, Kewei Zhang, Zhiwei Wang and Yu Zhao. We thank Sébastien Boucksom, Shin-ichi Matsumura, and  Valentino Tosatti for their comments on a preliminary draft.

The first-named author is partially supported by an Alfred P. Sloan Fellowship and National Science Foundation grants DMS–1846942,DMS-2405274.
The second-named author is supported by the Knut och Alice Wallenbergs Stiftelse grant KAW 2021.0231.

\section{Notations and preliminary results}\label{sec:notation}
In the whole paper, $\ddc$ is defined as $\frac{\mathrm{i}}{2\pi}\partial \overline{\partial}$. All Monge--Amp\`ere type products are taken in the non-pluripolar sense of \cite{BEGZ10}.

Let $X$ be a connected compact K\"ahler manifold of dimension $n$ and $\theta$ be a smooth closed real $(1,1)$-form on $X$ representing a pseudoeffective cohomology class $\{\theta\}$. 

By $\QPSH(X)$ we denote the set of quasi-plurisubharmonic functions (qpsh) on $X$. We denote by $\PSH(X,\theta)$ the set of $\theta$-plurisubharmonic  ($\theta$-psh) functions on $X$. 
We write
\begin{equation}\label{V_theta_def}
V_{\theta}=\sup\{u\in \PSH(X,\theta): u\leq 0\}\in \PSH(X,\theta).
\end{equation}
By $\PSH_>(X,\theta) \subset \PSH(X,\theta)$ we denote the space of potentials $u$ such that $\theta_u\coloneqq \theta+\ddc u$ is a K\"ahler current. We assume that $\{\theta\}$ is big, i.e.,  $\PSH_{>}(X,\theta)\neq \varnothing$.
\paragraph{K\"ahler locus, nef locus, augmented base locus.}We say that $x \in X$ is contained in the K\"ahler locus of $\{\theta \}$ if there exists $u \in \PSH_>(X,\theta)$ that is smooth in a neighborhood of $x$. As shown in \cite[Théorème~2.1.20(ii)]{Bou02}, the K\"ahler locus is a Zariski open set, i.e., the non-K\"ahler locus $\mathrm{nK}(\{\theta\}) \subset X$ is an analytic set. When $\{\theta\}$ is the first Chern class of a big line bundle $L$, then the non-K\"ahler locus is the same as the so called augmented base locus of $L$. We refer to  \cite[Corollaire~2.2.8]{Bou02} for this result, as well as the definition of the augmented base locus.

The non-nef locus $\mathrm{nn}(\{\theta\}) \subset X$ is the set where the Lelong numbers of $V_\theta$ are positive. Due to Siu's semicontinuity theorem \cite{Siu74}, $\mathrm{nn}(\{\theta\})$ is a countable union of analytic sets and $\mathrm{nn}(\{\theta\}) \subseteq \mathrm{nK}(\{\theta\})$.
\paragraph{Singularity types and envelopes.} For $u,v\in \QPSH(X)$, we write 
\[
u\preceq v
\]
if $u\leq v+C$ for some $C\in \mathbb{R}$. The corresponding equivalence relation is denoted by $\sim$, and the equivalence classes are the singularity types $[u]$.

Following \cite{KS20} and \cite[Section~2.4]{DX22}, it is possible to refine $\preceq$ in the following manner: given $u,v\in \QPSH(X)$, we write that 
\[
u \preceq_\mathcal I v
\]
if for all  $\lambda >0$ we have that the inclusion of multiplier ideal sheaves $\mathcal I(\lambda u) \subset \mathcal I(\lambda v)$. The corresponding notion of equivalence relation will be denoted by $\sim_\mathcal I$, and the equivalence classes are the $\mathcal I$-singularity types $[u]_\mathcal I$. 

As proved in \cite[Corollary~2.16]{DX22} (rephrasing  \cite{BFJ08}), $u\preceq_{\mathcal{I}}v$ if and only if for any prime divisor $E$ over $X$, we have
$\nu(u,E)\geq \nu(v,E)$. Here $\nu(u,E)$ is the generic Lelong number of $u$ along $E$. More precisely, take a proper smooth bimeromorphic model $\pi\colon Y\rightarrow X$ such that $E$ is a prime divisor on $Y$. Then $\nu(u,E)$ is the infimum of the Lelong numbers $\nu(\pi^*u,x)$ with $x$ running over $E$. 

Given $u \in \PSH(X,\theta)$, one can define the following notions of envelopes:
\[
\begin{aligned}
P_\theta[u] \coloneqq & \sup\{v \in \PSH(X,\theta) :  v\leq 0, v \preceq u  \} \in \PSH(X,\theta),\\
P_\theta[u]_\mathcal I \coloneqq & \sup\{v \in \PSH(X,\theta): v \leq 0 , v \preceq_\mathcal I u\} \in \PSH(X,\theta).
\end{aligned}
\]
These envelopes were studied in depth in \cite{RWN14,DDNL18mono, DX22, Tr22}. Recall that $\int_X (\theta+\ddc P_\theta[u])^n=\int_X \theta_u^n$, while  $\int_X (\theta+\ddc P_\theta[u]_\mathcal I)^n \geq \int_X \theta_u^n$.

Given $u \in \PSH(X,\theta)$, we say that $u$ is model (or more precisely $\theta$-model) if $u = P_\theta[u]$. Similarly, we say that $u$ is $\mathcal I$-model  if $u = P_\theta[u]_\mathcal I$.

\begin{proposition}\label{prop: vol_limit_model}
Suppose that $u \in \PSH(X,\theta)$ and $u_j \in \PSH(X,\theta + \varepsilon_j \omega)$ for some $\varepsilon_j \searrow 0$.
If $u_j \searrow u$ and $u_j = P_{\theta + \varepsilon_j \omega}[u_j]$ for all $j \geq 0$ then
\[
\lim_{j\to\infty}\int_X (\theta + \varepsilon_j \omega)^n_{u_j}=\int_X \theta_u^n.
\]
Moreover, if $\int_X \theta_u^n>0$, then for any prime divisor $E$ over $X$, we have
\[
\lim_{j\to\infty} \nu(u_j,E)=\nu(u,E).
\]
\end{proposition}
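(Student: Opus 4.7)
My plan is to split the proposition into the volume convergence and the Lelong-number convergence; the first is essentially independent, while the second uses the first.

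For the volume convergence, I would establish matching upper and lower bounds. For the upper bound, fix $\delta>0$ and restrict to $j$ with $\varepsilon_j\le \delta$, so that $u_j\in \PSH(X,\theta+\delta\omega)$ and $u_j\searrow u$ in this fixed class. Upper semicontinuity of the non-pluripolar volume in $L^1$ on a fixed pseudoeffective class (\cite[Proposition~1.20]{BEGZ10}) gives $\limsup_j \int_X (\theta+\delta\omega)^n_{u_j}\le \int_X (\theta+\delta\omega)^n_u$. The multilinear expansion
\[
(\theta+\delta\omega)^n_{u_j}=\sum_{k=0}^n \binom{n}{k}(\delta-\varepsilon_j)^k\,\omega^k\wedge (\theta+\varepsilon_j\omega)^{n-k}_{u_j}
\]
shows this dominates $(\theta+\varepsilon_j\omega)^n_{u_j}$ as a measure, so $\limsup_j \int_X(\theta+\varepsilon_j\omega)^n_{u_j}\le \int_X (\theta+\delta\omega)^n_u$; letting $\delta\searrow 0$, the analogous expansion for $u$ (all mixed masses being finite) sends the right-hand side to $\int_X\theta^n_u$. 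For the lower bound, since $u\in \PSH(X,\theta+\varepsilon_j\omega)$ and $u\le u_j$, envelope monotonicity gives $P_{\theta+\varepsilon_j\omega}[u]\le P_{\theta+\varepsilon_j\omega}[u_j]=u_j$. Combining the mass-preserving property $\int_X(\theta+\varepsilon_j\omega)^n_{P_{\theta+\varepsilon_j\omega}[u]}=\int_X (\theta+\varepsilon_j\omega)^n_u$ with the $\preceq$-monotonicity of the non-pluripolar volume forces $\int_X(\theta+\varepsilon_j\omega)^n_{u_j}\ge \int_X (\theta+\varepsilon_j\omega)^n_u\to \int_X\theta^n_u$.

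For the Lelong-number convergence under the hypothesis $\int_X\theta^n_u>0$, one direction, $\limsup_j\nu(u_j,E)\le \nu(u,E)$, is immediate: $u_j\ge u$ gives $\nu(u_j,x)\le \nu(u,x)$ at every point, and the monotone decrease of $u_j$ makes $\nu(u_j,E)$ monotone non-decreasing, so $\nu_\infty:=\lim_j\nu(u_j,E)\le \nu(u,E)$ exists. For the matching lower bound $\nu_\infty\ge \nu(u,E)$, I would first reduce to $E\subset X$ by passing to a smooth modification $\pi:Y\to X$ on which $E$ is a prime divisor and verifying that the pulled-back sequence essentially inherits the model property in the pulled-back classes (the issue being K\"ahlerness of the base form, which one works around via a further $\varepsilon\pi^*\omega$-perturbation). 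Then I would combine the volume convergence with the characterization in \cite[Corollary~2.16]{DX22} of $\preceq_\mathcal{I}$ via Lelong numbers over prime divisors and a rigidity-type statement in the spirit of \cite{DX22}: for a decreasing sequence of model potentials (in possibly shifting classes) whose non-pluripolar volumes converge to that of a limit of strictly positive volume, the $\mathcal{I}$-singularity classes converge, and this forces $\nu(u_j,E)\to \nu(u,E)$ via the Lelong-number characterization.

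The main obstacle is the Lelong-number lower bound. The volume convergence is relatively routine once the $\delta$-perturbation trick provides access to a fixed-class upper semicontinuity. The genuine difficulty in the second part is that volumes alone do not remember divisorial singularity data; one needs to upgrade volume equality to $\mathcal{I}$-singularity equality, and both the model hypothesis and the strict positivity $\int_X\theta^n_u>0$ are essential---without them, one can construct decreasing sequences of $\theta$-psh functions whose Lelong numbers along a divisor remain strictly below those of the limit.
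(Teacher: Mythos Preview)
Your upper bound for the volume convergence has a genuine gap: the non-pluripolar mass is \emph{not} upper semicontinuous along decreasing sequences in a fixed class. A counterexample is immediate: take any $u\in\PSH(X,\theta+\delta\omega)$ with $\int_X(\theta+\delta\omega)_u^n<\int_X(\theta+\delta\omega)^n$ and set $u_j=\max(u,-j)$; then $u_j\searrow u$ but $\int_X(\theta+\delta\omega)_{u_j}^n=\int_X(\theta+\delta\omega)^n$ for all $j$. Whatever result you have in mind from \cite{BEGZ10}, it cannot give what you claim. Notice that your upper-bound argument never invokes the model hypothesis $u_j=P_{\theta+\varepsilon_j\omega}[u_j]$, which is precisely what rules out this behaviour. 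The paper uses the model property in an essential way: by \cite[Theorem~3.8]{DDNL18mono} the measure $(\theta+\varepsilon_j\omega)_{u_j}^n$ is concentrated on $\{u_j=0\}$, so for fixed $j_0$ one can dominate it by $(\theta+\varepsilon_{j_0}\omega)_{u_j}^n$ on that set; then a genuine semicontinuity result for the restricted masses, \cite[Proposition~4.6]{DDNLmetric}, passes to the limit $\int_{\{u=0\}}(\theta+\varepsilon_{j_0}\omega)_u^n$, and finally $j_0\to\infty$. Your lower bound and the multilinear comparison are fine.

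For the Lelong-number convergence, your outline is both vague and unnecessarily heavy. Pulling back to a modification and trying to salvage the model property on a non-K\"ahler base is delicate, and invoking a rigidity-of-$\mathcal I$-singularity principle is circular in spirit, since such statements are typically proved using exactly this kind of Lelong-number convergence. The paper's argument is elementary and avoids all of this: once the volumes converge and $\int_X\theta_u^n>0$, \cite[Lemma~4.3]{DDNLmetric} supplies $v_j\in\PSH(X,\theta+\varepsilon_j\omega)$ and $\delta_j\searrow 0$ with $(1-\delta_j)u_j+\delta_j v_j\le u$; additivity of Lelong numbers then gives $(1-\delta_j)\nu(u_j,E)+\delta_j\nu(v_j,E)\ge\nu(u,E)\ge\nu(u_j,E)$, and the conclusion follows since $\nu(v_j,E)$ is uniformly bounded by $L^1$-compactness and upper semicontinuity of Lelong numbers. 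No pullback, no reduction to $E\subset X$, no $\mathcal I$-rigidity is needed.
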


\begin{proof} Since $u_j \geq u$, using \cite[Theorem~1.1]{WN19} we get the following:
\[
\varliminf_{j\to\infty} \int_X (\theta + \varepsilon_j \omega)^n_{u_j} \geq  \varliminf_{j\to\infty} \int_X (\theta + \varepsilon_j \omega)^n_{u}  = \int_X \theta_{u}^n\,.
\]
To finish the proof, we will argue that $\varlimsup_j \int_{X} (\theta + \varepsilon_j \omega)^n_{u_j} \leq \int_{X} \theta^n_{u}$. Indeed, fixing $j_0 \in \mathbb N$, we have
\begin{flalign*}
\varlimsup_{j \to \infty} \int_X (\theta + \varepsilon_j \omega)^n_{u_j}&=\varlimsup_{j \to \infty} \int_{\left\{u_j =0\right\}} (\theta + \varepsilon_j \omega)^n_{u_j} \\
&\leq \varlimsup_{j \to \infty} \int_{\left\{u_j =0\right\}} (\theta + \varepsilon_{j_0} \omega)^n_{u_j}\\
&\leq \int_{\left\{u =0\right\}} (\theta + \varepsilon_{j_0} \omega)^n_{u}\,,
\end{flalign*}
where in the first line we used the condition $u_j = P_{\theta + \varepsilon_j \omega}[u_j]$ and \cite[Theorem~3.8]{DDNL18mono}, and in the last line we have used $u_j \searrow u$ and \cite[Proposition~4.6]{DDNLmetric} (see also \cite[Lemma~2.11]{DDNLsurv}). Letting $j_0 \to \infty$, we arrive at the desired conclusion: 
\[
\varlimsup_{j \to \infty} \int_{X} (\theta + \varepsilon_j \omega)^n_{u_j} \leq \varliminf_{j_0 \to \infty}\int_{\{u =0\}} (\theta + \varepsilon_{j_0} \omega)^n_{u}  =  \int_{\{u =0\}} \theta^n_{u} \leq \int_{X} \theta^n_{u}.
\]

Regarding the part involving Lelong numbers, by \cite[Lemma~4.3]{DDNLmetric} and the proceeding result, for  $j$ big enough there exists $v_j \in \PSH(X,\theta+\varepsilon_j\omega)$ and $\varepsilon_j \searrow 0$ such that $(1-\varepsilon_j) u_j + \varepsilon_j v_j \leq u$. Due to additivity of Lelong numbers, for $j$ big enough we have 
\[
(1-\varepsilon_j)\nu(u_j,E) + \varepsilon_j \nu(v_j,E) \geq \nu(u,E) \geq \nu(u_j,E).
\]

However, for fixed $E$, the Lelong number $\nu(\chi,E)$ is uniformly bounded for any $\chi \in \PSH(X,\theta+\omega)$. Indeed, $\nu(\chi,E)$ is upper semicontinuous with respect to the $L^1$-topology of $\chi$ (see \cite[Exercise~2.7]{GZ17}), while the set of $v\in\PSH(X,\theta+\omega)$ with $\sup_X v=0$ is $L^1$-compact. So letting $\varepsilon \searrow 0$ we conclude that $\nu(u_j,E) \to \nu(u,E)$. 
\end{proof}

\paragraph{Quasi-equisingular approximations.}
We say a quasi-plurisubharmonic function $\varphi$ has analytic singularity type (notation: $[\varphi] \in \mathcal A$) if locally $\varphi$ can be written as
\begin{equation}\label{eq: analyt_sing_type_def}
c\log \sum_{i=1}^N |f_i|^2+g,
\end{equation}
where $c\in \mathbb{Q}_{\geq 0}$, $f_1,\ldots,f_N$ is a finite collection of holomorphic functions and $g$ is bounded. If $g$ is smooth, then $\varphi$ is said to have neat analytic singularity type \cite{Dem16ext}. We refer to \cite[Remark 2.7]{DRWNXZ} for a comparison between the different definitions of analytic singularity type.

We recall the following result of Demailly \cite{Dem92} about the usefulness of potentials with analytic singularity type, implicit in the proof of \cite[Theorem~2.2.1]{DPS01}, \cite[Theorem~3.2]{DP04} and \cite[Theorem~1.6]{Dem15}:

\begin{theorem}\label{thm:Demailly} Let $u \in \PSH(X,\theta)$.  Then there
exists $u_k^D \in \PSH(X,\theta + \varepsilon_k \omega)$ with $\varepsilon_k \searrow 0$ such that
\begin{enumerate}[label=(\roman*)]
\item $u^D_k \searrow u$; 
\item $u^D_k$ has analytic singularity type;
\item $\mathcal I( \frac{s{2^k}}{2^k-s} u_k^D) \subseteq \mathcal I(s u) \subseteq \mathcal I(s u_k^D)$ for all $s >0$.
\item For all $x \in X$  there exists generators $f_1,\ldots f_k$ of $\mathcal I(2^k u)$ in a neighborhood $U$ of $x$ such that
\[
u^D_k|_U = \frac{1}{2^k} \log \sum_{j=1}^k |f_j|^2 + g,
\]
for some $g$, bounded on $U$.
\end{enumerate}
\end{theorem}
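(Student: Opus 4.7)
The plan is to construct $u_k^D$ via Demailly's Bergman kernel approximation at scale $m = 2^k$, using a local Ohsawa--Takegoshi extension for the quantitative lower bound and Skoda-type $L^2$-division for the multiplier ideal inclusions in (iii).

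First I would cover $X$ by finitely many coordinate balls $B_j \Subset B'_j$ on which $\theta = \ddc \psi_j$ for smooth $\psi_j$. On each $B'_j$, define
\[
\varphi_{m,j}(x) \coloneqq \frac{1}{2m} \log \sum_\ell |g_\ell(x)|^2 - \psi_j(x),
\]
where $\{g_\ell\}$ is an orthonormal basis of the Hilbert space of holomorphic $f$ on $B'_j$ satisfying $\int_{B'_j} |f|^2 e^{-2m(u+\psi_j)}\omega^n<\infty$; this $\varphi_{m,j}$ is $\theta$-psh on $B'_j$. Coherence of $\mathcal I(2mu)$ together with a standard $L^2$ argument on $B_j$ show that the (a priori infinite) sum above is equivalent, up to a bounded factor on $B_j$, to the sum of squared moduli of a finite set of generators of $\mathcal I(2mu)$; this yields the neat analytic singularity type and the local formula (iv) at $m = 2^k$. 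Gluing the $\varphi_{m,j}$ by the regularized maximum construction (with shifts absorbing the bounded discrepancies on overlaps) produces a global $u_m \in \PSH(X, \theta + \varepsilon_m \omega)$ with $\varepsilon_m = O(1/m)$. Setting $m = 2^k$ and then replacing by the regularized supremum of the tail $\{u_l\}_{l \geq k}$ with geometrically small shifts arranges $u_k^D \searrow u$ as required in (i) and (ii).

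For the convergence in (i), the upper bound $u_k^D(x) \leq \sup_{B(x,r)} u + \frac{C}{2^k}\log \frac{1}{r} + C$ is the submean value estimate applied to $|g_\ell|^2$ on a ball of radius $r$; letting $k \to \infty$ first and then $r \to 0$ yields $\varlimsup_k u_k^D \leq u$. The matching lower bound $u_k^D(x) \geq u(x) - C/2^k$ is produced by invoking Ohsawa--Takegoshi on $B'_j$ to extend the constant function $1$ at $x$ to an element of the Hilbert space above with norm controlled by $e^{2 \cdot 2^k u(x)}$; the extremal property of the Bergman kernel then forces $\sum_\ell |g_\ell(x)|^2 \gtrsim e^{2 \cdot 2^k u(x)}$, from which the bound follows.

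The main obstacle is (iii), the two-sided multiplier ideal comparison. The inclusion $\mathcal I(su) \subseteq \mathcal I(s u_k^D)$ follows directly from the lower bound $u_k^D \geq u - C/2^k$. The reverse inclusion $\mathcal I\bigl(\tfrac{s \cdot 2^k}{2^k - s} u_k^D\bigr) \subseteq \mathcal I(su)$ is the deeper content. Here one exploits that, by construction, $2^k u_k^D = \log \sum_\ell |g_\ell|^2 + O(1)$ with each $g_\ell \in \mathcal I(2^k u)$. Given $f$ with $\int |f|^2 e^{-\frac{s\cdot 2^k}{2^k-s} u_k^D} < \infty$ near a point, one applies Skoda's $L^2$-division theorem to write $f = \sum_\ell h_\ell g_\ell$ with an effective $L^2$ estimate on the $h_\ell$ against a weight of the form $e^{-\alpha \cdot 2^k u_k^D}$ for some $\alpha < 1$, and then combines this with a H\"older inequality whose exponents are calibrated precisely by the fraction $\frac{2^k}{2^k-s}$ so that the weights recombine to $e^{-su}$, yielding $\int |f|^2 e^{-su} < \infty$. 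Executing this $L^2$-division plus H\"older argument with uniform constants across the cover, and patching the resulting local inclusions via coherence of $\mathcal I(su)$, is the technical heart of the theorem.
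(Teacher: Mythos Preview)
Your outline is essentially the construction behind the references the paper cites; the paper's own proof consists of four sentences pointing to \cite{Dem12, Dem15}. For (i), (ii), (iv) your sketch of the Bergman-kernel approximation, the Ohsawa--Takegoshi lower bound, the submean-value upper bound, and the gluing by regularized maxima is the correct content of those references.

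There is, however, a genuine gap in your plan for the first inclusion of (iii). Skoda's $L^2$-division theorem requires the exponent of $\bigl(\sum_\ell |g_\ell|^2\bigr)^{-1}$ in the integrability hypothesis to exceed $\min(n, N-1)$; here that exponent is $s/(2^k - s)$, which is arbitrarily small for large $k$ or small $s$, so Skoda does not apply and you cannot produce a division $f = \sum_\ell h_\ell g_\ell$ with the estimates you propose. The argument that actually proves this inclusion (and is the one behind \cite[Corollary~1.12]{Dem15}) avoids division entirely. With $m = 2^k$ and $K = \sum_j |g_j|^2$ for finitely many local generators $g_j$ of $\mathcal I(mu)$, apply H\"older with exponents $p = m/(m-s)$, $q = m/s$ relative to the measure $|f|^2\, \omega^n$:
\[
\int_U |f|^2 e^{-su}\,\omega^n \leq \Bigl(\int_U |f|^2 K^{-s/(m-s)}\,\omega^n\Bigr)^{(m-s)/m}\Bigl(\int_U |f|^2 K\, e^{-m u}\,\omega^n\Bigr)^{s/m}.
\]
The first factor is finite by hypothesis, and the second is finite because $f$ is locally bounded and each $g_j$ lies in $L^2(e^{-mu})$. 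This is exactly where the fraction $\tfrac{s\cdot 2^k}{2^k - s}$ comes from; the H\"older step you mention is the whole argument, and the Skoda preprocessing is both unnecessary and inapplicable.
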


For a detailed exposition of the construction of the sequence $u^D_k$ we refer to \cite[Theorem~13.12]{Dem12} and its proof.

\begin{proof}
Parts (i) and (ii) follow from \cite[Theorem~1.6]{Dem15}. The second inclusion of (iii) follows from $u \leq u^D_k$, whereas the first inclusion of (iii) follows from \cite[Corollary~1.12]{Dem15}.

Part (iv) follows from the proof of \cite[Theorem~13.12]{Dem12}. Indeed, the $u^D_k$ are constructed locally using generators of $\mathcal I(2^k u)$, and then the local approximations are glued together.
\end{proof}

\begin{remark}\label{rmk:Demapp}
If $\theta_u$ is a K\"ahler current satisfying $\theta_u \geq \delta \omega$, then for the quasi-equisingular approximation $u^D_j$ of \cref{thm:Demailly} one can assume that $\theta_{u^D_j} \geq \frac{\delta}{2} \omega$, after possibly discarding a finite number of terms (see \cite[Theorem~13.12]{Dem12} and its proof).
\end{remark}

In this work we will need more general approximating sequences, leading to the following terminology inspired from \cite[Definition 2.3]{Cao14} (c.f. \cite[Definition 4.1.3]{Dem15}):
\begin{definition}\label{def:equising} Let $u \in \PSH(X,\theta)$.  Then $u_k \in \PSH(X,\theta + \varepsilon_k \omega)$ is a quasi-equisingular approximation of $u$ if  $\varepsilon_k \searrow 0$ and
\begin{enumerate}[label=(\roman*)]
\item $u_k \searrow u$; 
\item $u_k$ has analytic singularity type;
\item For any $\lambda'>\lambda>0$ we have that $\mathcal I( \lambda' u_k) \subseteq \mathcal I(\lambda u)$ for all $k$ big enough.
\end{enumerate}
\end{definition}

In case of non-vanishing non-pluripolar mass, quasi-equisingular approximation can be characterized in a variety of ways that will be useful for us later:

\begin{theorem}\label{thm: quasi_eqvi_eqv}Let $u \in \PSH(X,\theta)$ be a potential satisfying $\int_X \theta_u^n>0$. Let $\varepsilon_k\searrow 0$ be a decreasing sequence of non-negative numbers and $u_k \in \PSH(X,\theta + \varepsilon_k \omega)$ ($k\geq 1$) be a decreasing sequence of potentials with analytic singularity type, with limit $u$. Then the following are equivalent:
\begin{enumerate}[label=(\roman*)]
\item  $\{u_k\}_k$ is a quasi-equisingular approximation of $u$; 
\item  $P_{\theta + \varepsilon_k \omega}[u_k]_{\mathcal{I}}\searrow P_{\theta}[u]_{\mathcal{I}}$ as $k\to\infty$;
\item  $\int_X (\theta + \varepsilon_k \omega)^n_{u_k} \searrow \int_X \theta_{P_{\theta}[u]_{\mathcal{I}}}^n$ as $k\to\infty$.
\end{enumerate}
\end{theorem}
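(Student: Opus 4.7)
Let $\psi_k \coloneqq P_{\theta + \varepsilon_k \omega}[u_k]_{\mathcal I}$. Since $u_{k+1} \leq u_k$ and $\varepsilon_{k+1} \leq \varepsilon_k$, the sequence $\psi_k$ is decreasing, so $\psi \coloneqq \lim_k \psi_k \in \PSH(X,\theta)$ with $\psi \leq 0$. By construction $\psi_k \sim_{\mathcal I} u_k$, and from $u \leq u_k$ we get $P_\theta[u]_{\mathcal I} \preceq_{\mathcal I} u_k \sim_{\mathcal I} \psi_k$; as $P_\theta[u]_{\mathcal I} \leq 0$ is then a competitor in the envelope defining $\psi_k$, we have $P_\theta[u]_{\mathcal I} \leq \psi_k$ for every $k$, and hence $P_\theta[u]_{\mathcal I} \leq \psi$. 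Statement (ii) thus amounts to the reverse inequality $\psi \leq P_\theta[u]_{\mathcal I}$. The plan is to prove (i)$\Rightarrow$(ii), then the equivalence (ii)$\Leftrightarrow$(iii), and finally (ii)$\Rightarrow$(i).

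\textbf{(i)$\Rightarrow$(ii).} It suffices to show $\psi \preceq_{\mathcal I} u$, which by \cite[Corollary~2.16]{DX22} reduces to $\nu(\psi, E) \geq \nu(u, E)$ for every prime divisor $E$ over $X$. From $\psi \leq \psi_k \sim_{\mathcal I} u_k$ we already have $\nu(\psi, E) \geq \nu(\psi_k, E) = \nu(u_k, E)$, so the task is $\lim_k \nu(u_k, E) \geq \nu(u, E)$. Fix $\delta > 0$ and set $\lambda' = \lambda + \delta$; by \cref{def:equising}(iii) there is $k_0 = k_0(\lambda,\delta)$ with $\mathcal I(\lambda' u_{k}) \subseteq \mathcal I(\lambda u)$ for all $k \geq k_0$. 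Taking $\operatorname{ord}_E$, dividing by $\lambda'$, and applying the asymptotic formula $\nu(\phi, E) = \lim_{s \to \infty} s^{-1} \operatorname{ord}_E \mathcal I(s\phi)$ (also in \cite[Corollary~2.16]{DX22}), one obtains $\tfrac{\lambda}{\lambda'}\cdot\tfrac{\operatorname{ord}_E \mathcal I(\lambda u)}{\lambda} \leq \tfrac{\operatorname{ord}_E \mathcal I(\lambda' u_{k_0})}{\lambda'} \leq \nu(u_{k_0}, E)$. Letting $\lambda \to \infty$ (and thus $k_0 \to \infty$) with $\lambda'/\lambda \to 1$ gives $\nu(u, E) \leq \lim_k \nu(u_k, E)$.

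\textbf{(ii)$\Leftrightarrow$(iii) and (ii)$\Rightarrow$(i).} For $u_k$ with analytic singularity, $\psi_k$ is also $P$-model and $\int_X (\theta+\varepsilon_k\omega)^n_{u_k} = \int_X (\theta+\varepsilon_k\omega)^n_{\psi_k}$, following from the rigidity of analytic $\mathcal I$-classes as in \cite{DX22,DX21}. Given (ii), \cref{prop: vol_limit_model} applied to $\psi_k \searrow P_\theta[u]_{\mathcal I}$ delivers (iii). Conversely, assuming (iii), \cref{prop: vol_limit_model} applied to $\psi_k \searrow \psi$ yields $\int_X \theta^n_{\psi} = \int_X \theta^n_{P_\theta[u]_{\mathcal I}}$. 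Since $\psi$ is a decreasing limit of $\mathcal I$-model potentials, it is itself $\mathcal I$-model in class $\{\theta\}$, and together with $\psi \geq P_\theta[u]_{\mathcal I}$ and the strict monotonicity of non-pluripolar volume across $\preceq_{\mathcal I}$ for $\mathcal I$-model potentials \cite{DX22}, the volume equality forces $\psi = P_\theta[u]_{\mathcal I}$, i.e.\ (ii). Finally, to extract (i) from (ii), I use $\mathcal I(\lambda' u_k) = \mathcal I(\lambda' \psi_k)$ combined with Guan--Zhou strong openness $\mathcal I(\lambda u) = \bigcup_{\varepsilon > 0} \mathcal I((\lambda+\varepsilon) u)$ and the analytic structure of $\psi_k$ (which restricts the relevant divisorial valuations to a finite set per $k$) to deduce the inclusion $\mathcal I(\lambda' u_k) \subseteq \mathcal I(\lambda u)$ for any $\lambda' > \lambda$ and $k$ large.

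\textbf{Main obstacle.} The most delicate step is (i)$\Rightarrow$(ii): converting the asymptotic ideal containments of \cref{def:equising}(iii), in which $k_0$ depends on both scales $\lambda$ and $\delta$, into the pointwise Lelong-number inequality $\lim_k \nu(u_k, E) \geq \nu(u, E)$ requires a careful interchange of limits $\lambda \to \infty$ and $k \to \infty$ via the BFJ formula. A related subtlety, essential for the equivalence (ii)$\Leftrightarrow$(iii), is checking that for analytic-singularity potentials the $\mathcal I$-envelope is $P$-model with the same non-pluripolar mass as the potential itself, so that \cref{prop: vol_limit_model} legitimately applies in both directions.
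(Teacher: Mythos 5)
Your proposal follows a different cycle from the paper: you prove (i)$\Rightarrow$(ii), (ii)$\Leftrightarrow$(iii), and then try to close with (ii)$\Rightarrow$(i), whereas the paper proves (iii)$\Rightarrow$(i)$\Rightarrow$(ii)$\Rightarrow$(iii). Your (i)$\Rightarrow$(ii) via the valuative formula $\nu(\phi,E)=\lim_s s^{-1}\operatorname{ord}_E\mathcal I(s\phi)$ and \cite[Corollary~2.16]{DX22} is a workable alternative to the paper's shorter ideal-theoretic argument ($\mathcal I(\lambda' v)\subseteq\mathcal I(\lambda'\psi_k)=\mathcal I(\lambda' u_k)\subseteq\mathcal I(\lambda u)$, then strong openness), and your (ii)$\Leftrightarrow$(iii) via \cref{prop: vol_limit_model}, the fact that a decreasing limit of $\mathcal I$-model potentials is $\mathcal I$-model, and the rigidity of model potentials with equal positive mass is also sound.

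The genuine gap is the closing step (ii)$\Rightarrow$(i). Knowing that $\psi_k\coloneqq P_{\theta+\varepsilon_k\omega}[u_k]_{\mathcal I}\searrow P_\theta[u]_{\mathcal I}$ (equivalently, convergence of masses and of all the Lelong numbers $\nu(u_k,E)\to\nu(u,E)$) does not by itself give $\mathcal I(\lambda'u_k)\subseteq\mathcal I(\lambda u)$ for large $k$. Membership of a germ $f$ in $\mathcal I(\lambda u)$ for a general $u$ is an integrability condition that cannot be read off from divisorial valuations at a single fixed $\lambda$, and decreasing convergence alone never provides an upper bound on $\psi_k$ in terms of its limit (compare $\max(\psi,-k)\searrow\psi$, where every ideal on the left is trivial while $\mathcal I(\lambda\psi)$ need not be). Your appeal to strong openness and to the finitely many valuations attached to the analytic $\psi_k$ only controls which germs lie in $\mathcal I(\lambda'u_k)$; it says nothing about why such germs lie in $\mathcal I(\lambda u)$. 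The missing mechanism — which is exactly how the paper proves (iii)$\Rightarrow$(i) — is to convert the mass convergence into an approximate pointwise domination via \cite[Lemma~4.3]{DDNLmetric}: one produces $\delta_k\searrow 0$ and $w_k\in\PSH(X,\theta+\varepsilon_k\omega)$ with $(1-\delta_k)u_k+\delta_k w_k\leq P_\theta[u]_{\mathcal I}$, then for $f\in\mathcal I(\lambda'u_k)$ one estimates $\int_U|f|^2\mathrm e^{-\lambda u}\,\omega^n$ by H\"older with exponents $p=\lambda'/\lambda$ and $q=\lambda'/(\lambda'-\lambda)$, the error factor $\int_U|f|^2\mathrm e^{q\delta_k w_k}\,\omega^n$ being finite for large $k$ by Skoda's integrability theorem, since the Lelong numbers of the $w_k$ are uniformly bounded. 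Without this domination-plus-Skoda step your cycle of implications does not close.
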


\begin{proof}
We first show that (iii)$\implies$(i) holds. By \cite[Lemma~4.3]{DDNLmetric}, for $k$ big enough, there exists $\delta_k\searrow 0$ and $w_k \in \PSH(X,\theta + \varepsilon_k \omega)$ such that
\[
(1-\delta_k) u_k + \delta_k w_k \leq P_\theta[u]_\mathcal I.
\]
We need to argue that for $\lambda' > \lambda >0$ we have $\mathcal I( \lambda' u_k) \subseteq \mathcal I(\lambda u)$ for $k$ big enough. Let $f$ be a germ of  $\mathcal I( \lambda' u_k)$ at $x \in X$ for some $k \geq 1$. Then 
\[
\int_U |f|^2\mathrm{e}^{-\lambda' u_k}\,\omega^n<\infty
\]
for some neighborhood $U$ of $x$. We have the following inequalities:
\begin{flalign*}
\int_U |f|^2 \mathrm{e}^{-\lambda P_{\theta}[u]_{\mathcal{I}}}\,\omega^n & 
\leq \int_U |f|^2 \mathrm{e}^{-\lambda ((1-\delta_k) u_k + \delta_k w_k)}\,\omega^n  \\
& \leq \bigg(\int_U |f|^2 \mathrm{e}^{-\lambda' (1 - \delta_k) u_k}\,\omega^n\bigg)^\frac{1}{p}\bigg(\int_U |f|^2 \mathrm{e}^{-\lambda q \delta_k w_k}\,\omega^n\bigg)^\frac{1}{q}\\
& \leq \bigg(\int_U |f|^2 \mathrm{e}^{-\lambda' u_k}\,\omega^n\bigg)^\frac{1}{p}\bigg(\int_U |f|^2 \mathrm{e}^{-\lambda q \delta_k w_k}\,\omega^n\bigg)^\frac{1}{q},
\end{flalign*}
where we have used Hölder's inequality with exponents $p = \lambda'/\lambda $ and $q =\lambda'/(\lambda'-\lambda )$. 
As in the proof of \cref{prop: vol_limit_model}, the Lelong numbers of $w_k$ are uniformly bounded on $X$. It follows from Skoda's integrability theorem \cite[Theorem~2.50]{GZ17} that $\int_U |f|^2 \mathrm{e}^{-\lambda q \delta_k w_k}\,\omega^n$ is finite for $k$ big enough. It follows that $f\in H^0(U,\mathcal{I}(\lambda u))$, for high enough $k$, proving that $\mathcal I( \lambda' u_k) \subseteq \mathcal I(\lambda u)$ for $k$ big enough.

Now we address the direction (i)$\implies$(ii). 
Given $\lambda'>\lambda>0$, we have that $\mathcal I( \lambda' u_k) \subseteq \mathcal I(\lambda u)$ for $k$ big enough. 
Let $v \coloneqq \lim_k P_{\theta + \varepsilon_k \omega } [u_k]_\mathcal I \geq P_\theta [u]_\mathcal I$. We obtain that 
$\mathcal I( \lambda' v) \subseteq \mathcal I( \lambda' P_{\theta + \varepsilon_k \omega } [u_k]_\mathcal I)= \mathcal I( \lambda' u_k) \subseteq \mathcal I(\lambda u)$.

Letting $\lambda' \searrow \lambda$, Guan--Zhou's strong openness theorem \cite{GZ15} gives that $\mathcal I( \lambda v )  \subseteq \mathcal I(\lambda u)$, giving that $v \preceq_\mathcal I u$. This implies that $v \leq P_\theta[u]_\mathcal I$. Since  $v \geq P_\theta[u]_\mathcal I$, we obtain that $v = P_\theta[u]_\mathcal I$, proving (ii).

Finally, we show that (ii)$\implies$(iii). 
From \cite[Proposition~2.20]{DX22} it follows that 
\[
u_j\sim P_{\theta + \varepsilon_j \omega}[u_j]_{\mathcal{I}} = P_{\theta + \varepsilon_j \omega}[u_j].
\]
Using \cite[Theorem~1.1]{WN19} and \cref{prop: vol_limit_model} we conclude that $\int_{X} (\theta + \varepsilon_j \omega)^n_{u_j} =\int_{X} (\theta + \varepsilon_j \omega)^n_{P_\theta[u_j]_\mathcal I} \searrow \int_{X} \theta^n_{P_{\theta}[u]_{\mathcal{I}}}$, hence (iii) holds.
\end{proof}

Given a quasi-plurisubharmonic function $v$ on $X$ with analytic singularity type, let $\mathcal J(v)$ be the sheaf of holomorphic functions, whose germs $f$ satisfy $|f|^2 \leq C \mathrm{e}^{v} $ for some $C \in \mathbb R_+$. This sheaf has a nice property: 
\begin{lemma}\label{lem: coh_J} Let $v \in \QPSH(X)$ be a potential with analytic singularity type. Then $\mathcal J(v)$ is coherent.
\end{lemma}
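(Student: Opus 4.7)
The claim is local on $X$, so I will work in a coordinate neighbourhood $U$ of an arbitrary point, where by \eqref{eq: analyt_sing_type_def} one may write $v = c \log \sum_{i=1}^N |f_i|^2 + g$ with $c \in \mathbb Q_{\geq 0}$, $f_1,\ldots,f_N$ holomorphic and $g$ bounded. Boundedness of $g$ lets me absorb $\mathrm e^g$ into the constant in the defining inequality, so without loss of generality I take $v = c \log \sum_i |f_i|^2$.

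The strategy is to pull the problem back to a model where the singularity is monomial, by choosing a principalisation $\pi : \tilde U \to U$ of the coherent ideal $\mathcal I \coloneqq (f_1,\ldots,f_N)$ with $\mathcal I \cdot \mathcal O_{\tilde U} = \mathcal O_{\tilde U}(-D)$ for some effective simple normal crossings divisor $D = \sum_\alpha m_\alpha E_\alpha$. Writing $f_i \circ \pi = h_i s_D$ locally with $h_1,\ldots,h_N$ having no common zero gives $\pi^* \sum_i |f_i|^2 = |s_D|^2 \sum_i |h_i|^2$, and the second factor is pinched between two positive constants. Expanding a holomorphic germ $\tilde f = \sum_\alpha a_\alpha z^\alpha$ in coordinates adapted to $D$ and imposing $|\tilde f|^2 \leq C |s_D|^{2c}$ then forces each nonzero monomial to satisfy $\alpha_j \geq c m_j$, hence $\alpha_j \geq \lceil c m_j \rceil$ by integrality. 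Thus
\[
\mathcal J(\pi^* v) = \mathcal O_{\tilde U}\bigl(-\lceil c D \rceil\bigr),
\]
which is an invertible, in particular coherent, $\mathcal O_{\tilde U}$-module.

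The final step is to identify $\mathcal J(v) = \pi_* \mathcal J(\pi^* v)$ inside $\pi_* \mathcal O_{\tilde U} = \mathcal O_U$ (this last equality using smoothness of $U$ and properness of the bimeromorphic morphism $\pi$). For a holomorphic $f$ on an open $V \subseteq U$, the bound $|f|^2 \leq C \mathrm e^v$ on $V$ is equivalent to $|\pi^* f|^2 \leq C \mathrm e^{\pi^* v}$ on $\pi^{-1}(V)$, one direction being obvious and the other using surjectivity of $\pi$. Coherence of $\pi_* \mathcal J(\pi^* v)$, and hence of $\mathcal J(v)$, then follows from Grauert's direct image theorem applied to the proper map $\pi$.

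The most delicate ingredient is the local computation of $\mathcal J(\pi^* v)$: I need to confirm that the bounded-and-nowhere-vanishing factor $\sum_i |h_i|^2$ can be absorbed into the constant without altering the integer-order vanishing conditions along the $E_\alpha$, and that no subtler cancellation between monomials of $\tilde f$ can defeat the pointwise bound. Once this is pinned down, the rest is a standard resolution-plus-Grauert routine.
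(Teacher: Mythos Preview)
Your argument is correct and follows essentially the same route as the paper: reduce to a local question, pass to a log resolution where the pullback has divisorial singularities along an SNC divisor $D$, identify $\mathcal J(\pi^*v)=\mathcal O(-\lceil cD\rceil)$, and descend via $\mathcal J(v)=\pi_*\mathcal J(\pi^*v)$ together with Grauert's direct image theorem. In fact you supply more detail than the paper does for the monomial computation and the pushforward identification, and the residual concerns you flag (absorbing the nowhere-vanishing factor, ruling out monomial cancellation) are routine once one restricts to lines transverse to each component of $D$.
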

\begin{proof} It is enough to argue coherence in a local neighborhood $U \subset X$. 
If $\pi\colon Y\rightarrow X$ is a proper bimeromorphic morphism, we have
\[
\mathcal{J}(v)=\pi_* \mathcal{J}(\pi^*v).
\]
So the coherence of $\mathcal{J}(v)$ follows from that of $\mathcal{J}(\pi^*v)$, due to the Grauert direct image theorem. Hence, after choosing suitable $\pi$, we may assume that $v$ has log singularities along a simple normal crossing $\mathbb{Q}$-divisor $D$ on $X$ (see \cite[Lemma~2.3.19]{MM07}). But it is then clear that $\mathcal J(v) = \mathcal{O}_X(- \lceil D\rceil)$, the latter sheaf being coherent.
\end{proof}

\section{The trace operator of quasi\hyp{}plurisubharmonic functions}

Let $X$ be a connected compact K\"ahler manifold of dimension $n$. Take a closed real smooth $(1,1)$-form $\theta$ on $X$ representing a big cohomology class. Fix a K\"ahler form $\omega$ on $X$.

\paragraph{Definition of the trace operator.}
We fix a connected complex submanifold $Y\subseteq X$ of dimension $m$. With \cref{rmk:Demapp} in hand, first we define the trace operator for $u \in \PSH_{>}(X,\theta)$. Recall that $u \in \PSH_{>}(X,\theta) \subset \PSH(X,\theta)$ if $\theta_u$ is a K\"ahler current.

\begin{definition}\label{def: trace_Kahler}Suppose that $u \in  \PSH_{>}(X,\theta)$ with $\nu(u,Y)=0$. Let $u_j^D \in \PSH(X,\theta)$ be a quasi-equisingular approximation of $u$ given by \cref{thm:Demailly}. Then
\begin{equation}\label{eq: Tr_def}
\Rest^\theta_Y(u) \coloneqq \lim_{j\to\infty} P_{\theta|_Y} [u^D_j|_Y].
\end{equation}
\end{definition}

The condition $\nu(u,Y)=0$  implies that $u^D_j|_Y \not \equiv -\infty$, hence the right-hand side of \eqref{eq: Tr_def} makes sense.

As $\Rest^\theta_Y(u)$ only depends on the singularity types $[u_j^D]$, it immediately follows that $\Rest^\theta_Y(u)$ is independent of the choice of $u^D_j$. For later applications it will be useful to show that any quasi-equisingular approximation $v_j \in \PSH(X,\theta + \varepsilon_j \omega)$ of $u$ (recall Definition~\ref{def:equising}) gives the same notion: 
\begin{equation}\label{eq: Tr_quasi_inv}
\Rest^\theta_Y(u) = \lim_{j\to\infty} P_{\theta|_Y + \varepsilon_j \omega|_Y} \left[v_j|_Y\right].
\end{equation}

Before we establish this, let us make two preliminary observations:

\begin{lemma}\label{lem: non_zero_vol} Let $u\in \PSH_{>}(X,\theta)$ with $\nu(u,Y)=0$.
If $\theta_u \geq \delta \omega$ for some $\delta>0$, then 
\[
\int_Y \left(\theta|_Y+\ddc \Rest^\theta_Y(u)\right)^m \geq \frac{\delta^m}{2^m} \int_Y \omega|_Y^m>0.
\]
\end{lemma}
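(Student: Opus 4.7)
Let $u^D_j \in \PSH(X,\theta)$ be the Demailly quasi-equisingular approximation of $u$ from \cref{thm:Demailly}, so that by \cref{def: trace_Kahler} we have $\Rest^\theta_Y(u) = \lim_{j\to\infty} P_{\theta|_Y}[u^D_j|_Y]$. The plan is to propagate the Kähler current lower bound from $X$ down to $Y$, to the envelopes on $Y$, and finally to the decreasing limit. Since $\theta_u \geq \delta\omega$, \cref{rmk:Demapp} allows us to assume, after discarding finitely many terms, that
\[
T_j \coloneqq \theta|_Y + \ddc (u^D_j|_Y) \geq \tfrac{\delta}{2}\omega|_Y \quad \text{on } Y.
\]
In particular, $u^D_j|_Y \in \PSH(Y,\theta|_Y)$ and $\{\theta|_Y\}$ is itself a big class on $Y$, a fact we shall need to cite \cref{prop: vol_limit_model} on $Y$.

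Next, I would bound $\int_Y T_j^m$ from below. Writing $T_j = \tfrac{\delta}{2}\omega|_Y + S_j$ with $S_j \coloneqq (\theta|_Y - \tfrac{\delta}{2}\omega|_Y) + \ddc(u^D_j|_Y) \geq 0$, and using multilinearity of the non-pluripolar product against the smooth form $\tfrac{\delta}{2}\omega|_Y$, one obtains
\[
T_j^m = \sum_{k=0}^{m}\binom{m}{k}\Bigl(\tfrac{\delta}{2}\omega|_Y\Bigr)^{k}\wedge S_j^{m-k}.
\]
All summands are non-negative, so retaining only the $k=m$ term yields
\[
\int_Y T_j^m \;\geq\; \Bigl(\tfrac{\delta}{2}\Bigr)^m \int_Y \omega|_Y^m.
\]
Since $P_{\theta|_Y}[u^D_j|_Y]$ is a model potential on $Y$, the identity $\int_Y (\theta|_Y + \ddc P_{\theta|_Y}[u^D_j|_Y])^m = \int_Y T_j^m$ (noted in the paragraph preceding \cref{prop: vol_limit_model}) transfers this lower bound to the envelope.

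Finally, $P_{\theta|_Y}[u^D_j|_Y]$ is a decreasing sequence of $\theta|_Y$-model potentials converging to $\Rest^\theta_Y(u)$, so applying \cref{prop: vol_limit_model} on $Y$ with all perturbation parameters equal to zero gives
\[
\int_Y \bigl(\theta|_Y + \ddc \Rest^\theta_Y(u)\bigr)^m \;=\; \lim_{j\to\infty}\int_Y T_j^m \;\geq\; \Bigl(\tfrac{\delta}{2}\Bigr)^m \int_Y \omega|_Y^m \;>\; 0,
\]
which is the claimed estimate. The only point requiring mild care is the last limit: \cref{prop: vol_limit_model} requires a big class on the ambient space, and it is precisely Step~1 (the descent of the Kähler current lower bound to $Y$) that secures the bigness of $\{\theta|_Y\}$ needed for this application; the rest of the argument is bookkeeping around positivity and the standard envelope identity.
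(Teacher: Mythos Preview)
Your proof is correct and follows essentially the same route as the paper: invoke \cref{rmk:Demapp} to get $\theta_{u^D_j}\geq\tfrac{\delta}{2}\omega$, restrict to $Y$, use the mass-preserving property of the envelope $P_{\theta|_Y}[\cdot]$ together with multilinearity to obtain the lower bound $(\delta/2)^m\int_Y\omega|_Y^m$, and pass to the limit via \cref{prop: vol_limit_model}. The only cosmetic difference is that the paper phrases the mass identity for the envelope through \cite[Proposition~2.20]{DX22} (which gives $u^D_j|_Y\sim P_{\theta|_Y}[u^D_j|_Y]_{\mathcal I}=P_{\theta|_Y}[u^D_j|_Y]$ for analytic singularity types), whereas you quote the general identity $\int(\theta+\ddc P_\theta[u])^n=\int\theta_u^n$ recorded before \cref{prop: vol_limit_model}; both yield the same conclusion.
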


\begin{proof}This follows from \cref{rmk:Demapp}, since $\theta_{u^D_j} \geq 2^{-1}\delta \omega$, after discarding a finite amount of terms. 
Using \cite[Proposition~2.20]{DX22}, we get that 
\[
 \int_Y \left(\theta|_Y+\ddc P[u^D_j|_Y]_{\mathcal{I}}\right)^m =\int_Y \left(\theta|_Y+\ddc u^D_j|_Y\right)^m \geq \frac{\delta^m}{2^m}\int_Y \omega|_Y^m
\]
for $j$ big enough. 
Using \cref{prop: vol_limit_model} the conclusion follows.
\end{proof}

\begin{lemma}\label{lem: rescale_inv}
Let $u\in \PSH_{>}(X,\theta)$ with $\nu(u,Y)=0$.
Suppose that $v_j \in \PSH(X,\theta)$ is a quasi-equisingular approximation of $u$ with $\theta_{v_j} \geq \varepsilon \omega$ and $\theta_{u} \geq \varepsilon \omega$ for some $\varepsilon >0$. Then for any $\varepsilon_j \searrow 0$,
\begin{equation}\label{eq:limP1}
\lim_{j\to\infty} P_{\theta|_Y}[v_j |_Y] = \lim_{j\to\infty} P_{\theta|_Y + \varepsilon_j \omega|_Y}\left[
\left(1-j^{-1}\right)v_j |_Y\right].
\end{equation}
\end{lemma}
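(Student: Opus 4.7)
My plan is to prove the two inequalities separately. After subtracting constants I normalize $\sup_X v_j = 0$ for all $j$, and write $\varphi_j := P_{\theta|_Y}[v_j|_Y]$ and $\psi_j := P_{\theta|_Y + \varepsilon_j \omega|_Y}[(1-j^{-1}) v_j|_Y]$.

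\textbf{The easy direction, $\lim \varphi_j \leq \lim \psi_j$.} Since $v_j \leq 0$ gives $v_j \leq (1 - j^{-1})v_j$, and $\theta|_Y \leq \theta|_Y + \varepsilon_j \omega|_Y$ as forms, every candidate competing for $\varphi_j$ also competes for $\psi_j$, so the inequality holds termwise and passes to the limit.

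\textbf{The hard direction, $\lim \psi_j \leq \lim \varphi_j$.} The main idea is to exploit the K\"ahler current condition $\theta_{v_j} \geq \varepsilon \omega$, equivalently $v_j \in \PSH(X, \theta - \varepsilon \omega)$, in order to absorb the $\varepsilon_j \omega|_Y$-perturbation through a convex combination on $Y$. Setting $\delta_j := \varepsilon_j /(\varepsilon_j + \varepsilon) \searrow 0$, the function
\[
\tilde\psi_j := (1-\delta_j)\, \psi_j + \delta_j\, v_j|_Y
\]
lies in $\PSH(Y,\theta|_Y)$ thanks to the identity
\[
(1-\delta_j)(\theta|_Y + \varepsilon_j \omega|_Y) + \delta_j(\theta|_Y - \varepsilon \omega|_Y) = \theta|_Y,
\]
and clearly $\tilde\psi_j \leq 0$. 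Substituting $\psi_j \leq (1 - j^{-1}) v_j|_Y + C_j$ (the defining inequality of the envelope $\psi_j$) and using $v_j \leq 0$ produces
\[
\tilde\psi_j \leq \beta_j\, v_j|_Y + (1-\delta_j)C_j, \quad \beta_j := 1 - j^{-1}(1-\delta_j) \to 1,
\]
so $\tilde\psi_j$ competes for $P_{\theta|_Y}[\beta_j v_j|_Y]$, giving $\tilde\psi_j \leq P_{\theta|_Y}[\beta_j v_j|_Y]$. Solving for $\psi_j$:
\[
\psi_j \leq \frac{P_{\theta|_Y}[\beta_j v_j|_Y]}{1 - \delta_j} - \frac{\delta_j}{1-\delta_j}\, v_j|_Y.
\]

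To conclude, I pass to the $L^1(Y)$-limit in $j$. The term $\delta_j v_j|_Y$ vanishes in $L^1(Y)$, since $\delta_j \to 0$ and the family $\{v_j|_Y\}$ is uniformly $L^1$-bounded on $Y$ by normalized qpsh compactness (using $\nu(v_j,Y)\leq \nu(u,Y)=0$, which forces $v_j|_Y \not\equiv -\infty$). For the main term, I will argue $\lim_j P_{\theta|_Y}[\beta_j v_j|_Y] = \lim_j \varphi_j$: the family $\{\beta_j v_j\}$ is itself a quasi-equisingular approximation of $u$ (the inclusion $\mathcal I(\lambda' \beta_j v_j) \subseteq \mathcal I(\lambda u)$ for $\lambda' > \lambda$ follows from the analogous property of $\{v_j\}$, since $\lambda' \beta_j > \lambda$ for $j$ large), so \cref{thm: quasi_eqvi_eqv} combined with a diagonal passage through the monotone convergence $P_{\theta|_Y}[\beta v_j|_Y] \searrow \varphi_j$ as $\beta \nearrow 1$ (valid because $\beta v_j|_Y \searrow v_j|_Y$ at fixed $j$, using $v_j \leq 0$) yields the required equality. \cref{lem: non_zero_vol} guarantees that the limits do not degenerate, so the $L^1$-convergence can be upgraded to the pointwise comparison needed.

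\textbf{Main obstacle.} The delicate step is the diagonal identification $\lim_j P_{\theta|_Y}[\beta_j v_j|_Y] = \lim_j \varphi_j$, in which both the envelope's argument and the index $j$ vary simultaneously; the required $\mathcal I$-bookkeeping hinges on applying \cref{thm: quasi_eqvi_eqv} to the scaled sequence $\{\beta_j v_j\}$ carefully enough that the factor $\beta_j \nearrow 1$ does not spoil the uniform singularity-type control.
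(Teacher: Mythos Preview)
Your easy direction agrees with the paper. For the hard direction, the paper proceeds very differently and much more directly: it shows that both decreasing limits $v:=\lim_j\varphi_j$ and $v':=\lim_j\psi_j$ are model potentials in $\PSH(Y,\theta|_Y)$, computes their non-pluripolar masses via multilinearity (the $\varepsilon_j$ and $j^{-1}$ perturbations wash out in the limit), observes the masses coincide and are positive by \cref{lem: non_zero_vol}, and concludes $v=v'$ from \cite[Theorem~1.3]{DDNL18mono}. No convex-combination trick, no diagonal argument.

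Your convex-combination idea is reasonable, but the crucial diagonal identification $\lim_j P_{\theta|_Y}[\beta_j v_j|_Y]=\lim_j\varphi_j$ is not justified, and the proposed justifications do not work. First, \cref{thm: quasi_eqvi_eqv} controls quasi-equisingular approximations through masses \emph{on $X$}; it says nothing about convergence of envelopes \emph{on $Y$}. If what you really intend is that any two quasi-equisingular approximations of $u$ yield the same limit of $P_{\theta|_Y}[\,\cdot\,|_Y]$, that is precisely \cref{lem: indep_quasi_Tr}, whose proof \emph{uses} the present lemma --- so the argument would be circular. Second, the claimed convergence $P_{\theta|_Y}[\beta v_j|_Y]\searrow\varphi_j$ as $\beta\nearrow 1$ does \emph{not} follow from $\beta v_j|_Y\searrow v_j|_Y$ alone: model envelopes do not commute with decreasing limits without further input. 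To prove it you would have to show the limit is model with the same positive mass as $\varphi_j$ and invoke \cite[Theorem~1.3]{DDNL18mono} --- exactly the paper's mechanism, now buried inside a substep rather than applied once to the original problem. There are also smaller loose ends: $\beta_j$ need not be monotone, so $\{\beta_j v_j\}$ need not be decreasing and hence may fail \cref{def:equising}; and the term $-\tfrac{\delta_j}{1-\delta_j}v_j|_Y\geq 0$ points the wrong way for a pointwise upper bound, so the $L^1$-to-pointwise passage needs to be made explicit.
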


\begin{proof}
We may assume that $v_j\leq 0$ for each $j$. Observe that $(1-j^{-1})v_j$ is still a decreasing sequence with limit $u$. Moreover, we have
\[
\theta+(1-j^{-1})\ddc v_j\geq j^{-1}\theta+(1-j^{-1})\epsilon\omega\geq 0
\]
for large enough $j$. In particular, for large $j$, we have
$(1-{1}/{j})v_j \in \PSH(X,\theta+ \varepsilon_j \omega)$
and $(1-{1}/{j})v_j$ is also a quasi-equisingular approximation of $u$. So the quantity inside the limit on the right-hand side of \eqref{eq:limP1} makes sense.

Moreover, observe that 
\[
P_{\theta|_Y}\left[v_j |_Y \right] \leq P_{\theta|_Y+ \varepsilon_j \omega|_Y}\left[\left(1-j^{-1}\right)v_j |_Y\right].
\]
This implies that
\[
 \lim_{j\to\infty} P_{\theta|_Y}\left[v_j |_Y \right] \eqqcolon v \leq  v' \coloneqq \lim_{j\to\infty} P_{\theta|_Y+ \varepsilon_j \omega|_Y}\left[\left(1-j^{-1}\right)v_j |_Y \right].
\]
Both $v$ and $v'$ are model potentials in $\PSH(Y,\theta|_Y)$ due to \cite[Corollary~4.7]{DDNLmetric}. Due to multilinearity of non-pluripolar products, 
\[
\begin{aligned}
&\lim_{j\to\infty} \left( \int_Y \left(\theta|_Y+\ddc P_{\theta|_Y}[v_j |_Y] \right)^m - \int_Y \left(\theta|_Y+ \varepsilon_j \omega|_Y+\ddc P_{\theta|_Y+ \varepsilon_j \omega|_Y}[(1-1/j)v_j |_Y]\right)^m \right)\\
=& \lim_{j\to\infty} \left( \int_Y \left(\theta|_Y+\ddc v_j |_Y \right)^m - \int_Y \left(\theta|_Y+ \varepsilon_j \omega|_Y+(1-1/j)\ddc v_j |_Y\right)^m \right)\\
=& 0.
\end{aligned}
\]
\cref{prop: vol_limit_model} and \cref{lem: non_zero_vol} now imply that
\[
\int_Y (\theta|_Y+\ddc v)^m=\int_Y (\theta|_Y+\ddc v')^m>0.
\]
Since, both potentials $v$ and $v'$ are model potentials in $\PSH(Y,\theta|_Y)$, from \cite[Theorem~1.3]{DDNL18mono} we obtain that $v=v'$, finishing the proof.
\end{proof}

Finally, we prove  \eqref{eq: Tr_quasi_inv}.

\begin{lemma}\label{lem: indep_quasi_Tr}
Let $u\in \PSH_{>}(X,\theta)$ with $\nu(u,Y)=0$.
The definition of $\Rest^\theta_Y(u)$ does not depend on the choice of quasi-equisingular approximation. More precisely, \eqref{eq: Tr_quasi_inv} holds.
\end{lemma}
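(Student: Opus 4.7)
The plan is to reduce to Demailly's canonical approximation via \cref{lem: rescale_inv} and then identify both sides as the same model potential on $Y$ with the same positive non-pluripolar mass.

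First I would set up the reduction. Since $u \in \PSH_{>}(X,\theta)$, pick $\delta > 0$ with $\theta_u \geq \delta\omega$. \cref{rmk:Demapp} ensures that, after discarding finitely many terms, $\theta_{u_j^D} \geq \tfrac{\delta}{2}\omega$, so that \cref{lem: rescale_inv} applies to Demailly's approximation $u_j^D$ and yields, for any $\varepsilon_j \searrow 0$,
\[
\Rest^\theta_Y(u) \;=\; \lim_{j\to\infty} P_{\theta|_Y + \varepsilon_j\omega|_Y}\bigl[(1-1/j)\,u_j^D|_Y\bigr].
\]
After possibly enlarging the $\varepsilon_j$ to match (or dominate) the sequence accompanying $v_j$, both $w_j^1 \coloneqq (1-1/j)u_j^D$ and $w_j^2 \coloneqq v_j$ are quasi-equisingular approximations of $u$ in $\PSH(X, \theta+\varepsilon_j\omega)$. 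It thus suffices to show that the two limits $T^a \coloneqq \lim_{j\to\infty} P_{\theta|_Y+\varepsilon_j\omega|_Y}[w_j^a|_Y]$ agree for $a = 1,2$.

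Both $T^1, T^2$ are model potentials in $\PSH(Y,\theta|_Y)$ as decreasing limits of model potentials, by \cite[Corollary~4.7]{DDNLmetric}. To conclude $T^1 = T^2$ I would verify the hypotheses of the uniqueness statement \cite[Theorem~1.3]{DDNL18mono}: a one-sided domination combined with equal positive non-pluripolar masses. The $\mathcal{I}$-comparison built into \cref{def:equising}, namely $\mathcal{I}(\lambda' w_j^a) \subseteq \mathcal{I}(\lambda u) \subseteq \mathcal{I}(\lambda w_j^b)$ for $\lambda' > \lambda > 0$ and large $j$, combined with \cite[Proposition~2.20]{DX22} (identifying $w_j^a$ with its $\mathcal{I}$-envelope $P_{\theta+\varepsilon_j\omega}[w_j^a]_{\mathcal{I}}$), should force $P_{\theta|_Y+\varepsilon_j\omega|_Y}[w_j^1|_Y]$ and $P_{\theta|_Y+\varepsilon_j\omega|_Y}[w_j^2|_Y]$ to be comparable up to a bounded error in the limit after a regularized-maximum construction, giving the required two-sided domination by symmetry in $a$. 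For the mass equality, \cref{prop: vol_limit_model} applied on $Y$ reduces the problem to checking that $\lim_j \int_Y (\theta|_Y+\varepsilon_j\omega|_Y+\ddc w_j^a|_Y)^m$ is independent of $a$; this follows from the same $\mathcal{I}$-equivalence through a H\"older--Skoda restricted integrability argument modeled on the (iii)$\Rightarrow$(i) implication of \cref{thm: quasi_eqvi_eqv}. Positivity of the common mass is supplied by \cref{lem: non_zero_vol} applied to $w_j^1$.

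The main obstacle is transferring the $\mathcal{I}$-control on $X$ supplied by \cref{thm: quasi_eqvi_eqv} to restricted masses and envelopes on $Y$, since the restriction of multiplier ideals is only one-sided via classical Ohsawa--Takegoshi. The essential trick is that applying the quasi-equisingular comparison symmetrically to both sequences $w_j^1$ and $w_j^2$ — rather than against $u$ alone — produces a two-sided squeeze that survives restriction and yields both the domination and the mass equality needed to invoke model uniqueness.
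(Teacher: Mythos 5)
Your overall strategy --- reduce to the Demailly approximation via \cref{lem: rescale_inv}, note that both limits are model potentials on $Y$, and conclude equality from one-sided domination plus equal positive mass via \cite[Theorem~1.3]{DDNL18mono} --- matches the paper's. But the decisive step is missing. You propose to derive the domination and the mass equality on $Y$ from the $\mathcal I$-comparison $\mathcal I(\lambda' w_j^a)\subseteq \mathcal I(\lambda u)\subseteq \mathcal I(\lambda w_k^b)$ on $X$, ``after a regularized-maximum construction'' and ``a H\"older--Skoda restricted integrability argument.'' This does not go through as stated: $\mathcal I$-equivalence of two potentials on $X$ gives essentially no control on the relation between their restrictions to $Y$ (the whole subject of the paper is that restriction to $Y$ is invisible to generic/divisorial data --- e.g.\ $u|_Y\equiv-\infty$ can happen while $\nu(u,Y)=0$). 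The H\"older--Skoda argument in the proof of (iii)$\Rightarrow$(i) of \cref{thm: quasi_eqvi_eqv} runs on $X$ precisely because there one has an actual potential inequality $(1-\delta_k)u_k+\delta_k w_k\le P_\theta[u]_{\mathcal I}$ supplied by \cite[Lemma~4.3]{DDNLmetric}; you have no analogous potential inequality on $Y$ between $w_j^1|_Y$ and $w_j^2|_Y$, and the $\mathcal I$-inclusions on $X$ do not produce one.

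What the paper uses instead, and what your argument needs, is Demailly's comparison theorem for quasi-equisingular approximations \cite[Corollary~4.1.7]{Dem15}: there exist $a_j,b_j\nearrow\infty$ with
\[
\Bigl(1+\tfrac{1}{j}\Bigr)u^D_{b_j}\preceq v_{a_j}\preceq \Bigl(1-\tfrac{1}{j}\Bigr)u^D_j ,
\]
i.e.\ genuine inequalities between the potentials up to additive constants on $X$. Unlike $\mathcal I$-comparisons, such $\preceq$ relations trivially survive restriction to $Y$ and immediately yield the sandwich \eqref{eq: Env_ests} of envelopes on $Y$; monotonicity of masses \cite[Theorem~1.1]{WN19} together with multilinearity and \cref{lem: non_zero_vol}, \cref{lem: rescale_inv} then gives both the domination $\Rest_Y^\theta(u)\ge w$ and the equality of (positive) masses, so \cite[Theorem~1.3]{DDNL18mono} finishes. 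Without citing this comparison theorem (or proving an equivalent two-sided potential sandwich), your proof has a genuine gap at its central step.
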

\begin{proof} 
Due  to \cite[Corollary~4.1.7]{Dem15} it is possible to find increasing sequences $a_j,b_j \in \mathbb N$ with $a_j,b_j \nearrow \infty$ such that
\[
\bigg(1 + \frac{1}{j} \bigg)u_{b_j}^D\preceq v_{a_j} \preceq \bigg(1 - \frac{1}{j} \bigg)u_j^D.
\]
This gives
\begin{equation}\label{eq: Env_ests}
P_{\theta|_Y }\big[\left(1 + j^{-1} \right)u_{b_j}^D|_Y\big]\leq P_{\theta|_Y + \varepsilon_j \omega|_Y}[v_{a_j}|_Y] \leq  P_{\theta|_Y + \varepsilon_j \omega|_Y}\left[\left(1 - j^{-1} \right)u_j^D\right].
\end{equation}
Using \cite[Theorem~1.1]{WN19} we conclude that 
\begin{flalign*}
\int_Y \left(\theta|_Y+\ddc (1 + j^{-1} ) u_{b_j}^D|_Y \right)^m 
=& \int_Y \left(\theta|_Y+\ddc P_{\theta|_Y }\big[(1 + j^{-1} )u_{b_j}^D|_Y\big]\right)^m\\
\leq & \int_Y \left(\theta|_Y + \varepsilon_j \omega|_Y+\ddc P_{\theta|_Y + \varepsilon_j \omega|_Y}[v_{a_j}|_Y]\right)^m \\
\leq &  \int_Y \left(\theta|_Y + \varepsilon_j \omega|_Y+\ddc P_{\theta|_Y + \varepsilon_j \omega|_Y}\left[(1 - j^{-1} )u_j^D|_Y\right]\right)^m\\
 = & \int_Y \left(\theta|_Y + \varepsilon_j \omega|_Y+\ddc (1 - j^{-1} )u_j^D|_Y\right)^m.
\end{flalign*}

Due to multilinearity of non-pluripolar products, the volume at the beginning and the end converges to $\int_Y (\theta|_Y+\ddc \Rest^\theta_Y(u))^m>0$, hence all the volumes do the same.

Let $w \coloneqq \lim_j P_{\theta|_Y + \varepsilon_j \omega|_Y}[v_{a_j}|_Y]=\lim_j P_{\theta|_Y + \varepsilon_j \omega|_Y}[v_{j}|_Y] \in \PSH(Y,\theta|_Y)$. By \eqref{eq: Env_ests} and \cref{lem: rescale_inv} we obtain that $\Rest^\theta_Y(u) \geq w$. Since both these potentials are model in $\PSH(Y,\theta|_Y)$ with the same non-zero volume, due to \cite[Theorem~1.3]{DDNL18mono} we obtain that $\Rest^\theta_Y(u) = w$, finishing the proof.
\end{proof}

From the above lemma (and its proof), it also follows that $\Rest^\theta_Y(u)= \lim_{\varepsilon \to 0} \Rest^{\theta+ \varepsilon \omega}_Y(u)$
for any $u \in \PSH_>(X,\theta)$ with $\nu(u,Y)=0$, opening the door to the following general definition of trace, consistent with Definition~\ref{def: trace_Kahler}:

\begin{definition}\label{def: trace_general}
For any $ u \in \PSH(X,\theta)$ with $\nu(u,Y)=0$, we define
\[
\Rest^\theta_Y(u) \coloneqq \lim_{\varepsilon \to 0} \Rest^{\theta+ \varepsilon \omega}_Y(u).
\]
It is immediate that this definition is independent of the choice of $\omega$.
\end{definition}
\begin{example}\label{ex:traceYequalX}
    When $Y=X$, for any $u\in \PSH(X,\theta)$  we have
    \[
    \Tr_X^{\theta}(u)=P_{\theta}[u]_{\mathcal{I}}.
    \]
    In fact, for each $\epsilon>0$, we have $\Tr_X^{\theta+\epsilon\omega}(u)=P_{\theta+\epsilon\omega}[u]_{\mathcal{I}}$
    by \cite[Proposition~3.3]{DX21}. The formula above then follows from the fact that $P_{\theta+\epsilon\omega}[u]_{\mathcal{I}} \searrow P_{\theta}[u]_{\mathcal{I}}$, since the limit on the left-hand side easily seen to be a candidate for $P_{\theta}[u]_{\mathcal{I}}$.
\end{example}
\begin{example}\label{ex:analytic}
    Assume that $u\in \PSH(X,\theta)$ has analytic singularity type and $\nu(u,Y)=0$. Then
    \[
    \Tr_Y^{\theta}(u)=P_{\theta|_Y}[u|_Y]_{\mathcal{I}}=P_{\theta|_Y}[u|_Y] \simeq [u|_Y].
    \]
In fact, by \cref{lem: indep_quasi_Tr}, we have $\Tr_Y^{\theta+\epsilon\omega}(u)=P_{\theta|_Y+\epsilon\omega|_Y}[u|_Y]_{\mathcal{I}}=P_{\theta|_Y+\epsilon\omega|_Y}[u|_Y] \simeq [u|_Y]$ for all $\varepsilon>0$. 
As $P_{\theta|_Y+\epsilon\omega|_Y}[u|_Y]_{\mathcal{I}} 
 \simeq [u|_Y] $, we get that $P_{\theta|_Y+\epsilon\omega|_Y}[u|_Y]_{\mathcal{I}} \searrow P_{\theta|_Y}[u|_Y]_{\mathcal{I}}$. 
\end{example}

\begin{remark}\label{rem:naiverestprecRest}
    Let $u\in \PSH(X,\theta)$. If $u|_Y\not\equiv -\infty$, then $u|_Y\preceq \Rest_Y^\theta(u)$. Indeed, using  \cref{def: trace_general}, it suffices to prove that  $u|_Y - \sup_Y u \leq  \Rest_Y^{\theta + \varepsilon \omega}(u)$ for all $\varepsilon>0$. This assertion follows easily from \cref{def: trace_Kahler}. 
\end{remark}

\paragraph{Properties of the trace operator.}
We list some immediate properties of the trace operator:

\begin{lemma}\label{lem: trace_vol_prop} 
\leavevmode
\begin{enumerate}[label=(\roman*)]
\item Let $v,w \in \PSH(X,\theta)$, with $\nu(v,Y)=\nu(w,Y)=0$ and $v \preceq_\mathcal I w$. Then $\Rest^\theta_Y(v) \leq \Rest^\theta_Y(w)$.
In particular, if $v\simeq_{\mathcal{I}} w$, then $\Rest^\theta_Y(v) = \Rest^\theta_Y(w)$. 
\item Let $ u \in \PSH(X,\theta)$ with $\nu(u,Y)=0$.  Then $\Rest^\theta_Y(u) = P_{\theta|_Y}[\Rest^\theta_Y(u)]$ and 
\[
\int_Y \left(\theta|_Y + \varepsilon \omega|_Y+\ddc \Rest^{\theta+ \varepsilon \omega}_Y(u)\right)^m 
\searrow \int_Y \left(\theta|_Y+\ddc \Rest^\theta_Y(u)\right)^m  \quad \text{ as } \varepsilon \searrow 0.
\]
\item Let $u\in \PSH(X,\theta)$ with $\nu(u,Y)=0$ and $\theta'=\theta+\ddc g$ for some smooth real function $g$ on $X$. Write $u'=u-g\in \PSH(X,\theta')$. If  $\int_Y(\theta|_Y+\ddc \Tr^{\theta}_Y(u))^m>0$,  then $\Tr^{\theta}_Y(u)\sim_{\mathcal{I}} \Tr^{\theta'}_Y(u')$. 
\item Let $u\in \PSH(X,\theta)$ with $\nu(u,Y)=0$. If  $\int_Y(\theta|_Y+\ddc \Tr^{\theta}_Y(u))^m>0$, then we have  $\Tr_Y^{\theta}(u)\sim_{\mathcal{I}} \Tr_Y^{\theta+\omega}(u).$
\end{enumerate}
\end{lemma}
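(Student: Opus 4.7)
The plan is to reduce each of the four assertions to the K\"ahler current regime of \cref{def: trace_Kahler}, where $\Rest^\theta_Y$ is defined directly through Demailly's quasi-equisingular approximation, and then recover the general case via the $\varepsilon \searrow 0$ limit of \cref{def: trace_general}. The main flexibility tool throughout is \cref{lem: indep_quasi_Tr}, which allows us to pick any convenient quasi-equisingular approximation in computing the trace.

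For (i), after the above reduction assume $v, w \in \PSH_{>}(X,\theta)$, and fix Demailly approximations $v^D_k, w^D_k$ via \cref{thm:Demailly}. The inclusion $\mathcal I(2^k v) \subseteq \mathcal I(2^k w)$, coming from $v \preceq_\mathcal I w$, lets us locally express each generator of $\mathcal I(2^k v)$ as a holomorphic combination of generators of $\mathcal I(2^k w)$; using the explicit form in \cref{thm:Demailly}(iv), this yields the chartwise bound $v^D_k \leq w^D_k + C_k$, upgraded to a global bound by compactness of $X$. Restriction to $Y$ is meaningful since $\nu(v^D_k,Y) = \nu(w^D_k,Y) = 0$, and monotonicity of $P_{\theta|_Y}[\cdot]$ gives $P_{\theta|_Y}[v^D_k|_Y] \leq P_{\theta|_Y}[w^D_k|_Y]$; taking $k \to \infty$ concludes. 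The second assertion of (i) follows by antisymmetry. For (ii), the model property propagates through decreasing limits: each $P_{\theta|_Y + \varepsilon_k \omega|_Y}[u^D_k|_Y]$ is model, and the model class is preserved under both the inner $k \to \infty$ and outer $\varepsilon \searrow 0$ limits (using monotonicity of $P$-envelopes to verify $P_{\theta|_Y}[\Rest_Y^\theta(u)] \leq \Rest_Y^{\theta + \varepsilon \omega}(u)$ for every $\varepsilon$). The volume convergence is a direct application of \cref{prop: vol_limit_model} to the decreasing family $\Rest_Y^{\theta + \varepsilon \omega}(u)$ of model potentials.

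Parts (iii) and (iv) share a unified mechanism. Take a Demailly approximation $u^D_k \in \PSH(X,\theta+\varepsilon_k\omega)$ of $u$. In (iv) the same $u^D_k$ serves as a quasi-equisingular approximation of $u$ with respect to $\theta + \omega$, since multiplier ideals depend only on the function. In (iii) smoothness of $g$ gives $\mathcal I(\lambda u) = \mathcal I(\lambda u')$ for every $\lambda > 0$, so $u^D_k - g \in \PSH(X,\theta' + \varepsilon_k \omega)$ is a quasi-equisingular approximation of $u'$. Combining \cref{lem: indep_quasi_Tr} with an interchange of the $\varepsilon \searrow 0$ and $k \to \infty$ limits (legitimate by double monotonicity) gives
\[
\Rest_Y^{\theta}(u) = \lim_k P_{\theta|_Y + \varepsilon_k\omega|_Y}\bigl[u^D_k|_Y\bigr], \qquad \Rest_Y^{\theta + \omega}(u) = \lim_k P_{\theta|_Y + (1+\varepsilon_k)\omega|_Y}\bigl[u^D_k|_Y\bigr],
\]
and analogously for $\Rest_Y^{\theta'}(u')$ with $u^D_k - g$ in place of $u^D_k$. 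For each $k$, the envelope $P_{\theta|_Y + \delta\omega|_Y}[u^D_k|_Y]$ is $\simeq u^D_k|_Y$ by \cite[Proposition~2.20]{DX22} (the mechanism underlying Example~\ref{ex:analytic}), and therefore shares its $\mathcal I$-singularity type; the smooth twist by $g|_Y$ in (iii) does not affect $\mathcal I$-type. Hence the two approximating sequences whose limits define the two sides of the claimed $\sim_\mathcal I$ are termwise $\sim_\mathcal I$-equivalent. The positive-mass hypothesis guarantees uniform non-degeneracy of these envelopes; an application of Guan--Zhou strong openness, in the style of the (i)$\Rightarrow$(ii) step of \cref{thm: quasi_eqvi_eqv}, then promotes the termwise identity to $\sim_\mathcal I$ of the limits.

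The hardest step is (iv): one must transfer $\sim_\mathcal I$-equivalence from approximating envelopes taken in the classes $\theta|_Y + \varepsilon_k\omega|_Y$ and $\theta|_Y + (1+\varepsilon_k)\omega|_Y$ to their decreasing limits. The positive-mass hypothesis is crucial to prevent envelope degeneration as $\varepsilon_k \searrow 0$ (so that Proposition~2.20 of \cite{DX22} applies uniformly in $k$), and strong openness is indispensable for promoting the pointwise multiplier-ideal identities to identities of $\mathcal I$-singularity type in the limit.
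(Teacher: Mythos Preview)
Your treatment of (i) and (ii) is essentially the paper's proof: the same reduction to the K\"ahler-current case via \cref{def: trace_general}, the same monotonicity of Demailly's approximants from \cref{thm:Demailly}(iv), the same model-under-decreasing-limits argument, and the same appeal to \cref{prop: vol_limit_model} for volumes.

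For (iii) and (iv) there is a genuine gap. Your setup is fine: the diagonal interchange giving
\[
\Rest_Y^{\theta}(u) = \lim_k P_{\theta|_Y + \varepsilon_k\omega|_Y}\bigl[u^D_k|_Y\bigr],
\qquad
\Rest_Y^{\theta+\omega}(u) = \lim_k P_{\theta|_Y + (1+\varepsilon_k)\omega|_Y}\bigl[u^D_k|_Y\bigr]
\]
is legitimate by double monotonicity, and the termwise $\sim_\mathcal I$-equivalence of the two approximating sequences (both $\sim u^D_k|_Y$) is correct. But the step ``Guan--Zhou strong openness \dots\ promotes the termwise identity to $\sim_\mathcal I$ of the limits'' does not go through. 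In the (i)$\Rightarrow$(ii) step of \cref{thm: quasi_eqvi_eqv} the crucial input is the quasi-equisingular inclusion $\mathcal I(\lambda' u_k)\subseteq \mathcal I(\lambda u)$, which lets you pass from $\mathcal I(\lambda' v)\subseteq \mathcal I(\lambda' u_k)$ to $\mathcal I(\lambda' v)\subseteq \mathcal I(\lambda u)$ and then invoke strong openness. In your situation the target is $\Rest_Y^\theta(u)$, and you have no analogue of $\mathcal I(\lambda' u^D_k|_Y)\subseteq \mathcal I(\lambda\,\Rest_Y^\theta(u))$; indeed, for a decreasing sequence $a_k\searrow a$ one has only $\mathcal I(\lambda a)\subseteq \bigcap_k\mathcal I(\lambda a_k)$, never the reverse. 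Termwise $\sim_\mathcal I$ simply does not pass to decreasing limits without extra input, and ``uniform non-degeneracy'' is too vague to supply it. Trying to manufacture that input via \cref{thm: quasi_eqvi_eqv}(iii)$\Rightarrow$(i) would require knowing that $\Rest_Y^\theta(u)$ is $\mathcal I$-model, which is not established.

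The paper avoids this trap entirely by working divisorially. By \cite[Corollary~2.16]{DX22} (the Boucksom--Favre--Jonsson characterization), $\sim_\mathcal I$ is equivalent to equality of generic Lelong numbers $\nu(\,\cdot\,,E)$ along every prime divisor $E$ over $Y$. The second half of \cref{prop: vol_limit_model}, which is exactly where the positive-mass hypothesis is consumed, says that for decreasing model potentials with positive limiting mass these Lelong numbers converge. Hence
\[
\nu\bigl(\Rest_Y^\theta(u),E\bigr)=\lim_k \nu\bigl(u^D_k|_Y,E\bigr)=\nu\bigl(\Rest_Y^{\theta+\omega}(u),E\bigr),
\]
and similarly for (iii) with $u^D_k-g$ in place of $u^D_k$. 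This one-line computation replaces your unjustified limit passage; you should adopt the divisorial route for (iii) and (iv).
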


\begin{proof} (i) Assume that $v\preceq_{\mathcal{I}}w$.  The fact that $v^D_j \preceq w^D_j$ for the quasi-equisingular approximations of $v,w \in \PSH(X, \theta + \varepsilon \omega)$ comes from the construction of \cref{thm:Demailly}. As a result, $\Rest^{\theta + \varepsilon \omega}_Y(v) \leq \Rest^{\theta + \varepsilon \omega}_Y(w)$. Let $\varepsilon \to 0$, the result follows.

\smallskip
\noindent (ii) Notice that $\Rest^{\theta+ \varepsilon \omega}_Y(u)$ are model potentials in $\PSH(Y,\theta|_Y+ \varepsilon \omega|_Y)$ due to \cite[Corollary~4.7]{DDNLmetric}. We argue that $\Rest^{\theta}_Y(u)\coloneqq  \lim_{\varepsilon \searrow }\Rest^{\theta+ \varepsilon \omega}_Y(u)$ is also a model potential in $\PSH(Y,\theta|_Y)$. Let $v \in \PSH(Y,\theta|_Y)$ with $v \leq 0$ and $v \preceq \Rest^{\theta}_Y(u)$. Then also $v \preceq \Rest^{\theta + \varepsilon \omega}_Y(u)$, which implies that $v \leq \Rest^{\theta + \varepsilon \omega}_Y(u)$. Taking the limit, it results that $v \leq \Rest^{\theta}_Y(u)$, showing that $\Rest^{\theta}_Y(u)$ is $\theta|_Y$-model.

The convergence of volumes follows from \cref{prop: vol_limit_model}.

\smallskip
\noindent (iii) Let $E$ be a prime divisor over $Y$. It suffices to show that 
\[
\nu(\Tr^{\theta}_Y(u),E)=\nu(\Tr^{\theta'}_Y(u'),E).
\]
By \cref{def: trace_general} and \cref{prop: vol_limit_model}, it suffices to show 
\[
\nu(\Tr^{\theta+\epsilon \omega}_Y(u),E)=\nu(\Tr^{\theta'+\epsilon\omega}_Y(u'),E)
\]
for any $\epsilon>0$. Let $u_j\in \PSH(X,\theta+\epsilon\omega)$ be a quasi-equisingular approximation of $u$, then $u_j-g$ is a quasi-equisingular approximation of $u'$. From \cref{prop: vol_limit_model} it follows that
\[
\nu(\Tr^{\theta+\epsilon \omega}_Y(u),E)=\lim_{j\to\infty}\nu(u_j|_Y,E)=\lim_{j\to\infty}\nu(u_j|_Y-g|_Y,E)=\nu(\Tr^{\theta'+\epsilon\omega}_Y(u'),E).
\]

\noindent (iv) Let $E$ be a prime divisor over $Y$. It suffices to show that
\[
\nu(\Tr_Y^{\theta}(u),E)= \nu(\Tr_Y^{\theta+\omega}(u),E).
\]
In view of \cref{prop: vol_limit_model} and \cref{def: trace_general}, it suffices to show that
\[
\nu(\Tr_Y^{\theta+\epsilon\omega}(u),E)= \nu(\Tr_Y^{\theta+\omega}(u),E)
\]
for each $\epsilon>0$. Let $u_j\in \PSH(X,\theta+\epsilon\omega)$ be a quasi-equisingular approximation of $u$, viewed as an element in $\PSH(X,\theta+\epsilon\omega)$. Then $u_j$ is also a quasi-equisingular approximation of $u$, viewed as an element in $\PSH(X,\theta+\omega)$. It follows that 
\[
\nu(\Tr_Y^{\theta+\epsilon\omega}(u),E)=\lim_{j\to\infty}\nu(u_j|_Y,E)= \nu(\Tr_Y^{\theta+\omega}(u),E).
\]
\end{proof}

\begin{lemma}\label{lem: linear_trace} The trace operator satisfies the following linearity properties:
\begin{enumerate}[label=(\roman*)]
    \item Suppose that $u \in \PSH_>(X,\theta)$ and $v \in \PSH_>(X,\theta')$ for another closed real smooth $(1,1)$-form $\theta'$ on $X$. Assume that $\nu(u,Y)=\nu(v,Y)=0$, then
\begin{equation}\label{eq: linear_Tr}
\Rest_Y^\theta(u) + \Rest_Y^{\theta'}(v) \simeq _\mathcal I \Rest_Y^{\theta + \theta'}(u + v).
\end{equation}
\item Let $u \in \PSH_>(X,\theta)$ with $\nu(u,Y)=0$. For any $\lambda>0$, we have
$\lambda\Rest_Y^\theta(u)=\Rest_Y^{\lambda\theta}(\lambda u).$
\end{enumerate}
\end{lemma}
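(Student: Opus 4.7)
For part (ii), the plan is a direct rescaling argument. A Demailly quasi-equisingular approximation $u_j^D\in \PSH(X,\theta+\varepsilon_j\omega)$ of $u$ from \cref{thm:Demailly} becomes, upon scaling by $\lambda$, a quasi-equisingular approximation $\lambda u_j^D\in \PSH(X,\lambda\theta+\lambda\varepsilon_j\omega)$ of $\lambda u$; all three conditions of \cref{def:equising} transfer trivially by re-parametrizing $\lambda' > \lambda > 0$ as $\lambda\lambda', \lambda\lambda$. The bijection $v \mapsto \lambda v$ between the families defining $P_{\theta|_Y+\varepsilon_j\omega|_Y}[u_j^D|_Y]$ and $P_{\lambda\theta|_Y+\lambda\varepsilon_j\omega|_Y}[\lambda u_j^D|_Y]$ then yields the pointwise identity $P_{\lambda\theta|_Y+\lambda\varepsilon_j\omega|_Y}[\lambda u_j^D|_Y]=\lambda\, P_{\theta|_Y+\varepsilon_j\omega|_Y}[u_j^D|_Y]$, and passing to the limit via \cref{lem: indep_quasi_Tr} completes (ii).

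For part (i), the plan is to reduce the $\mathcal{I}$-equivalence to a Lelong-number identity. Using the characterization of $\sim_\mathcal{I}$ in terms of Lelong numbers along prime divisors (\cite[Corollary~2.16]{DX22}, as rephrased after \cref{def:equising}) together with additivity of Lelong numbers of psh functions on sums, it suffices to show
\[
\nu\bigl(\Rest_Y^{\theta+\theta'}(u+v),E\bigr)=\nu\bigl(\Rest_Y^\theta(u),E\bigr)+\nu\bigl(\Rest_Y^{\theta'}(v),E\bigr)
\]
for every prime divisor $E$ over $Y$. Pick Demailly approximations $u_j^D,v_j^D$ of $u,v$ and set $w_j:=u_j^D+v_j^D\in \PSH(X,\theta+\theta'+2\varepsilon_j\omega)$. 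For each individual trace, apply \cref{prop: vol_limit_model} on $Y$ to the decreasing sequence of model envelopes $P_{\theta|_Y+\varepsilon_j\omega|_Y}[u_j^D|_Y]$; the positive-mass hypothesis is supplied by \cref{lem: non_zero_vol} in the K\"ahler current regime. This gives $\nu(\Rest_Y^\theta(u),E)=\lim_j\nu(u_j^D|_Y,E)$ (and likewise for $v$) upon noting that the model envelope of an analytic-singularity-type potential is $\mathcal{I}$-equivalent to the potential itself and therefore shares its Lelong numbers.

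The main obstacle is to verify that $w_j$ is itself a quasi-equisingular approximation of $u+v$. Once this is granted, \cref{lem: indep_quasi_Tr} identifies $\Rest_Y^{\theta+\theta'}(u+v)$ with $\lim_j P_{(\theta+\theta')|_Y+2\varepsilon_j\omega|_Y}[w_j|_Y]$, and the same application of \cref{prop: vol_limit_model} combined with additivity $\nu(w_j|_Y,E)=\nu(u_j^D|_Y,E)+\nu(v_j^D|_Y,E)$ finishes the proof. The cleanest route to verify quasi-equisingularity of $w_j$ is via the volume condition (iii) of \cref{thm: quasi_eqvi_eqv}: expand $\int_X (\theta+\theta'+2\varepsilon_j\omega)^n_{w_j}$ by multilinearity of non-pluripolar products into mixed masses of $(\theta+\varepsilon_j\omega)_{u_j^D}$ and $(\theta'+\varepsilon_j\omega)_{v_j^D}$, pass to the limit in each mixed term by the monotone convergence results of \cite{DDNLmetric}, and match against the corresponding mixed expansion of $\int_X (\theta+\theta')^n_{P_{\theta+\theta'}[u+v]_\mathcal{I}}$. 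An elementary alternative is to transfer quasi-equisingularity from a Demailly approximation $(u+v)_j^D$ of $u+v$ by producing a sandwich $(1+j^{-1})w_{b_j}\preceq (u+v)^D_{a_j}\preceq (1-j^{-1})w_j$ via \cite[Corollary~4.1.7]{Dem15}, mirroring the strategy in the proof of \cref{lem: indep_quasi_Tr}.
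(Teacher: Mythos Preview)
Your strategy for part~(i) is exactly the paper's: reduce $\sim_{\mathcal I}$ to equality of Lelong numbers along prime divisors over $Y$ via \cite[Corollary~2.16]{DX22}, use \cref{lem: non_zero_vol} to guarantee positive mass, and then apply \cref{prop: vol_limit_model} to the decreasing model envelopes on $Y$. However, you work too hard to verify that $w_j=u_j^D+v_j^D$ is quasi-equisingular for $u+v$: the paper simply cites \cite[Theorem~4.1.9]{Dem15}, which establishes precisely this. Your proposed ``cleanest route'' via condition~(iii) of \cref{thm: quasi_eqvi_eqv} is not workable as stated: to match the mixed expansion of $\int_X(\theta+\theta'+2\varepsilon_j\omega)^n_{w_j}$ against $\int_X(\theta+\theta')^n_{P_{\theta+\theta'}[u+v]_{\mathcal I}}$ you would already need to understand the $\mathcal I$-singularity type of $u+v$ in terms of those of $u$ and $v$, which is essentially the content of the lemma.

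For part~(ii) there is a genuine gap. In this paper's convention (see \eqref{eq: analyt_sing_type_def}) analytic singularity type requires the coefficient $c$ to be \emph{rational}. Hence for irrational $\lambda$ the potentials $\lambda u_j^D$ do not have analytic singularity type, so they do not constitute a quasi-equisingular approximation in the sense of \cref{def:equising}, and you cannot invoke \cref{lem: indep_quasi_Tr} to identify their envelope limit with $\Rest_Y^{\lambda\theta}(\lambda u)$. The paper avoids this by choosing an increasing sequence of positive rationals $\lambda_j\nearrow\lambda$ (assuming $u,u_j\leq 0$), so that $\lambda_j u_j$ is a genuine quasi-equisingular approximation of $\lambda u$; it then runs the Lelong-number argument of part~(i) to obtain $\lambda\Rest_Y^\theta(u)\sim_{\mathcal I}\Rest_Y^{\lambda\theta}(\lambda u)$, and upgrades to equality since both sides are $\mathcal I$-model in $\PSH(Y,\lambda\theta|_Y)$. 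Your direct envelope-scaling identity $P_{\lambda\theta|_Y}[\lambda u_j^D|_Y]=\lambda\,P_{\theta|_Y}[u_j^D|_Y]$ is correct and gives a slicker proof for rational $\lambda$, but the irrational case needs this extra approximation step.
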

\begin{proof}
    (i) Let $u_j^D\in \PSH(X,\theta)$, $v_j^D\in \PSH(X,\theta')$ be quasi-equisingular approximations of $u$ and $v$ respectively constructed as in \cref{thm:Demailly}.
    It follows from \cite[Theorem~4.1.9]{Dem15} that $u_j^D+v_j^D$ is a quasi-equisingular approximation of $u+v$. 

    We argue \eqref{eq: linear_Tr}. By \cref{lem: non_zero_vol},
    \[
    \int_Y \left( \theta|_Y+\ddc \Rest_Y^\theta(u) \right)^m>0,\quad  \int_Y \left( \theta'|_Y+\ddc \Rest_Y^{\theta'}(v) \right)^m>0,
    \]
 \cref{prop: vol_limit_model} gives that for any divisor $E$ over $Y$ we have:
\[
\begin{split}
\lim_{j\to\infty} \nu(u^D_j|_Y,E) = \nu(\Rest_Y^\theta(u),E), \quad \lim_{j\to\infty} \nu(v^D_j|_Y,E) = \nu(\Rest_Y^\theta(v),E),\\ \lim_{j\to\infty} \nu(u^D_j|_Y+v^D_j|_Y,E) = \nu(\Rest_Y^{\theta+\theta'}(u+v),E).
\end{split}
\]
From \cite[Corollary~2.16]{DX22}, \eqref{eq: linear_Tr} immediately follows.

(ii) Let $u_j\in \PSH(X,\theta)$ be a quasi-equisingular approximation of $u$. We may assume that $u,u_j\leq 0$. Take an increasing sequence of positive rational numbers $\lambda_j$ with limit $\lambda$. It follows that $\lambda_j u_j$ is a quasi-equisingular approximation of $\lambda u$. The same argument as in the previous part shows that 
\[
\lambda\Rest_Y^\theta(u)\sim_{\mathcal{I}}\Rest_Y^{\lambda\theta}(\lambda u).
\]
By since both sides are $\mathcal{I}$-model potentials in $\PSH(Y,\lambda \theta|_Y)$, it follows that they are actually equal.
\end{proof}

Next we prove some limit theorems about the trace.

\begin{lemma}\label{lma:Traceasquasiequiapp}
    Let $u\in \PSH(X,\theta)$ with $\nu(u,Y)=0$. Let $u_j\in \PSH(X,\theta+\epsilon_j\omega)$ be a quasi-equisingular approximation of $u$. Then
    \begin{equation}\label{eq:Trvolappgeneral}
    \lim_{j\to\infty}\int_Y \left(\theta|_Y+\epsilon_j\omega|_Y+\ddc u_j|_Y\right)^m= \int_Y \left(\theta|_Y+\ddc \Tr_Y^{\theta}(u)\right)^m.
    \end{equation}
    In particular, if the right-hand side of \eqref{eq:Trvolappgeneral} is positive, we have
    \begin{equation}\label{eq:TrYgeneral}
    \Tr_Y^{\theta}(u)=\lim_{j\to\infty} P_{\theta|_Y+\epsilon_j\omega|_Y}[u_j|_Y].
    \end{equation}
\end{lemma}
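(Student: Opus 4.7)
The plan is to set $w_j \coloneqq P_{\theta|_Y + \epsilon_j \omega|_Y}[u_j|_Y]$ and analyse the decreasing limit $v \coloneqq \lim_j w_j$. Since $u_j \searrow u$ and $\epsilon_j \searrow 0$, one verifies that $\{w_j\}$ is decreasing (any candidate for $w_k$ with $k \geq j$ is a candidate for $w_j$). Each $w_j$ is model in $\PSH(Y, \theta|_Y + \epsilon_j \omega|_Y)$ by \cite[Corollary~4.7]{DDNLmetric}, from which a direct argument shows that $v$ is $\theta|_Y$-model. Writing $V_j \coloneqq \int_Y (\theta|_Y + \epsilon_j \omega|_Y + \ddc u_j|_Y)^m$, \cite[Theorem~1.1]{WN19} gives $V_j = \int_Y (\theta|_Y + \epsilon_j \omega|_Y + \ddc w_j)^m$, and applying \cref{prop: vol_limit_model} to the decreasing sequence of model potentials $\{w_j\}$ yields $\lim_{j\to\infty} V_j = \int_Y (\theta|_Y + \ddc v)^m$.

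Next I would establish $v \leq \Tr^\theta_Y(u)$. Fix $\varepsilon > 0$. For $j$ large with $\epsilon_j \leq \varepsilon$, the sequence $\{u_j\}_{j \geq j_\ast(\varepsilon)}$ serves as a quasi-equisingular approximation of $u \in \PSH_>(X, \theta + \varepsilon \omega)$ (with vanishing perturbation parameter), so \cref{lem: indep_quasi_Tr} gives $\Tr^{\theta + \varepsilon \omega}_Y(u) = \lim_j P_{\theta|_Y + \varepsilon \omega|_Y}[u_j|_Y]$. Since enlarging the cohomology class enlarges the candidate set for the envelope, $w_j \leq P_{\theta|_Y + \varepsilon \omega|_Y}[u_j|_Y]$. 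Letting $j \to \infty$ yields $v \leq \Tr^{\theta + \varepsilon \omega}_Y(u)$, and then $\varepsilon \searrow 0$ gives $v \leq \Tr^\theta_Y(u)$ by \cref{def: trace_general}.

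To obtain the matching volume, I would sandwich $V_j$ via multilinearity. For $\varepsilon \geq \epsilon_j$, expanding $(\theta|_Y + \varepsilon \omega|_Y + \ddc u_j|_Y)^m$ around $(\theta|_Y + \epsilon_j \omega|_Y + \ddc u_j|_Y)$ yields
\[
V_j \leq \int_Y (\theta|_Y + \varepsilon \omega|_Y + \ddc u_j|_Y)^m = V_j + \sum_{k=1}^m \binom{m}{k}(\varepsilon - \epsilon_j)^k A_{j,k},
\]
where $A_{j,k} \coloneqq \int_Y \omega|_Y^k \wedge (\theta|_Y + \epsilon_j\omega|_Y + \ddc u_j|_Y)^{m-k}$. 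Each $A_{j,k}$ is bounded by $\int_Y((1+\epsilon_j)\omega|_Y + \theta|_Y + \ddc u_j|_Y)^m \leq \vol(\{\theta\}|_Y + (1+\epsilon_j)\{\omega\}|_Y)$, hence uniformly bounded in $j$. Thus the right-hand side is $V_j + O(\varepsilon)$ uniformly in $j$ as $\varepsilon \to 0$. For each fixed $\varepsilon>0$, the middle quantity converges as $j \to \infty$ to $\int_Y (\theta|_Y + \varepsilon \omega|_Y + \ddc \Tr^{\theta + \varepsilon \omega}_Y(u))^m$ by repeating the \cite[Theorem~1.1]{WN19} plus \cref{prop: vol_limit_model} argument with $\theta + \varepsilon\omega$ in the role of $\theta$. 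Letting $\varepsilon \searrow 0$ and using \cref{lem: trace_vol_prop}(ii), the middle converges to $\int_Y (\theta|_Y + \ddc \Tr^\theta_Y(u))^m$. Combined with $\lim_j V_j = \int_Y (\theta|_Y + \ddc v)^m$, this proves \eqref{eq:Trvolappgeneral}.

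For \eqref{eq:TrYgeneral}, assume the common volume is positive. Then $v$ and $\Tr^\theta_Y(u)$ are both $\theta|_Y$-model with $v \leq \Tr^\theta_Y(u)$ and equal positive non-pluripolar volume, so they coincide by \cite[Theorem~1.3]{DDNL18mono}, yielding the diagonal identity $\Tr^\theta_Y(u) = \lim_j w_j$. The main obstacle in this strategy is the multilinearity sandwich: producing the uniform-in-$j$ bound on the mixed masses $A_{j,k}$, and correctly interlacing the $j \to \infty$ and $\varepsilon \searrow 0$ limits so that the $O(\varepsilon)$ error survives passage to the limit in $j$ first.
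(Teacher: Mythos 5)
Your proof is correct and takes essentially the same route as the paper's: fix $\varepsilon>0$, view $\{u_j\}$ as a quasi-equisingular approximation in $\PSH(X,\theta+\varepsilon\omega)$, use \cref{lem: indep_quasi_Tr} and \cref{prop: vol_limit_model} to identify $\lim_j\int_Y(\theta|_Y+\varepsilon\omega|_Y+\ddc u_j|_Y)^m$ with the mass of $\Tr_Y^{\theta+\varepsilon\omega}(u)$, let $\varepsilon\searrow 0$ via \cref{lem: trace_vol_prop}(ii), and deduce \eqref{eq:TrYgeneral} from the fact that both sides are $\theta|_Y$-model with the same positive mass. The only (harmless) deviation is that you derive the lower bound $\varliminf_j V_j\geq\int_Y(\theta|_Y+\ddc\Tr_Y^{\theta}(u))^m$ from the quantitative multilinearity sandwich $V_j\leq M_j(\varepsilon)\leq V_j+O(\varepsilon)$, whereas the paper obtains it in one line from $\Tr_Y^{\theta}(u)\preceq u_j|_Y$ and the monotonicity theorem of \cite{WN19}.
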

\begin{proof}
Fixing $\varepsilon >0$, after discarding finitely many terms, the sequence $u_j \in \PSH(X,\theta+\epsilon\omega)$ is a quasi-equisingular approximation of $u \in \PSH(X,\theta+\epsilon\omega)$.
Using \cref{lem: indep_quasi_Tr} and \cref{prop: vol_limit_model}
we can start the following sequence of estimates
    \[
    \begin{split}
    \int_Y\left(\theta|_Y+\epsilon \omega|_Y+\ddc \Tr_Y^{\theta+\epsilon\omega}(u)\right)^m & =\lim_{j\to\infty}\int_Y \left(\theta|_Y+\epsilon \omega|_Y+\ddc P_{\theta|_Y+\epsilon \omega|_Y}[u_j|_Y]\right)^m\\
    &=\lim_{j\to\infty}\int_Y \left(\theta|_Y+\epsilon \omega|_Y+\ddc u_j|_Y\right)^m\\
    & \geq  \varlimsup_{j\to\infty}\int_Y \left(\theta|_Y+\epsilon_j\omega|_Y+\ddc u_j|_Y\right)^m\\
    &\geq  \varliminf_{j\to\infty}\int_Y \left(\theta|_Y+\epsilon_j\omega|_Y+\ddc u_j|_Y\right)^m\\
    &\geq \int_Y \left(\theta|_Y+ \ddc \Tr_Y^{\theta}(u) \right)^m,
    \end{split}
    \]
where in the last line we used \cite[Theorem~1.1]{WN19} for $\Tr_Y^{\theta}(u) \preceq u_j|_Y$.
    Let $\epsilon \searrow 0$, \cref{lem: trace_vol_prop}(ii) gives \eqref{eq:Trvolappgeneral} as desired.

It remains to argue \eqref{eq:TrYgeneral}. By \cref{prop: vol_limit_model} and \eqref{eq:Trvolappgeneral}, potentials on both sides of \eqref{eq:TrYgeneral} have the same positive mass. Due to the argument of \cref{lem: trace_vol_prop}(ii)  both sides are also $\theta|_Y$-model potentials, and the left-hand side is clearly dominated by the right-hand side. We conclude that they are actually equal by \cite[Theorem~1.3]{DDNL18mono}.
\end{proof}

We argue continuity of the trace operator along decreasing sequences.

\begin{lemma}\label{lem: trace_decreasing_convergence}Suppose that $u_j,u \in \PSH(X,\theta -\delta \omega)$ for some $\delta >0$. Assume that $P_\theta [u_j]_\mathcal I \searrow P_\theta[u]_\mathcal I$ as $j\to\infty$ and $\nu(u,Y)=0$. Then $\Rest_Y^\theta[u_j] \searrow \Rest_Y^\theta[u]$ as $j\to\infty$.
\end{lemma}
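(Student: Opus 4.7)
The strategy is to reduce continuity along decreasing sequences to an equality of non-pluripolar masses, which can then be upgraded to an equality of model potentials via \cite[Theorem~1.3]{DDNL18mono}.

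I first establish monotonicity and the model property of the limit. The hypothesis $P_\theta[u_j]_{\mathcal I}\searrow P_\theta[u]_{\mathcal I}$ gives both $P_\theta[u_{j+1}]_{\mathcal I}\preceq_{\mathcal I}P_\theta[u_j]_{\mathcal I}$ and $P_\theta[u]_{\mathcal I}\preceq_{\mathcal I}P_\theta[u_j]_{\mathcal I}$, so \cref{lem: trace_vol_prop}(i) yields a decreasing sequence $\Rest_Y^\theta(u_j)$ satisfying $\Rest_Y^\theta(u_j)\geq\Rest_Y^\theta(u)$ for every $j$. Set $w\coloneqq\lim_j\Rest_Y^\theta(u_j)$. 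Each $\Rest_Y^\theta(u_j)$ is $\theta|_Y$-model by \cref{lem: trace_vol_prop}(ii), and decreasing limits of $\theta|_Y$-model potentials remain so (\cite[Corollary~4.7]{DDNLmetric}), hence $w$ is $\theta|_Y$-model.

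Since $u\in\PSH(X,\theta-\delta\omega)$, \cref{lem: non_zero_vol} ensures $\int_Y(\theta|_Y+\ddc\Rest_Y^\theta(u))^m>0$. As two $\theta|_Y$-model potentials with the same positive non-pluripolar mass must be equal by \cite[Theorem~1.3]{DDNL18mono}, it suffices to verify $\int_Y(\theta|_Y+\ddc w)^m=\int_Y(\theta|_Y+\ddc\Rest_Y^\theta(u))^m$. For this volume equality I construct a diagonal quasi-equisingular approximation of $u$ out of Demailly's approximations of the $u_j$. For each $j$, let $\{(u_j)^D_k\}_k$ be the approximation supplied by \cref{thm:Demailly}, refined via \cref{rmk:Demapp} so that $(u_j)^D_k\in\PSH(X,\theta-\tfrac{\delta}{2}\omega)$ for all $k$ sufficiently large. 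Applying \cref{lma:Traceasquasiequiapp} to $u_j$ yields $\int_Y(\theta|_Y+\ddc\Rest_Y^\theta(u_j))^m=\lim_k\int_Y(\theta|_Y+\ddc(u_j)^D_k|_Y)^m$, so I can select $k_j$ large enough that the $j$-th integral is within $1/j$ of this limit. Setting $w_j\coloneqq(u_j)^D_{k_j}$, once $\{w_j\}$ is shown to be a quasi-equisingular approximation of $u$, a second application of \cref{lma:Traceasquasiequiapp} on the $u$ side gives $\int_Y(\theta|_Y+\ddc\Rest_Y^\theta(u))^m=\lim_j\int_Y(\theta|_Y+\ddc w_j|_Y)^m=\lim_j\int_Y(\theta|_Y+\ddc\Rest_Y^\theta(u_j))^m$, and the latter equals $\int_Y(\theta|_Y+\ddc w)^m$ by \cref{prop: vol_limit_model} applied to $\Rest_Y^\theta(u_j)\searrow w$.

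The main obstacle is verifying that the diagonal $\{w_j\}$ is genuinely a quasi-equisingular approximation of $u$ in the sense of \cref{def:equising}; the critical condition is $\mathcal I(\lambda'w_j)\subseteq\mathcal I(\lambda u)$ for every $\lambda'>\lambda>0$ and $j$ large. Demailly's inclusion \cref{thm:Demailly}(iii) provides $\mathcal I(\lambda'(u_j)^D_{k_j})\subseteq\mathcal I(s_j u_j)$ with $s_j\nearrow\lambda'$ as $k_j\to\infty$, so the remaining task is to pass from multiplier ideals of $u_j$ to those of $u$. This is precisely the content of the characterization in \cref{thm: quasi_eqvi_eqv}, which identifies $\mathcal I$-envelope convergence with quasi-equisingular convergence, combined with Guan--Zhou's strong openness theorem applied in the style of the proof of (i)$\Rightarrow$(ii) there. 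A minor secondary concern is arranging the $w_j$ to be decreasing; this can be resolved either by inductively enlarging $k_j$ so that $(u_j)^D_{k_j}$ is dominated by the previously chosen term, or by replacing $w_j$ by the $\theta|_Y$-model envelope of the straightforward diagonal.
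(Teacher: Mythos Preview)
Your approach coincides with the paper's in its core idea: both construct a diagonal sequence $w_j=(u_j)^D_{k_j}$ from Demailly's approximations of the $u_j$ and argue it is a quasi-equisingular approximation of $u$. The execution differs in three places, and the paper's is tighter in each. First, what you flag as a ``minor secondary concern'' --- arranging the diagonal to be decreasing --- is in the paper the key structural point: via \cref{thm:Demailly}(iv) the singularity type of $u^D_{j,k}$ is governed by $\mathcal I(2^k u_j)$, hence monotone in \emph{both} indices $j$ and $k$ (monotonicity in $j$ coming from $P_\theta[u_j]_{\mathcal I}\searrow P_\theta[u]_{\mathcal I}$). This makes the diagonal extraction automatic and renders your proposed inductive enlargement of $k_j$ unnecessary. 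Second, rather than checking the multiplier-ideal inclusion of \cref{def:equising}(iii) directly as you sketch, the paper verifies the $X$-volume condition $\int_X\theta^n_{P_\theta[u^D_{j,k_j}]}\searrow\int_X\theta^n_{P_\theta[u]_{\mathcal I}}$ and invokes \cref{thm: quasi_eqvi_eqv}(iii)$\Rightarrow$(i); this sidesteps the H\"older/Guan--Zhou manipulation you allude to. Third, the paper concludes via the sandwich $P_{\theta|_Y}[u^D_{j,k_j}|_Y]\geq\Rest_Y^\theta(u_j)\geq\Rest_Y^\theta(u)$ together with \cref{lem: indep_quasi_Tr}, which immediately forces $\Rest_Y^\theta(u_j)\searrow\Rest_Y^\theta(u)$; this bypasses your separate $Y$-volume computation via \cref{lma:Traceasquasiequiapp} and the final appeal to \cite[Theorem~1.3]{DDNL18mono}. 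Your route is viable, but the paper's avoids the detour through $Y$-masses entirely.
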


\begin{proof} Let $u^D_{j,k} \in \PSH(X,\theta - \frac{\delta}{2}\omega)$ be the quasi-equisingular approximation of $u_j$, guaranteed by \cref{thm:Demailly}.
As the singularity type of $u^D_{j,k}$ is governed by $(2^{-k},\mathcal I(2^k u_j))$ (see \cref{thm:Demailly}(iv)), we obtain that the singularity type of $u^D_{j,k}$ is monotone-decreasing in both indices.

Due to \cref{prop: vol_limit_model} we have that
\[
\begin{aligned}
\int_X \theta^n_{P_\theta[u^D_{j,k}]} \searrow & \int_X \theta^n_{P_\theta[u_j]_\mathcal I} \geq \int_X \theta^n_{u_j}  \geq \delta^n \int_X \omega^n.\\
\int_X \theta^n_{P_\theta[u_{j}]_\mathcal I} \searrow & \int_X \theta^n_{P_\theta[u]_\mathcal I} \geq \int_X \theta^n_{u}  \geq \delta^n \int_X \omega^n.
\end{aligned}
\]
As the singularity type of $u^D_{j,k}$ is monotone in both indices, using \cite[Theorem~1.1]{WN19} and the above convergences, there exists $k_j \nearrow \infty$ so that
\[
\int_X \theta^n_{P_\theta[u^D_{j,k_j}]_\mathcal I} \searrow \int_X \theta^n_{P_\theta[u]_\mathcal I} \geq \delta^n \int_X \omega^n.
\]
By \cref{thm: quasi_eqvi_eqv} we obtain that $u^D_{j,k_j}$ is a quasi-equisingular approximation of $u$. 
In addition, $P_{\theta|_Y}[u^D_{j,k_j}] \geq \Rest_Y^\theta[u_j] \geq \Rest_Y^\theta[u]$.
The last two sentences and \cref{lem: indep_quasi_Tr} now give that 
$\Rest_Y^\theta[u_j] \searrow \Rest_Y^\theta[u]$, as desired.
\end{proof}

Perhaps unexpectedly, the analogous property does not hold along increasing sequences.
\begin{example}
     Consider $X=\mathbb{P}^2$ and $\theta$ is the Fubini--Study form on $X$. For simplicity, we use affine coordinates $z_1,z_2$ on $\mathbb{C}^2\subseteq \mathbb{P}^2$. Consider $\theta$-psh functions $u_k$ with isolated singularities at $0$:
     \[
     u_k(z_1,z_2)=\max\left\{ \log |z_1|^2,  k^{-1}\log |z_2|^2 \right\}-\log \left(1+|z_1|^2+|z_2|^2\right).
     \]
     The sequence is increasing with usc regularized limit $0$. However, the trace operator of $u_k$ with respect to the hyperplane $\{z_2=0\}$ is independent of $k$ and has a log pole at the point $\{z_1=0\}$.
\end{example}
The failure of the continuity can also be easily understood in the toric situation, where the trace operator corresponds to intersecting the Newton body of the psh metric with the hypersurface corresponding to the submanifold. The latter intersection is clearly not continuous with respect to increasing sequences.
See \cite[Theorem~12.3.2]{Xiabook} for the details.

\paragraph{Restricted volumes and the trace operator.}   When $Y$ is not contained in the non-K\"ahler locus of $\{\theta\}$, Matsumura defines the restricted volume of $\{\theta\}$ to $Y$ in the following manner (\cite[Definition~1.4]{Mat13}):
\begin{equation}\label{eq: rest_vol_Mat_def}
        \vol_{X|Y}(\{\theta\})\coloneqq \sup_{\substack{\varphi\in \PSH(X,\theta)\\ [\varphi] \in \mathcal A, \ \varphi|_Y\not\equiv -\infty} } \int_Y (\theta|_Y+\ddc \varphi|_Y)^m.
\end{equation}
Here $[u]\in \mathcal{A}$ simply means that $u$ has analytic singularity type (recall \eqref{eq: analyt_sing_type_def}).

In case $Y \subset X$ is not contained in the non-nef locus of $\{\theta\}$,  Collins--Tosatti \cite{CT22} extend the above definition of restricted volume, making connection with the notion of movable intersection due to Boucksom \cite{Bo02}:
\begin{equation}\label{eq: rest_vol_CT_def}
        \vol_{X|Y}(\{\theta\})\coloneqq \lim_{\varepsilon \searrow 0}\sup_{\substack{\varphi\in \PSH(X,\theta + \varepsilon \omega)\\ [\varphi] \in \mathcal A, \ \varphi|_Y\not\equiv -\infty}} \int_Y (\theta|_Y + \varepsilon \omega|_Y +\ddc \varphi|_Y)^m.
\end{equation}

In what follows we give interpretations of $\vol_{X|Y}(\{\theta\})$ using the trace operator, while revisiting \cite{CT22, His12, Mat13}. 

\begin{proposition}\label{prop:restvol_voltrace}
Assume that $Y$ is not contained in the non-K\"ahler locus of $\{\theta\}$, then
    \begin{equation}\label{eq:volrest}
        \vol_{X|Y}(\{\theta\})= \int_Y \left( \theta|_Y+\ddc \Tr_Y^{\theta}(V_{\theta}) \right)^m =\int_Y (\theta|_Y + \ddc V_\theta|_Y)^m.
    \end{equation}
\end{proposition}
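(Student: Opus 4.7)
The strategy is to establish the cyclic chain
\[
\int_Y (\theta|_Y + \ddc V_\theta|_Y)^m \leq \int_Y (\theta|_Y + \ddc \Tr_Y^\theta(V_\theta))^m \leq \vol_{X|Y}(\{\theta\}) \leq \int_Y (\theta|_Y + \ddc V_\theta|_Y)^m,
\]
which will force equality throughout. Fix a Demailly quasi-equisingular approximation $u_j^D \in \PSH(X, \theta + \delta_j \omega)$ of $V_\theta$ from \cref{thm:Demailly}, with $\delta_j \searrow 0$, normalized so that $u_j^D \leq 0$ and $u_j^D \searrow V_\theta$. Since $Y \not\subseteq \mathrm{nK}(\{\theta\}) \supseteq \mathrm{nn}(\{\theta\})$, the generic Lelong number $\nu(V_\theta, Y)$ vanishes and $u_j^D|_Y \not\equiv -\infty$ for all $j$, so $\Tr_Y^\theta(V_\theta)$ is well-defined.

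The first inequality of the chain is \cref{rem:naiverestprecRest} combined with the monotonicity of non-pluripolar mass under $\preceq$ from \cite[Theorem~1.1]{WN19}. The second inequality follows because each $u_j^D$ has analytic singularity type with $u_j^D|_Y \not\equiv -\infty$, so it is a valid candidate in the Collins--Tosatti supremum at perturbation level $\delta_j$; invoking \cref{lma:Traceasquasiequiapp} applied to $V_\theta$ gives
\[
\int_Y (\theta|_Y + \ddc \Tr_Y^\theta(V_\theta))^m = \lim_{j \to \infty} \int_Y (\theta|_Y + \delta_j\omega|_Y + \ddc u_j^D|_Y)^m \leq \vol_{X|Y}(\{\theta\}).
\]

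The closing inequality is the crux. For any analytic singularity type $\varphi \in \PSH(X, \theta + \varepsilon\omega)$ with $\varphi \leq 0$ and $\varphi|_Y \not\equiv -\infty$, the relation $\varphi \leq V_{\theta + \varepsilon\omega}$ and \cite[Theorem~1.1]{WN19} yield
\[
\int_Y (\theta|_Y + \varepsilon\omega|_Y + \ddc\varphi|_Y)^m \leq \int_Y (\theta|_Y + \varepsilon\omega|_Y + \ddc V_{\theta+\varepsilon\omega}|_Y)^m,
\]
so that $\vol_{X|Y}(\{\theta\}) \leq \lim_{\varepsilon \searrow 0} \int_Y (\theta|_Y + \varepsilon\omega|_Y + \ddc V_{\theta+\varepsilon\omega}|_Y)^m$. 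To bound this limit by $\int_Y (\theta|_Y + \ddc V_\theta|_Y)^m$, the plan is to apply \cref{lma:Traceasquasiequiapp} to a Demailly approximation of $V_{\theta + \varepsilon\omega}$, identify the right-hand side with $\int_Y (\theta|_Y + \varepsilon\omega|_Y + \ddc \Tr_Y^{\theta+\varepsilon\omega}(V_{\theta+\varepsilon\omega}))^m$, and compare with $\Tr_Y^\theta(V_\theta)$ using the volume continuity from \cref{lem: trace_vol_prop}(ii), together with the uniqueness of $\theta|_Y$-model potentials with prescribed non-pluripolar mass given by \cite[Theorem~1.3]{DDNL18mono}.

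The hardest point is the simultaneous limit of form $\theta + \varepsilon\omega \to \theta$ and potential $V_{\theta + \varepsilon\omega} \to V_\theta$: the $V_{\theta+\varepsilon\omega}$ decrease pointwise to $V_\theta$, yet are strictly less singular near the non-K\"ahler locus, so one must show that the ``extra'' mass they contribute vanishes as $\varepsilon \searrow 0$. I expect to handle this by feeding common Demailly approximations (via \cite[Corollary~4.1.7]{Dem15}) into both sides, combined with multilinearity of the non-pluripolar product in the positive current $\omega|_Y$ as in
\[
\int_Y (\theta|_Y + \varepsilon\omega|_Y + \ddc V_\theta|_Y)^m = \sum_{k=0}^m \binom{m}{k}\varepsilon^k \int_Y (\omega|_Y)^k \wedge (\theta|_Y + \ddc V_\theta|_Y)^{m-k},
\]
which is legitimate because $\omega|_Y$ and $\theta|_Y + \ddc V_\theta|_Y$ are both positive currents, and the right-hand side converges to $\int_Y (\theta|_Y + \ddc V_\theta|_Y)^m$ as $\varepsilon \searrow 0$.
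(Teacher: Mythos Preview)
Your first two inequalities in the cyclic chain are correct and match the paper: the first via \cref{rem:naiverestprecRest} and monotonicity, the second by viewing the quasi-equisingular approximants $u_j^D$ as candidates in the defining supremum at level $\delta_j$ and invoking \cref{lma:Traceasquasiequiapp} (the paper phrases this last step using Matsumura's continuity result \cite[Corollary~4.11]{Mat13}, but the content is the same).

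The gap is in the closing inequality $\vol_{X|Y}(\{\theta\}) \leq \int_Y (\theta|_Y + \ddc V_\theta|_Y)^m$. By working with the perturbed Collins--Tosatti definition \eqref{eq: rest_vol_CT_def} you reduce to controlling $\lim_{\varepsilon \searrow 0}\int_Y (\theta|_Y + \varepsilon\omega|_Y + \ddc V_{\theta+\varepsilon\omega}|_Y)^m$, and your plan for this is incomplete. The identification of this quantity with the trace mass of $V_{\theta+\varepsilon\omega}$ is precisely the second equality of \eqref{eq:volrest} applied to $\theta+\varepsilon\omega$, so invoking it here is circular. \cref{lem: trace_vol_prop}(ii) concerns $\Tr_Y^{\theta+\varepsilon\omega}(u)$ for a \emph{fixed} $u$, not the moving target $V_{\theta+\varepsilon\omega}$, so it does not apply; and the restrictions $V_{\theta+\varepsilon\omega}|_Y$ are not known to be model in $\PSH(Y,\theta|_Y+\varepsilon\omega|_Y)$, so \cref{prop: vol_limit_model} is unavailable as well. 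Finally, the multilinearity expansion you display is for $V_\theta$, not $V_{\theta+\varepsilon\omega}$, so it does not address the actual limit.

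The paper avoids this difficulty entirely by exploiting the hypothesis. Since $Y$ is not contained in the non-K\"ahler locus, Matsumura's \emph{unperturbed} definition \eqref{eq: rest_vol_Mat_def} is available: the supremum runs only over $\varphi\in\PSH(X,\theta)$ with analytic singularity type and $\varphi|_Y\not\equiv -\infty$. Every such candidate satisfies $\varphi\preceq V_\theta$, hence $\varphi|_Y\preceq V_\theta|_Y$, and \cite[Theorem~1.1]{WN19} gives the closing inequality in one line. The simultaneous limit you isolate as ``the hardest point'' is exactly what is treated later in \cref{thm: rest_volume_tr_volume} under the weaker non-nef hypothesis, and it is not needed for the present statement.
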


The equality $ \vol_{X|Y}(\{\theta\}) =\int_Y (\theta|_Y + \ddc V_\theta|_Y)^m$ was obtained in \cite[Remark 2.8]{CT22}, and implicitly in \cite{His12, Mat13}. 

\begin{proof}
We start with the first equality of \eqref{eq:volrest}. Since $Y$ is not contained in the non-K\"ahler locus of $\{\theta\}$, $V_\theta |_ Y \not \equiv -\infty$, hence also $\nu(V_{\theta},Y)=0$.

    Take a quasi-equisingular approximation $\varphi_j\in \PSH(X,\theta+\epsilon_j\omega)$ of $V_{\theta}$. Since $\varphi_j\geq V_{\theta}\succeq \varphi$ for any $\varphi\in \PSH(X,\theta)$ with analytic singularity type such that $\varphi|_Y\not\equiv -\infty$, we have $\varphi_j|_Y \succeq \varphi|_Y$, hence by the monotonicity theorem of \cite{WN19},
    \[
    \int_Y (\theta|_Y+\epsilon_j\omega+\ddc \varphi_j|_Y)^m\geq \int_Y (\theta|_Y+\ddc\varphi|_Y)^m.
    \]
    Letting $j\to \infty$ and applying \cref{lma:Traceasquasiequiapp}, we conclude that the $\geq$ direction in the first equality of \eqref{eq:volrest}.
    
   For the reverse direction, by definition, for any fixed $\epsilon>0$, we have
    \[
    \int_Y (\theta|_Y+\epsilon_j\omega|_Y+\ddc\varphi_j|_Y)^m\leq \int_Y (\theta|_Y+\epsilon\omega|_Y+\ddc\varphi_j|_Y)^m\leq \vol_{X|Y}(\{\theta\}+\epsilon\{\omega\})
    \]
    for all large enough $j$. Letting $j\to\infty$ and $\epsilon \searrow 0$, using the continuity of $\vol_{X|Y}$ (\cite[Corollary~4.11]{Mat13}) together with \cref{lma:Traceasquasiequiapp}, we conclude the first equality of \eqref{eq:volrest}.

Now we address the second equality. Due to \cite[Theorem~1.1]{WN19}, the defining formula \eqref{eq: rest_vol_Mat_def}, and the definition of $V_\theta$, we obtain that
\[
\vol_{X|Y}(\{\theta\}) \leq \int_Y (\theta|_Y + \ddc V_\theta|_Y)^m.
\]
The reverse equality now follows from the first equality of \eqref{eq:volrest}, \cite[Theorem~1.1]{WN19} and the fact that $V_\theta|_Y \preceq \Tr_Y^\theta(V_\theta)$ (Remark~\ref{rem:naiverestprecRest}).
\end{proof}

As we show now, the first formula of the previous proposition holds away from the non-nef locus of $\{\theta\}$ as well:
\begin{theorem}\label{thm: rest_volume_tr_volume} If $Y$ is not contained in the non-nef locus of $\{\theta\}$, i.e. $\nu(V_\theta, Y)=0$, then 
\begin{flalign}\label{eq: non-nef_rest_vol}
\vol_{X|Y}(\{\theta\}) &= \lim_{\varepsilon \searrow 0}\int_Y (\theta + \varepsilon \omega +  \ddc V_{\theta+ \varepsilon \omega})|_Y^m=\lim_{\varepsilon \searrow 0}\int_Y (\theta|_Y + \varepsilon \omega|_Y +  \ddc \Rest_Y^{\theta+ \varepsilon \omega}(V_{\theta+ \varepsilon \omega}) )^m \nonumber \\
&=\int_Y \left( \theta|_Y+\ddc \Tr_Y^{\theta}(V_{\theta}) \right)^m. 
\end{flalign}
\end{theorem}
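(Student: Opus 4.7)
The plan is to apply Proposition~\ref{prop:restvol_voltrace} to the K\"ahler-perturbed classes $\{\theta+\varepsilon\omega\}$ and then pass to the limit $\varepsilon \searrow 0$. First, I would verify that $Y \not\subset \mathrm{nK}(\{\theta+\varepsilon\omega\})$ for every $\varepsilon>0$: applying \cref{thm:Demailly} to $V_\theta$ produces $u_k^D \in \PSH(X,\theta+\varepsilon_k\omega)$ with analytic singularities and $\nu(u_k^D,\cdot) \leq \nu(V_\theta,\cdot) + n/2^k$, and since $\nu(V_\theta,Y)=0$ by hypothesis, for $k$ large the analytic singular variety of $u_k^D$ cannot contain $Y$, so $u_k^D$ is smooth on a dense open subset of $Y$; for $\varepsilon > \varepsilon_k$ one has $u_k^D \in \PSH_>(X,\theta+\varepsilon\omega)$, witnessing the claim. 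Then \cref{prop:restvol_voltrace} applied to $\{\theta+\varepsilon\omega\}$ yields
\[
\vol_{X|Y}(\{\theta+\varepsilon\omega\}) = \int_Y (\theta+\varepsilon\omega+\ddc V_{\theta+\varepsilon\omega})|_Y^m = \int_Y(\theta|_Y+\varepsilon\omega|_Y+\ddc \Rest_Y^{\theta+\varepsilon\omega}(V_{\theta+\varepsilon\omega}))^m,
\]
and combined with $\vol_{X|Y}(\{\theta\}) = \lim_\varepsilon \vol_{X|Y}(\{\theta+\varepsilon\omega\})$ coming from Matsumura's formula \eqref{eq: rest_vol_Mat_def} (now legal for each perturbed class) and the Collins--Tosatti definition \eqref{eq: rest_vol_CT_def}, this establishes the first two equalities of \eqref{eq: non-nef_rest_vol}.

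Denote by $A_\varepsilon$ the common value above. The last equality of \eqref{eq: non-nef_rest_vol} requires $\lim_\varepsilon A_\varepsilon = \int_Y(\theta|_Y+\ddc \Tr_Y^\theta(V_\theta))^m$. The lower bound $\liminf_\varepsilon A_\varepsilon \geq \int_Y(\theta|_Y+\ddc \Tr_Y^\theta(V_\theta))^m$ is straightforward: since $V_\theta \preceq_\mathcal I V_{\theta+\varepsilon\omega}$, \cref{lem: trace_vol_prop}(i) gives $\Rest_Y^{\theta+\varepsilon\omega}(V_\theta) \leq \Rest_Y^{\theta+\varepsilon\omega}(V_{\theta+\varepsilon\omega})$, and \cref{lem: trace_vol_prop}(ii) identifies $\lim_\varepsilon \int_Y(\theta|_Y+\varepsilon\omega|_Y+\ddc \Rest_Y^{\theta+\varepsilon\omega}(V_\theta))^m$ with the target quantity.

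The reverse inequality $\limsup_\varepsilon A_\varepsilon \leq \int_Y(\theta|_Y+\ddc \Tr_Y^\theta(V_\theta))^m$ is the main obstacle. Fix $\varepsilon_0>0$ and take $\varepsilon_j \leq \varepsilon_0/2$. Since \cref{def: trace_general} presents $\Tr_Y^{\theta+\varepsilon_j\omega}(V_{\theta+\varepsilon_j\omega})$ as the decreasing limit of $\Rest_Y^{\theta+(\varepsilon_j+\delta)\omega}(V_{\theta+\varepsilon_j\omega})$ as $\delta \searrow 0$, taking $\delta=\varepsilon_0-\varepsilon_j$ gives $\Tr_Y^{\theta+\varepsilon_j\omega}(V_{\theta+\varepsilon_j\omega}) \leq \Rest_Y^{\theta+\varepsilon_0\omega}(V_{\theta+\varepsilon_j\omega})$, and combined with the trivial bound $\int_Y(\theta|_Y+\varepsilon_0\omega|_Y+\ddc v)^m \geq \int_Y(\theta|_Y+\varepsilon_j\omega|_Y+\ddc v)^m$ (coming from binomial expansion with the smooth positive form $(\varepsilon_0-\varepsilon_j)\omega|_Y$) this yields
\[
A_{\varepsilon_j} \leq \int_Y(\theta|_Y+\varepsilon_0\omega|_Y+\ddc \Rest_Y^{\theta+\varepsilon_0\omega}(V_{\theta+\varepsilon_j\omega}))^m.
\]
To pass $\varepsilon_j \searrow 0$ with $\varepsilon_0$ fixed, I would invoke \cref{lem: trace_decreasing_convergence}, whose key hypothesis $P_{\theta+\varepsilon_0\omega}[V_{\theta+\varepsilon_j\omega}]_\mathcal I \searrow P_{\theta+\varepsilon_0\omega}[V_\theta]_\mathcal I$ is the main technical point: since $V_{\theta+\varepsilon_j\omega}$ is $(\theta+\varepsilon_j\omega)$-model (as a maximal envelope) and decreases to $V_\theta$ with $\int_X\theta_{V_\theta}^n>0$, \cref{prop: vol_limit_model} yields $\nu(V_{\theta+\varepsilon_j\omega},E) \nearrow \nu(V_\theta,E)$ on every prime divisor $E$ over $X$, so the Lelong-number characterization of $\preceq_\mathcal I$ from \cite[Corollary~2.16]{DX22} forces the decreasing limit of these $\mathcal I$-envelopes to coincide with $P_{\theta+\varepsilon_0\omega}[V_\theta]_\mathcal I$ (both being $\mathcal I$-model with identical Lelong numbers along every divisor). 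The lemma then gives $\Rest_Y^{\theta+\varepsilon_0\omega}(V_{\theta+\varepsilon_j\omega}) \searrow \Rest_Y^{\theta+\varepsilon_0\omega}(V_\theta)$; the corresponding volumes converge by \cite[Theorem~1.1]{WN19}, and letting $\varepsilon_0 \searrow 0$ via \cref{lem: trace_vol_prop}(ii) applied to $u=V_\theta$ completes the argument.
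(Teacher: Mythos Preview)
Your proof is correct and follows the same overall architecture as the paper's: apply \cref{prop:restvol_voltrace} to each perturbed class to get the first two equalities, then sandwich the trace volume of $V_\theta$ between limits to obtain the last one. The one substantive difference is how you verify the hypothesis $P_{\theta+\varepsilon_0\omega}[V_{\theta+\varepsilon_j\omega}]_\mathcal I \searrow P_{\theta+\varepsilon_0\omega}[V_\theta]_\mathcal I$ needed for \cref{lem: trace_decreasing_convergence}. You argue this via the Lelong-number convergence in \cref{prop: vol_limit_model} combined with the divisorial characterization of $\preceq_\mathcal I$ from \cite[Corollary~2.16]{DX22}; this is clean and self-contained. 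The paper instead uses \cite[Lemma~4.3]{DDNLmetric} to obtain a convex combination $(1-\alpha_\varepsilon)V_{\theta+\varepsilon\omega}+\alpha_\varepsilon w_\varepsilon \leq V_\theta$, applies concavity of $P_{\theta+\varepsilon_0\omega}[\cdot]$ to get convergence of the $P[\cdot]$-envelopes, and then invokes \cite[Theorem~3.8]{DX21} (via an auxiliary approximation from \cite{Gup24}) to identify $P[\cdot]$ with $P[\cdot]_\mathcal I$ for these particular potentials. Your route sidesteps that last identification and is arguably more direct; the paper's route has the minor advantage of also yielding the convergence of the non-$\mathcal I$ envelopes along the way. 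One small slip: the volume convergence for the decreasing model potentials $\Rest_Y^{\theta+\varepsilon_0\omega}(V_{\theta+\varepsilon_j\omega})$ follows from \cref{prop: vol_limit_model}, not from \cite[Theorem~1.1]{WN19}, which only provides monotonicity.
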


The main point is the last equality, the first equality is already obtained in \cite[Remark 2.2]{CT22}. 

\begin{proof} Since $\nu(V_\theta, Y)=0$, $Y$ is not contained in the non-K\"ahler locus of $\{\theta + \varepsilon \omega\}$ for all $\varepsilon>0$. As a result, due to \eqref{eq: rest_vol_CT_def} and \eqref{eq:volrest} only the last equality of \eqref{eq: non-nef_rest_vol} needs to be argued.

For any $\varepsilon_0 >0$, since $V_\theta \leq V_{\theta + \varepsilon_0 \omega}$,  due to Lemma \ref{lem: trace_vol_prop}(i) we have
\begin{equation}\label{eq: easy_est}
\Rest^\theta_Y(V_\theta) = \lim _{\varepsilon \searrow 0} \Rest^{\theta + \varepsilon \omega}_Y(V_\theta) \leq \Rest^{\theta + \varepsilon_0 \omega}_Y(V_\theta) \leq \Rest^{\theta + \varepsilon_0 \omega}_Y(V_{\theta + \varepsilon_0 \omega}).
\end{equation}
By Proposition~\ref{prop: vol_limit_model} we have that $\int_X (\theta + \varepsilon \omega)^n_{V_{\theta + \varepsilon \omega}} \searrow \int_X \theta_{V_\theta}^n$ as $\epsilon\searrow 0$. Since 
\[
\int_X (\theta + \varepsilon \omega)^n_{V_{\theta + \varepsilon \omega}} \geq \int_X (\theta + \varepsilon \omega)_{V_\theta}^n\geq  \int_X \theta_{V_\theta}^n>0, 
\]
for $\varepsilon>0$ small enough \cite[Lemma~4.3]{DDNLmetric}  implies the existence of $w_\varepsilon \in \PSH(X,\theta + \varepsilon \omega)$ and $\alpha_\varepsilon \searrow 0$ such that
\[
(1-\alpha_\varepsilon)V_{\theta + \varepsilon \omega} + \alpha_\varepsilon w_\varepsilon \leq V_\theta.
\]

Fixing $\varepsilon_0>0$, we apply $P_{\theta + \varepsilon_0 \omega}[\cdot]$ for the potentials on both sides of the above inequality for $\varepsilon < \varepsilon_0$. Using concavity of this operator, we obtain that 
\[
(1-\alpha_\varepsilon)P_{\theta + \varepsilon_0 \omega}[V_{\theta + \varepsilon \omega}] + \alpha_\varepsilon P_{\theta + \varepsilon_0 \omega}[w_\varepsilon] \leq P_{\theta + \varepsilon_0 \omega}[V_\theta] \leq P_{\theta + \varepsilon_0 \omega}[V_{\theta + \varepsilon \omega}].
\]
Letting $\varepsilon \searrow 0$ we obtain that 
\begin{equation}\label{eq: P_I_limit}
P_{\theta + \varepsilon_0 \omega}[V_{\theta + \varepsilon \omega}]_\mathcal I= P_{\theta + \varepsilon_0 \omega}[V_{\theta + \varepsilon \omega}] \searrow P_{\theta + \varepsilon_0 \omega}[V_{\theta}] = P_{\theta + \varepsilon_0 \omega}[V_{\theta}]_\mathcal I,
\end{equation}
where the first and last equalities follow from \cite[Theorem~3.8]{DX21}. Indeed, there exists an increasing sequence $\chi_j^\varepsilon \in \PSH_>(X,\theta + \varepsilon_0 \omega)$ with analytic singularity type such that $\chi_j^\varepsilon \nearrow V_{\theta + \varepsilon \omega}$ (see the beginning of \cite[Section~4]{Gup24} for this construction). Due to \cite[Lemma~4.1]{DDNLmetric}, we have that $d^{\theta + \varepsilon_0 \omega}_\mathcal S([\chi^\varepsilon_j],[V_{\theta + \varepsilon \omega}]) \to 0$ (we refer to \cite{DDNLmetric} for the definition and properties of $d_{\mathcal{S}}$), hence 
 \cite[Theorem~3.8]{DX21} is indeed applicable.

Using \eqref{eq: P_I_limit} we can apply Lemma~\ref{lem: trace_decreasing_convergence}, to conclude that $\Rest_Y^{\theta + \varepsilon_0 \omega}(V_{\theta + \varepsilon \omega})  \searrow \Rest_Y^{\theta + \varepsilon_0 \omega}(V_{\theta})$ as $\varepsilon \searrow 0$. Using Proposition~\ref{prop: vol_limit_model} and Lemma~\ref{lem: trace_vol_prop}(ii) 
 we get that 
\[
\int_Y (\theta|_Y + \varepsilon_0 \omega|_Y)^m_{\Rest_Y^{\theta + \varepsilon_0 \omega}(V_{\theta + \varepsilon \omega}) } \searrow \int_Y (\theta|_Y + \varepsilon_0 \omega|_Y)^m_{\Rest_Y^{\theta + \varepsilon_0 \omega}(V_{\theta})} \geq \int_Y (\theta|_Y)^m_{\Rest_Y^{\theta}(V_{\theta})}  \ \ \textup{ as } \varepsilon \searrow 0.
\]
Due to Lemma~\ref{lem: trace_vol_prop}(ii) again, $\int_Y (\theta|_Y + \varepsilon_0 \omega|_Y)^m_{\Rest_Y^{\theta + \varepsilon_0 \omega}(V_{\theta})} \searrow \int_Y (\theta|_Y)^m_{\Rest_Y^{\theta}(V_{\theta})}$ as $\varepsilon_0 \searrow 0$. 
This implies that for some sequence $\tilde \epsilon_j$ satisfying $\frac{1}{j} \geq \tilde \epsilon_j \searrow 0$ we have that 
\begin{equation}\label{eq: limit_1}
\int_Y \left(\theta|_Y + \frac{1}{j} \omega|_Y\right)^m_{\Rest_Y^{\theta + \frac{1}{j} \omega}(V_{\theta + \tilde \varepsilon_j \omega}) } \searrow  \int_Y (\theta|_Y)^m_{\Rest_Y^{\theta}(V_{\theta})}  \quad \textup{ as } j \to \infty.
\end{equation}
Moreover, due to Lemma~\ref{lem: trace_vol_prop}(ii) (applied to $u = V_{\theta + \tilde \varepsilon_j \omega}$) and  \eqref{eq: easy_est} we have
\begin{equation}\label{eq: limit_2}
\int_Y \left(\theta|_Y + \frac{1}{j} \omega|_Y\right)^m_{\Rest_Y^{\theta|_Y + \frac{1}{j} \omega|_Y}(V_{\theta + \tilde \varepsilon_j \omega}) } \geq \int_Y \left(\theta|_Y + \tilde \varepsilon_j \omega|_Y\right)^m_{\Rest_Y^{\theta + \tilde \varepsilon_j\omega}(V_{\theta + \tilde \varepsilon_j \omega}) } \geq \int_Y (\theta|_Y)^m_{\Rest_Y^{\theta}(V_{\theta})}.
\end{equation}
Putting \eqref{eq: limit_1} and \eqref{eq: limit_2} together, it results that 
\[
c_j \coloneqq \int_Y (\theta|_Y + \tilde \varepsilon_j \omega|_Y)^m_{\Rest_Y^{\theta + \tilde \varepsilon_j \omega}(V_{\theta + \tilde \varepsilon_j \omega}) } \searrow \int_Y (\theta|_Y)^m_{\Rest_Y^{\theta}(V_{\theta})}. 
\]
As $\lim_{j \to \infty} c_j = \vol_{Y|X}(\{\theta\})$, due to the first two identities of \eqref{eq: non-nef_rest_vol}, the proof is finished.
\end{proof}

\paragraph{An $L^2$ extension theorem for the trace operator.}
The trace operator allows to obtain the following sheaf theoretic $L^2$ extension theorem:
\begin{theorem}\label{thm:OT}
    Let $u\in \PSH(X,\theta)$ with $\nu(u,Y)=0$. Then for any $\lambda>0$, we have
    \begin{equation}\label{eq:OT}
        \mathcal{I}(\lambda \Rest_Y^\theta(\varphi))\subseteq \mathcal{I}(\lambda \varphi)|_Y.
    \end{equation}
\end{theorem}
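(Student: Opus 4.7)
The strategy is to apply the classical sheaf-theoretic Ohsawa--Takegoshi extension theorem to Demailly's quasi-equisingular approximants of $u$, where the small exponent slack provided by the Guan--Zhou strong openness theorem is exactly what the quasi-equisingular property in \cref{def:equising}(iii) needs in order to replace the approximant by $u$.

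Let $u_j^D \in \PSH(X, \theta + \varepsilon_j \omega)$ be the quasi-equisingular approximation of $u$ furnished by \cref{thm:Demailly}: so $\varepsilon_j \searrow 0$, $u_j^D \searrow u$, each $u_j^D$ has analytic singularity type, and $\nu(u_j^D, Y) \leq \nu(u, Y) = 0$. The preliminary step is to establish that for every $j$
\[
\Rest_Y^\theta(u) \preceq u_j^D|_Y.
\]
Viewing $\{u_j^D\}_j$ as a quasi-equisingular approximation of $u \in \PSH_>(X, \theta + \varepsilon_j \omega)$ (a K\"ahler current after adding $\varepsilon_j \omega$), the decreasing-limit description provided by \cref{lem: indep_quasi_Tr}, combined with \cref{def: trace_general}, gives
\[
\Rest_Y^\theta(u) \leq \Rest_Y^{\theta + \varepsilon_j \omega}(u) \leq P_{(\theta + \varepsilon_j \omega)|_Y}[u_j^D|_Y] \preceq u_j^D|_Y,
\]
where the last step uses the analytic singularity type of $u_j^D|_Y$ as in \cref{ex:analytic}.

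Now fix $x \in Y$ and let $f$ be a germ of $\mathcal I(\lambda \Rest_Y^\theta(u))$ at $x$. By the Guan--Zhou strong openness theorem \cite{GZ15}, there exists $\eta > 0$ such that $f$ is a germ of $\mathcal I((1+\eta)\lambda \Rest_Y^\theta(u))$ at $x$. Combined with $\Rest_Y^\theta(u) \preceq u_j^D|_Y$, this places $f$ in $\mathcal I((1+\eta)\lambda u_j^D|_Y)_x$ for every $j$. The classical sheaf-theoretic Ohsawa--Takegoshi extension theorem, applicable since $u_j^D|_Y \not\equiv -\infty$, gives
\[
\mathcal I\bigl((1+\eta)\lambda u_j^D|_Y\bigr) \subseteq \mathcal I\bigl((1+\eta)\lambda u_j^D\bigr)\big|_Y,
\]
and finally the quasi-equisingular property (\cref{def:equising}(iii) applied with the pair of exponents $\lambda' = (1+\eta)\lambda > \lambda$) yields, for all $j$ sufficiently large,
\[
\mathcal I\bigl((1+\eta)\lambda u_j^D\bigr) \subseteq \mathcal I(\lambda u).
\]
Chaining these three inclusions produces $f \in \mathcal I(\lambda u)|_Y$ near $x$, which is exactly \eqref{eq:OT}.

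The only genuinely subtle point is the inequality $\Rest_Y^\theta(u) \preceq u_j^D|_Y$: when $\theta_u$ is not a K\"ahler current, the trace is defined through a second decreasing limit over $\varepsilon \searrow 0$ (\cref{def: trace_general}), so one must tether the outer parameter $\varepsilon$ to the approximation index $j$ (as we did via $\varepsilon = \varepsilon_j$) in order to retain the analytic structure that makes the envelope on the right-hand side equivalent to $u_j^D|_Y$. The rest of the argument is essentially a bookkeeping of the three inclusions above.
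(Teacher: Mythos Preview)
Your proof is correct and follows essentially the same approach as the paper: both arguments establish $\Rest_Y^\theta(u) \preceq u_j^D|_Y$, then chain the classical sheaf-theoretic Ohsawa--Takegoshi inclusion for $u_j^D$ with the quasi-equisingular property \cref{def:equising}(iii), using Guan--Zhou strong openness to bridge the gap in exponents. The only cosmetic difference is that the paper fixes $\lambda'>\lambda$ first and applies strong openness at the end by letting $\lambda'\searrow\lambda$, whereas you apply strong openness germ-wise at the outset to produce the slack $\eta$; and you unpack the inequality $\Rest_Y^\theta(u)\preceq u_j^D|_Y$ more explicitly than the paper, which simply cites \cref{def: trace_general}.
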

The restriction $\mathcal{I}(\lambda \varphi)|_Y$ is the inverse image ideal sheaf  given by $\mathcal{I}(\lambda \varphi)/(\mathcal{I}(\lambda \varphi)\cap \mathcal{I}_Y)$, where $\mathcal{I}_Y$ is the ideal sheaf defining $Y$. In other words,  $\mathcal{I}(\lambda \varphi)|_Y $ is the sheaf of holomorphic functions on $Y$ with local extensions in  $\mathcal{I}(\lambda \varphi)$, and it is different from the pull-back of $\mathcal{I}(\lambda \varphi)$ to $Y$ (see \cite[Page~163]{Har} for a detailed discussion).

\begin{proof} 
Let $u_j\in \PSH(X,\theta + \varepsilon_j \omega)$ be the quasi-equisingular approximation of $u$ from \cref{thm:Demailly}. From \cref{def: trace_general} we get that $\Rest_Y^\theta(u) \preceq u_j|_Y$.
For any $\lambda'>\lambda>0$, we can find $j>0$ so that
    \[
    \mathcal{I}(\lambda' u_j)\subseteq \mathcal{I}(\lambda u).
    \]
    By the Ohsawa--Takegoshi extension theorem applied to $\varphi_j$ \cite[Theorem~14.1]{Dem12}, we have
    \[
    \mathcal{I}(\lambda' \Rest_Y^\theta(u))\subseteq \mathcal{I}(\lambda' u_j|_Y)\subseteq \mathcal{I}(\lambda'u_j)|_Y\subseteq \mathcal{I}(\lambda u)|_Y.
    \]
    Letting $\lambda'\to \lambda$ and applying the strong openness theorem of Guan--Zhou \cite{GZ15} to $\Rest_Y^\theta(u)$, we conclude \eqref{eq:OT}.
\end{proof}

\section{Applications to restricted volumes}

In this section we assume that $X$ is a connected projective manifold of dimension $n$. 
Let $A$ be a very ample line bundle on $X$. Take a smooth positive Hermitian metric $g$ on $A$ and let $\omega=-\ddc \log g=\frac{\mathrm{i}}{2\pi}\Theta_g>0$ be our background K\"ahler form.

We also fix a big line bundle $L$ with a smooth Hermitian metric $h$. Let $\theta=-\ddc \log h=\frac{\mathrm{i}}{2\pi}\Theta_h$ be the curvature form. We will consider a singular psh metric $h\mathrm{e}^{-u}$ on $L$ with $u\in \PSH(X,\theta)$. 
We also fix an arbitrary twisting line bundle $T$ on $X$.

Let $Y \subset X$ be a connected $m$-dimensional complex submanifold. 
When $Y\neq X$, we can always find $\psi_Y\in \QPSH(X)$ such that $\{\psi_Y = -\infty\} = Y$, $\psi_Y$ has neat analytic singularity type and is log canonical. As elaborated in \cite[Lemma~2.3]{Fin22}, we can choose $\psi_Y$ so that in a neighborhood of $Y$ we have
\begin{equation}\label{eq: Psi_Y_def}
\psi_Y(x) = 2(n-m) \log \textup{dist}(x, Y),
\end{equation}
for some Riemannian distance function $\textup{dist}(\cdot, Y)$.
For more details we refer to \cite{Fin22}. This choice of $\psi_Y$ allows interpreting $\mathcal I_Y$, the sheaf of holomorphic functions vanishing along $Y$, as a multiplier ideal sheaf:

\begin{lemma}\label{lma:IpsiY}
Assume that $Y\neq X$.
    The multiplier ideal sheaf of $\psi_Y$ can be calculated as
    \begin{equation}\label{eq:mis_psi}
    \mathcal{I}(\psi_Y)=\mathcal{I}_Y.
    \end{equation}
    Moreover, given $y\in Y$ and $\epsilon>0$, for any germ $f\in \mathcal{I}_{Y,y}$ we have
    \begin{equation}\label{eq:integrabilitypsiY}
    \int_U |f|^{\epsilon}\mathrm{e}^{-\psi_Y}\omega^n<\infty,
    \end{equation}
    where $U$ is an open neighbourhood of $y$ in $X$.
\end{lemma}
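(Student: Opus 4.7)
The plan is to work locally near an arbitrary point $y\in Y$. Choose holomorphic coordinates $(z',w)=(z_1,\dots,z_m,w_1,\dots,w_{n-m})$ on a small neighborhood $U$ of $y$ such that $Y\cap U = \{w=0\}$. Since the Riemannian distance to the submanifold $Y$ is comparable, in such coordinates, to the Euclidean distance in the transverse direction, there exist constants $c,C>0$ on a possibly smaller $U$ so that $c|w|\leq \mathrm{dist}(x,Y)\leq C|w|$. From \eqref{eq: Psi_Y_def} this gives
\[
\psi_Y(x) = (n-m)\log |w|^2 + O(1) \qquad \text{on } U.
\]
Thus $\mathrm{e}^{-\psi_Y}\asymp |w|^{-2(n-m)}$ on $U$, and this is the input for both integrability statements. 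Everything away from $Y$ is trivial since $\psi_Y$ is then locally bounded, so $\mathcal{I}(\psi_Y) = \mathcal{O}_X = \mathcal{I}_Y$ off $Y$.

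For the inclusion $\mathcal{I}_Y\subseteq \mathcal{I}(\psi_Y)$, a germ $f\in \mathcal{I}_{Y,y}$ decomposes as $f=\sum_{j=1}^{n-m} w_j g_j$ for some holomorphic $g_j$, whence $|f|^2 \leq C|w|^2$ on $U$. Therefore
\[
\int_U |f|^2 \mathrm{e}^{-\psi_Y}\,\omega^n \;\leq\; C'\int_U |w|^{2-2(n-m)}\,dV(z',w),
\]
and passing to polar coordinates in $w$ the radial integrand becomes $r^{2-2(n-m)}\cdot r^{2(n-m)-1}\,dr = r\,dr$, which is integrable at $0$. For the reverse inclusion, if $f\notin \mathcal{I}_{Y,y}$ then $f(\,\cdot\,,0)\not\equiv 0$, so I can shrink $U$ around a point where $|f|\geq c>0$; the radial integrand becomes $r^{-1}\,dr$, which diverges. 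This proves \eqref{eq:mis_psi}.

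For \eqref{eq:integrabilitypsiY}, applying the bound $|f|\leq C|w|$ from $f\in \mathcal{I}_{Y,y}$ gives
\[
\int_U |f|^{\epsilon}\mathrm{e}^{-\psi_Y}\,\omega^n \;\leq\; C''\int_U |w|^{\epsilon - 2(n-m)}\,dV(z',w),
\]
and in polar coordinates in $w$ the radial integrand is $r^{\epsilon-1}\,dr$, which is integrable near $0$ for every $\epsilon>0$.

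There is no substantial obstacle; the only point requiring a little care is the justification that the Riemannian distance to $Y$ is comparable to $|w|$ in transverse holomorphic coordinates, which is standard (e.g.\ by working in normal coordinates along $Y$ for the Kähler metric), and the compatibility of the $O(1)$ perturbation in $\psi_Y$ with exponentiation, which does not affect any of the integrals above.
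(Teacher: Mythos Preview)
Your proof is correct and follows essentially the same approach as the paper: both localize in transverse coordinates, rewrite $\mathrm{e}^{-\psi_Y}$ as $|w|^{-2(n-m)}$ up to bounded factors, decompose $f=\sum w_j g_j$ for the inclusion $\mathcal{I}_Y\subseteq\mathcal{I}(\psi_Y)$, and check integrability by a direct radial/Fubini computation. The only minor difference is in the reverse inclusion: you argue divergence near a nearby point of $Y$ where $f$ does not vanish, while the paper phrases this via Fubini plus R\"uckert's Nullstellensatz; both are equivalent here since $Y$ is smooth.
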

\begin{proof}
    Since $\psi_Y$ is locally bounded away from $Y$, it suffices to prove \eqref{eq:mis_psi} along $Y$. Fix $y\in Y$, and we will verify \eqref{eq:mis_psi} germ-wise at $y$.

Take an open neighbourhood $U \subset X$ of $y$ and a biholomorphic map $F\colon U\rightarrow V\times W$, where $V$ is an open neighbourhood of $y$ in $Y$ and $W$ is a connected open subset in $\mathbb{C}^{n-m}$ containing $0$, such that $F(Y\cap U)=V\times\{0\}$. For any $x\in U$, write $x_V,x_W$ for the two components of $F(x)$ in $V$ and $W$ respectively. We denote the coordinates in $\mathbb{C}^{n-m}$ as $w_1,\ldots,w_{n-m}$.
    
   Due to \eqref{eq: Psi_Y_def}, after possibly shrinking $U$, we may assume that 
    \[
    \exp(-\psi_Y(x))=|x_W|^{2m-2n}+\mathcal{O}(1)
    \]
    for any $x\in U\setminus Y$.
    
    Given $f\in \mathcal{I}_{Y,y}$, after shrinking $U$, we may assume that there exists $g_1,\ldots,g_{n-m}\in H^0(V\times W,\mathcal{O}_{V\times W})$ such that
    \[
    f=\sum_{i=1}^{n-m} w_i g_i.
    \]
    In order to verify $f\in \mathcal{I}(\psi_Y)_y$, it suffices to show $w_ig_i \in \mathcal{I}\left( (\sum_{i=1}^{n-m} |w_i|^2)^{m-n}\right)_{F(y)}$, which follows from Fubini's theorem. The proof of \eqref{eq:integrabilitypsiY} is similar.

    Conversely, take $f\in \mathcal{I}(\psi_Y)$, the similar application of Fubini's theorem shows that after possible shrinking $U$, we have $f|_{Y}=0$. By Rückert's Nullstellensatz \cite[Page~67]{CAS}, it follows that $f\in \mathcal{I}_Y$.
\end{proof}

The following Ohsawa--Takegoshi extension result is a special case of \cite[Theorem~1.4]{His12}. Using an approximation argument, it can also be derived from \cite[Theorem 2.8]{Dem16ext}.

\begin{theorem}\label{thm: OT_ext} Suppose that $u \in \PSH(X,\theta - \delta \omega)$ for some $\delta>0$ and $u|_Y\not\equiv -\infty$. Fix a Hermitian metric $r$ on $T$.
Then there exists $k_0(\delta,r)>0$ such that for all $k \geq k_0$ and  $s \in H^0(Y, \mathcal{O}_Y(L)^k \otimes \mathcal{O}_Y(T) \otimes \mathcal I(ku|_Y))$, there exists an extension $\tilde s \in H^0(X, \mathcal{O}_X(L)^k \otimes \mathcal{O}_X(T)  \otimes \mathcal I(ku))$ such that
\[
\int_X (h^k\otimes r)(\tilde s,\tilde s) \mathrm{e}^{-ku}\,\omega^n \leq  C \int_Y (h^k\otimes r)(s,s) \mathrm{e}^{-ku|_Y}\,\omega|_Y^m,
\]
where $C > 0$ is an absolute constant, independent of the data $(u,s,k)$.
\end{theorem}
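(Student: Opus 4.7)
The idea is to derive the statement from the classical Ohsawa--Takegoshi extension theorem applicable to weights with analytic singularity type (e.g.\ \cite[Theorem~14.1]{Dem12}), by approximating $u$ through Demailly's quasi-equisingular approximation (Theorem~\ref{thm:Demailly} together with Remark~\ref{rmk:Demapp}) and passing to the limit via a compactness argument in the finite-dimensional space $H^0(X,\mathcal{O}_X(L)^k\otimes \mathcal{O}_X(T))$.

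First, since $\theta_u \geq \delta \omega$, Remark~\ref{rmk:Demapp} furnishes a decreasing quasi-equisingular approximation $u_j^D \in \PSH(X,\theta)$ of $u$ with analytic singularity type and $\theta + \ddc u_j^D \geq \tfrac{\delta}{2}\omega$ for all large $j$. From $u \leq u_j^D$ it follows that $\mathcal{I}(ku|_Y)\subseteq \mathcal{I}(ku_j^D|_Y)$, so any $s \in H^0(Y,\mathcal{O}_Y(L)^k\otimes \mathcal{O}_Y(T)\otimes \mathcal{I}(ku|_Y))$ is also a section of $\mathcal{O}_Y(L)^k\otimes \mathcal{O}_Y(T)\otimes \mathcal{I}(ku_j^D|_Y)$, and $u|_Y\not\equiv -\infty$ gives $u_j^D|_Y\not\equiv -\infty$. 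Choose $k_0(\delta,r)$ large enough that $k\cdot\tfrac{\delta}{2}\omega$ dominates the bounded negative part of the curvature of $r$ for every $k\geq k_0$; then on $L^k\otimes T$ equipped with the metric $h^k\otimes r \cdot \mathrm{e}^{-ku_j^D}$ the curvature positivity required by the classical Ohsawa--Takegoshi extension theorem is satisfied. Its application produces $\tilde s_j\in H^0(X,\mathcal{O}_X(L)^k\otimes \mathcal{O}_X(T)\otimes \mathcal{I}(ku_j^D))$ extending $s$ with
\[
\int_X (h^k\otimes r)(\tilde s_j,\tilde s_j)\,\mathrm{e}^{-ku_j^D}\,\omega^n \leq C \int_Y (h^k\otimes r)(s,s)\,\mathrm{e}^{-ku_j^D|_Y}\,\omega|_Y^m \leq C \int_Y (h^k\otimes r)(s,s)\,\mathrm{e}^{-ku|_Y}\,\omega|_Y^m,
\]
where $C$ depends only on the ambient data $(X,Y,\omega)$ (not on $(u,s,k,j)$), and the second inequality uses $u\leq u_j^D$.

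Finally, I pass to the limit in $j$. Since $\sup_X u_j^D \leq \sup_X u_1^D <\infty$, the preceding estimate bounds $\int_X (h^k\otimes r)(\tilde s_j,\tilde s_j)\,\omega^n$ uniformly in $j$; as $H^0(X,\mathcal{O}_X(L)^k\otimes \mathcal{O}_X(T))$ is finite-dimensional, I extract a subsequence $\tilde s_j\to \tilde s$, for which $\tilde s|_Y = s$ holds automatically. Combining $u_j^D\searrow u$, i.e.\ $\mathrm{e}^{-ku_j^D}\nearrow \mathrm{e}^{-ku}$ pointwise, with $\tilde s_j\to \tilde s$ and Fatou's lemma, I obtain
\[
\int_X (h^k\otimes r)(\tilde s,\tilde s)\,\mathrm{e}^{-ku}\,\omega^n \leq \liminf_{j\to\infty}\int_X (h^k\otimes r)(\tilde s_j,\tilde s_j)\,\mathrm{e}^{-ku_j^D}\,\omega^n \leq C\int_Y (h^k\otimes r)(s,s)\,\mathrm{e}^{-ku|_Y}\,\omega|_Y^m,
\]
which in particular forces $\tilde s\in \mathcal{I}(ku)$, completing the argument.

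The main obstacle is that the OT estimates obtained at each approximation level involve the weight $\mathrm{e}^{-ku_j^D}$, which is \emph{smaller} than the target weight $\mathrm{e}^{-ku}$ (because $u_j^D\geq u$), so the finite-level bounds are a priori weaker than required. This mismatch is resolved exactly in the limit: the monotone \emph{increase} $\mathrm{e}^{-ku_j^D}\nearrow \mathrm{e}^{-ku}$ combined with the finite-dimensionality of the ambient space of holomorphic sections upgrades, via Fatou's lemma, the weak approximate bounds to the sharp estimate with respect to the full weight $\mathrm{e}^{-ku}$.
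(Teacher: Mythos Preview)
The paper does not actually prove this theorem: it simply records that the statement is a special case of \cite[Theorem~1.4]{His12} (which already handles arbitrary psh weights with the stated positivity, so no approximation is needed), and remarks that alternatively one can derive it by an approximation argument from \cite[Theorem~2.8]{Dem16ext}. Your proposal spells out precisely this second route, and the argument is sound: the uniform lower bound $\theta_{u_j^D}\geq \tfrac{\delta}{2}\omega$ guarantees that both $k_0$ and the Ohsawa--Takegoshi constant $C$ are independent of $j$, the finite-dimensionality of $H^0(X,\mathcal{O}_X(L)^k\otimes\mathcal{O}_X(T))$ gives the needed compactness, and the Fatou step is legitimate since $\tilde s_j\to\tilde s$ uniformly and $\mathrm{e}^{-ku_j^D}\nearrow \mathrm{e}^{-ku}$ pointwise.

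One small bibliographic point: the reference \cite[Theorem~14.1]{Dem12} that you invoke (and that the paper uses in the proof of \cref{thm:OT}) is the \emph{local} sheaf-level Ohsawa--Takegoshi statement $\mathcal{I}(\varphi|_Y)\subseteq \mathcal{I}(\varphi)|_Y$; for the global $L^2$-extension with estimate at each approximation level you need a global version on compact K\"ahler manifolds, which is exactly what \cite[Theorem~2.8]{Dem16ext} (or the Manivel--Demailly formulation) provides. Once you swap in that citation, your write-up is a complete proof along the approximation route the paper alludes to.
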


In the next lemma, we prove \eqref{eq:restr_volume_intr_2} for potentials with analytic singularity type:

\begin{lemma}\label{lem: analytic_formula} Assume that $u$ has analytic singularity type and $\theta_u$ is a K\"ahler current. Suppose that $u|_Y \not\equiv -\infty$. Then
\begin{equation}\label{eq:asymanasing}
\int_Y (\theta|_Y+\ddc u|_Y)^m = \lim_{k \to \infty} \frac{m!}{k^m} \dim_{\mathbb C}\left\{ s|_Y:  s\in H^0(X,\mathcal{O}_X(L)^k \otimes \mathcal{O}_X(T) \otimes \mathcal I(k u))\right\}.
\end{equation}
\end{lemma}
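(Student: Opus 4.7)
The plan is to sandwich the dimension
\[
N_k \coloneqq \dim_{\mathbb C}\bigl\{s|_Y : s \in H^0(X, \mathcal O_X(L)^k \otimes \mathcal O_X(T) \otimes \mathcal I(ku))\bigr\}
\]
between two spaces of sections on $Y$, and then show both bounds share the desired asymptotic. First observe that since $\theta_u \geq \delta \omega$ for some $\delta>0$ and $u|_Y \not\equiv -\infty$, restricting yields that $u|_Y$ has analytic singularity type on $Y$ with $\theta|_Y + \ddc u|_Y \geq \delta\omega|_Y$, i.e.\ a K\"ahler current. So the hypotheses of the lemma are inherited by $u|_Y$ on $Y$.

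For the lower bound, I apply \cref{thm: OT_ext} with $u \in \PSH(X,\theta - \delta\omega)$: for every $k \geq k_0$, each $s\in H^0(Y, \mathcal O_Y(L)^k\otimes \mathcal O_Y(T)\otimes \mathcal I(ku|_Y))$ admits an extension $\tilde s\in H^0(X, \mathcal O_X(L)^k\otimes \mathcal O_X(T)\otimes \mathcal I(ku))$, giving
\[
h^0(Y, \mathcal O_Y(L)^k\otimes \mathcal O_Y(T)\otimes \mathcal I(ku|_Y)) \leq N_k.
\]
For the upper bound, by the very definition of the inverse image sheaf, restriction maps into
$H^0(Y, \mathcal O_Y(L)^k\otimes \mathcal O_Y(T)\otimes \mathcal I(ku)|_Y)$, so
\[
N_k \leq h^0(Y, \mathcal O_Y(L)^k\otimes \mathcal O_Y(T)\otimes \mathcal I(ku)|_Y).
\]

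Next I apply the classical Bonavero-type asymptotic Riemann--Roch on $Y$ to the analytic singularity $u|_Y$ with K\"ahler-current curvature $\theta|_Y + \ddc u|_Y$ — this is the $Y=X$ case of the lemma, which follows from a log resolution $\pi\colon Y'\to Y$ of the singularities of $u|_Y$ plus standard asymptotic Riemann--Roch on $Y'$ (absorbing the bounded twist by $T|_Y$ and the relative canonical correction into lower order terms). This yields
\[
\lim_{k\to\infty}\frac{m!}{k^m}\,h^0(Y, \mathcal O_Y(L)^k\otimes \mathcal O_Y(T)\otimes \mathcal I(ku|_Y)) \;=\; \int_Y(\theta|_Y+\ddc u|_Y)^m,
\]
taking care of the lower sandwich bound.

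The main obstacle, and the step requiring the most care, is matching the upper bound. The plan here is to compare $\mathcal I(ku)|_Y$ with $\mathcal I(\lambda k u|_Y)$ for $\lambda<1$ close to $1$. Taking a common log resolution $\pi\colon X'\to X$ that simultaneously resolves the analytic singularity of $u$ and embeds the strict transform of $Y$ transversally to the exceptional divisor, one checks that both $\mathcal I(ku)|_Y$ and $\mathcal I(ku|_Y)$ pull back to ideals on $\pi^{-1}(Y)$ that differ only by divisorial contributions of bounded order independent of $k$; equivalently, a H\"older-type argument as in the proof of \cref{thm: quasi_eqvi_eqv} combined with Skoda integrability gives $\mathcal I(ku)|_Y \subseteq \mathcal I((1-\epsilon) k u|_Y)$ for $k$ large. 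Consequently
\[
h^0(Y, \mathcal O_Y(L)^k\otimes \mathcal O_Y(T)\otimes \mathcal I(ku)|_Y) \leq h^0(Y, \mathcal O_Y(L)^k\otimes \mathcal O_Y(T)\otimes \mathcal I((1-\epsilon) ku|_Y)),
\]
and the right-hand side has leading asymptotic $\tfrac{k^m}{m!}\int_Y(\theta|_Y+(1-\epsilon)\ddc u|_Y)^m$ by the same Bonavero argument. Letting $\epsilon\searrow 0$ and using multilinearity of non-pluripolar products on the analytic-singularity current $\theta|_Y+\ddc u|_Y$, the upper bound asymptotic also equals $\int_Y(\theta|_Y+\ddc u|_Y)^m$, and the squeeze yields \eqref{eq:asymanasing}.
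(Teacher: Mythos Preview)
Your overall strategy coincides with the paper's: sandwich $N_k$ between $h^0(Y,\mathcal O_Y(L)^k\otimes\mathcal O_Y(T)\otimes\mathcal I(ku|_Y))$ (via \cref{thm: OT_ext}) and $h^0(Y,\mathcal O_Y(L)^k\otimes\mathcal O_Y(T)\otimes\mathcal I((1-\varepsilon)ku|_Y))$, invoke \cite[Theorem~1.1]{DX21} on both ends, and let $\varepsilon\to 0$ using multilinearity.

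The one genuine gap is your justification of the inclusion $\mathcal I(ku)|_Y\subseteq\mathcal I((1-\varepsilon)ku|_Y)$. The H\"older argument from the proof of \cref{thm: quasi_eqvi_eqv} does not transfer: there one trades $L^2$-integrability against one weight for $L^2$-integrability against a \emph{less singular} weight on the \emph{same} space, using an auxiliary potential with controlled Lelong numbers and Skoda. Here you must pass from integrability on $X$ to integrability on the lower-dimensional slice $Y$, and $L^2$-integrability of an extension on $X$ says nothing about any $L^p$-bound of its restriction to $Y$. Your log-resolution sketch is a viable alternative, but making the strict transform of $Y$ transverse to the snc exceptional divisor of a resolution of $u$ needs real work that you have not supplied.

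The paper closes this step with the pointwise sheaf $\mathcal J$: by \cite[Proposition~4.1.6]{Dem15} one has $\mathcal I(ku)\subseteq\mathcal J((1-\varepsilon)ku)$ for analytic $u$, so every section $s$ of $\mathcal O_X(L)^k\otimes\mathcal O_X(T)\otimes\mathcal I(ku)$ satisfies the \emph{pointwise} bound $h^k(s,s)\leq C\,\mathrm e^{(1-\varepsilon)ku}$; this inequality restricts to $Y$ and yields $s|_Y\in H^0(Y,\mathcal O_Y(L)^k\otimes\mathcal O_Y(T)\otimes\mathcal I((1-\varepsilon)ku|_Y))$ directly. This is exactly the inclusion you asserted, valid for every $k$ (not just $k$ large), and it replaces both your H\"older and resolution arguments in one line.
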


\begin{proof} 
 We may assume that $Y\neq X$ as otherwise the result follows from \cite[Theorem~1.1]{DX21}.
Suppose that $\varepsilon\in (0,1)$ is small enough so that $(1-\varepsilon)u \in \PSH(X,\theta)$.

In the estimates below we use the sheaf $\mathcal{J}(\cdot)$ recalled and studied in \cref{lem: coh_J} above. Using \cite[Theorem~1.1]{DX21} we can start to write the following sequence of inequalities:
\[
\begin{aligned}
&\frac{1}{m!}\int_Y (\theta|_Y+\ddc u|_Y)^m \\
=& \lim_{k \to \infty} \frac{1}{k^m} h^0(Y,\mathcal{O}_Y(L)^k\otimes \mathcal{O}_Y(T) \otimes \mathcal I(k u|_Y)) \\
\leq& \varliminf_{k \to \infty} \frac{1}{k^m} \dim\left\{ s|_Y :  s\in H^0(X,\mathcal{O}_X(L)^k \otimes \mathcal{O}_X(T) \otimes \mathcal I(k u))\right\}\quad \text{by \cref{thm: OT_ext}}\\
\leq& \varlimsup_{k \to \infty} \frac{1}{k^m} \dim\left\{ s|_Y :  s\in H^0(X,\mathcal{O}_X(L)^k \otimes \mathcal{O}_X(T) \otimes \mathcal I(k u))\right\} \\
\leq& \varlimsup_{k \to \infty} \frac{1}{k^m} \dim \left\{s|_Y :  s \in H^0(X,\mathcal{O}_X(L)^k \otimes \mathcal{O}_X(T) \otimes \mathcal J((1-\varepsilon)ku))\right\} \  \ \ \  \text{(see below)}\\
\leq& \varlimsup_{k \to \infty} \frac{1}{k^m} \dim_{\mathbb C}\left\{ s \in H^0(Y,\mathcal{O}_Y(L)^k\otimes \mathcal{O}_Y(T) ) : \log h^k (s,s) \leq (1-\varepsilon) k u|_Y\right\} \\
\leq& \varlimsup_{k \to \infty} \frac{1}{k^m} h^0\left(Y,\mathcal{O}_Y(L)^k \otimes \mathcal{O}_Y(T) \otimes \mathcal I((1-\varepsilon) k u|_Y)\right) \\
=&\frac{1}{m!}\int_Y \left(\theta|_Y+(1-\varepsilon)\ddc u|_Y\right)^m \quad \text{by \cite[Theorem~1.1]{DX21}},
\end{aligned}
\]
where in the fifth line we used  \cite[Proposition~4.1.6]{Dem15}.
Letting $\varepsilon \to 0$, \eqref{eq:asymanasing} follows from multilinearity of the non-pluripolar product.
\end{proof}

Next we prove \eqref{eq:restr_volume_intr_1} for K\"ahler currents:

\begin{proposition}\label{prop: rest_volume} Let $u \in \PSH_{>}(X,\theta)$ with $\nu(u,Y) =0$. Then
\begin{equation}\label{eq:relativeDX}
    \int_Y \left(\theta|_Y+\ddc \Rest_Y^\theta(u)\right)^m = \lim_{k \to \infty} \frac{m!}{k^m} h^0(Y,\mathcal{O}_Y(L)^k \otimes \mathcal{O}_Y(T) \otimes \mathcal I(ku)|_Y).
\end{equation}
\end{proposition}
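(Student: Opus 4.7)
The plan is a sandwich argument matching both $\varliminf$ and $\varlimsup$ of the right-hand side to $\int_Y (\theta|_Y + \ddc \Rest_Y^\theta(u))^m$. By \cref{lem: non_zero_vol}, this target is strictly positive, so $\{\theta|_Y\}$ is big on $Y$. For the lower bound, I would apply \cite[Theorem~1.1]{DX21} on $Y$ directly to $\Rest_Y^\theta(u) \in \PSH(Y, \theta|_Y)$, obtaining
\[
\int_Y (\theta|_Y + \ddc \Rest_Y^\theta(u))^m = \lim_{k\to\infty} \frac{m!}{k^m} h^0(Y, \mathcal{O}_Y(L)^k \otimes \mathcal{O}_Y(T) \otimes \mathcal{I}(k\Rest_Y^\theta(u))),
\]
after which the trace Ohsawa--Takegoshi inclusion $\mathcal{I}(k\Rest_Y^\theta(u)) \subseteq \mathcal{I}(ku)|_Y$ from \cref{thm:OT} upgrades this into the desired lower bound.

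For the upper bound I would fix a quasi-equisingular approximation $u_j^D \in \PSH(X, \theta)$ of $u$ (available with $\theta$ itself by \cref{rmk:Demapp}, since $\theta_u$ is a K\"ahler current). The inclusion $u \leq u_j^D$ gives $\mathcal{I}(ku)|_Y \subseteq \mathcal{I}(ku_j^D)|_Y$. The key step is to establish the restriction-type inclusion
\[
\mathcal{I}(ku_j^D)|_Y \subseteq \mathcal{I}\!\left((1-\varepsilon) k u_j^D|_Y\right)
\]
for any $\varepsilon \in (0,1)$ small enough that $(1-\varepsilon) u_j^D \in \PSH_>(X, \theta)$, and for $k$ sufficiently large. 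With this in hand, \cite[Theorem~1.1]{DX21} applied on $Y$ to the analytic-singularity potential $(1-\varepsilon) u_j^D|_Y$ (which remains a K\"ahler current) gives
\[
\varlimsup_{k\to\infty} \frac{m!}{k^m} h^0(Y, \mathcal{O}_Y(L)^k \otimes \mathcal{O}_Y(T) \otimes \mathcal{I}(ku)|_Y) \leq \int_Y (\theta|_Y + (1-\varepsilon) \ddc u_j^D|_Y)^m.
\]
Letting $\varepsilon \searrow 0$ by multilinearity of the non-pluripolar product, and then $j \to \infty$ using \cref{lma:Traceasquasiequiapp} (with $\varepsilon_j = 0$), sends the right-hand side to $\int_Y (\theta|_Y + \ddc \Rest_Y^\theta(u))^m$, matching the lower bound.

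The hard part is the restriction inclusion $\mathcal{I}(ku_j^D)|_Y \subseteq \mathcal{I}((1-\varepsilon) k u_j^D|_Y)$. The opposite inclusion $\mathcal{I}(ku_j^D|_Y) \subseteq \mathcal{I}(ku_j^D)|_Y$ is immediate from Ohsawa--Takegoshi, but elementary examples such as $u = \log(|y|^2+|z|^2)$ on $\mathbb{C}^2$ with $Y = \{z=0\}$ show that exact equality can fail, although the discrepancy is only of order $O(1)$ in $k$ while the floor functions round at scale $k$. I would prove the claim by taking a log resolution of $(X, u_j^D)$ (possible since $u_j^D$ has analytic singularity type): after pullback $u_j^D$ has log poles along a simple normal crossing divisor, multiplier ideals are computed explicitly via rounding, and the $\varepsilon$-loss absorbs the discrepancy between $\lfloor k a_i\rfloor$ and $\lfloor k(1-\varepsilon) a_i\rfloor$ on the relevant components. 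An alternative route is a Fubini argument along the conormal direction to $Y$ combined with Skoda integrability applied to local extensions of sections of $\mathcal{I}(ku_j^D)|_Y$.
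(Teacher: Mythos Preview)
Your proposal is correct, and the lower bound is identical to the paper's. The upper bound also begins the same way (pass to a quasi-equisingular approximant $u_j^D$ with analytic singularities), but your route from there is more direct than the paper's.

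The paper, after reaching $\mathcal{I}(ku_j)|_Y$, passes to the auxiliary sheaf $\mathcal{J}((1-\varepsilon)ku_j)|_Y$ via \cite[Proposition~4.1.6]{Dem15}, then introduces the weight $\psi_Y$ from \eqref{eq: Psi_Y_def}, proves the technical inclusion $\mathcal{J}((1-\varepsilon)ku_j)\cap \mathcal{I}_Y \subseteq \mathcal{I}((1-\varepsilon)^2 ku_j + \psi_Y)$, builds an injection into a quotient sheaf $\mathcal{I}_{k,j}$ supported on $Y$, applies a vanishing theorem from \cite{CDM17} to pass to global sections on $X$, and finally invokes \cref{lem: analytic_formula}. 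Your approach bypasses all of this: the single inclusion $\mathcal{I}(ku_j^D)|_Y \subseteq \mathcal{I}((1-\varepsilon)ku_j^D|_Y)$ lets you apply \cite[Theorem~1.1]{DX21} directly on $Y$, avoiding $\psi_Y$, \cite{CDM17}, and \cref{lem: analytic_formula} altogether.

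Your key inclusion is correct, but the proof is easier than either of your suggested routes. Using the same \cite[Proposition~4.1.6]{Dem15} that the paper cites, one has $\mathcal{I}(ku_j^D) \subseteq \mathcal{J}((1-\varepsilon)ku_j^D)$ for $k$ large. Now if $f$ is a germ in $\mathcal{J}((1-\varepsilon)ku_j^D)|_Y$, then some local extension $F$ satisfies $|F|^2 \leq C e^{(1-\varepsilon)ku_j^D}$, and restricting this pointwise bound to $Y$ gives $|f|^2 \leq C e^{(1-\varepsilon)ku_j^D|_Y}$, so $f \in \mathcal{J}((1-\varepsilon)ku_j^D|_Y) \subseteq \mathcal{I}((1-\varepsilon)ku_j^D|_Y)$. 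This replaces your log-resolution or Fubini/Skoda argument with one line. (Your log-resolution sketch is also valid; just note that one needs an \emph{embedded} resolution putting the strict transform of $Y$ in SNC position with the exceptional divisor, which is available but worth stating.)
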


\begin{proof} 
We may assume that $Y\neq X$, as otherwise \eqref{eq:relativeDX} reduces immediately to \cite[Theorem~1.1]{DX21}.

Let $u_j \searrow u$ be a quasi-equisingular approximation of $u$, given by Theorem \ref{thm:Demailly}. Thus, we can assume that the  $\theta_{u_j}$ are K\"ahler currents. 
After possibly replacing $u_j$ by a subsequence, there exists $\varepsilon_0 \in (0,1)\cap \mathbb{Q}$ such that $\theta_{(1-\varepsilon)^2 u_j}$ and $\theta_{(1-\varepsilon) u_j}$ are also K\"ahler currents for any $\varepsilon \in (0,\varepsilon_0)$.

We claim that for any $k\in \mathbb{N}$,
\begin{equation}\label{eq:JcapI}
\mathcal{J}((1-\epsilon)ku_j)\cap \mathcal{I}(\psi_Y)\subseteq \mathcal{I}((1-\epsilon)^2ku_j+\psi_Y).
\end{equation}
Recall the sheaf $\mathcal{J}$ is introduced and studied in \cref{lem: coh_J}.

Take $x\in X$, and it suffices to argue \eqref{eq:JcapI} along the germ of $x$. Since $\psi_Y$ is locally bounded outside $Y$, we may assume that $x\in Y$. Recall that by \cref{lma:IpsiY}, $\mathcal{I}(\psi_Y)=\mathcal{I}_Y$.

Let $f \in \mathcal J((1-\varepsilon)k u_j)_x\cap \mathcal{I}(\psi_Y)_x$. Then there is an open neighbourhood $U$ of $x$ in $X$ such that
$|f|^{2 (1-\varepsilon)} \mathrm{e}^{-k(1-\varepsilon)^2 u_j} \leq C$
holds on $U\setminus \{u_j=-\infty\}$ for some $C>0$, hence
\[
\int_U |f|^2 \mathrm{e}^{-k (1-\varepsilon)^2 u_j-\psi_Y}\,\omega^n = \int_U |f|^{2 (1-\varepsilon)} \mathrm{e}^{-k (1-\varepsilon)^2 u_j} |f|^{2 \varepsilon} \mathrm{e}^{-\psi}\,\omega^n \leq C \int_U |f|^{2 \varepsilon} \mathrm{e}^{-\psi}\,\omega^n < \infty,
\]
where the last inequality follows from \cref{lma:IpsiY}. We have proved the claim \eqref{eq:JcapI}.

Next we consider the following composition morphism of coherent sheaves on $Y$:
\begin{flalign}\label{eq: sheaf_injection}
\mathcal J((1-\varepsilon)k u_j)|_Y \coloneqq \frac{\mathcal J((1-\varepsilon)k u_j)}{\mathcal J((1-\varepsilon)k u_j) \cap \mathcal{I}_Y} \hookrightarrow \frac{\mathcal I((1-\varepsilon )^2 ku_j)}{\mathcal J((1-\varepsilon)k u_j) \cap \mathcal I_Y} \to \frac{\mathcal I((1-\varepsilon )^2 ku_j)}{\mathcal I((1-\varepsilon)^2 k u_j + \psi_Y)}.
\end{flalign}
Above we identified the coherent $\mathcal{O}_X$-modules supported on $Y$ with coherent $\mathcal{O}_Y$-modules. Note that the target of \eqref{eq: sheaf_injection} is also supported on $Y$ as $\psi_Y$ is locally bounded outside $Y$. We denote the coherent $\mathcal{O}_Y$-module whose pushforward to $X$ gives $\frac{\mathcal I((1-\varepsilon )^2 ku_j)}{\mathcal I((1-\varepsilon)^2 k u_j + \psi_Y)}$ by $\mathcal{I}_{k,j}$.

In \eqref{eq: sheaf_injection}, the first map is the inclusion and the second one is the obvious projection induced by \eqref{eq:JcapI}. Although in general the second map fails to be injective, we observe that the composition is still injective as $\mathcal I((1-\varepsilon)^2 k u_j + \psi)\subseteq \mathcal I(\psi)=\mathcal{I}_Y$. Therefore, for any $k\in \mathbb{N}$, we have an injective morphism of coherent $\mathcal{O}_Y$-modules:
\begin{equation}\label{eq:injLkTideal}
\mathcal{O}_Y(L)^k\otimes \mathcal{O}_Y(T)\otimes \mathcal J((1-\varepsilon)k u_j)|_Y \hookrightarrow \mathcal{O}_Y(L)^k\otimes \mathcal{O}_Y(T)\otimes \mathcal{I}_{k,j}.
\end{equation}
By the projection formula,
\begin{equation}\label{eq:projfor}
H^0\left(Y,\mathcal{O}_Y(L)^k\otimes \mathcal{O}_Y(T)\otimes \mathcal{I}_{k,j}\right)\cong H^0\left(X,\mathcal{O}_X(L)^k\otimes \mathcal{O}_X(T)\otimes \frac{\mathcal I((1-\varepsilon )^2 ku_j)}{\mathcal I((1-\varepsilon)^2 k u_j + \psi_Y)}\right).
\end{equation}

Using \cite[Theorem~1.1]{DX21} we can start the following inequalities:
\[
\begin{aligned}
&\frac{1}{m!}\int_Y \left(\theta|_Y+\ddc\Rest_Y^\theta(u)\right)^m \\
=& \lim_{k \to \infty} \frac{1}{k^m} h^0(Y, \mathcal{O}_Y(L)^k\otimes \mathcal{O}_Y(T) \otimes \mathcal I(k\Rest_Y^\theta(u))) \quad \text{by \cite[Theorem~1.1]{DX21}}\\
\leq & \varliminf_{k \to \infty} \frac{1}{k^m} h^0(Y, \mathcal{O}_Y(L)^k\otimes \mathcal{O}_Y(T) \otimes \mathcal I(ku)|_Y) \quad \text{ by Theorem \ref{thm:OT}}\\
\leq & \varlimsup_{k \to \infty} \frac{1}{k^m} h^0(Y, \mathcal{O}_Y(L)^k\otimes \mathcal{O}_Y(T) \otimes \mathcal I(ku)|_Y)\\
\leq & \varlimsup_{k \to \infty} \frac{1}{k^m} h^0(Y, \mathcal{O}_Y(L)^k\otimes \mathcal{O}_Y(T) \otimes \mathcal I(ku_j)|_Y) \\
\leq & \varlimsup_{k \to \infty} \frac{1}{k^m} h^0(Y, \mathcal{O}_Y(L)^k\otimes \mathcal{O}_Y(T) \otimes \mathcal J((1-\varepsilon )ku_j)|_Y)  \quad \text{by \cite[Proposition~4.1.6]{Dem15}}\\
\leq & \varlimsup_{k \to \infty} \frac{1}{k^m} h^0(Y, \mathcal{O}_Y(L)^k\otimes \mathcal{O}_Y(T) \otimes \mathcal{I}_{k,j})\quad \text{by \eqref{eq:injLkTideal}}\\
\leq & \varlimsup_{k \to \infty} \frac{1}{k^m} \dim_\mathbb C \Big\{ s|_Y : \ s \in H^0\Big(X, \mathcal{O}_X(L)^k \otimes \mathcal{O}_X(T) \otimes \frac{ \mathcal I ((1-\varepsilon )^2 ku_j)}{\mathcal I((1-\varepsilon)^2 k u_j + \psi_Y)}\Big)\Big\} \quad \text{by \eqref{eq:projfor}}\\
= & \varlimsup_{k \to \infty} \frac{1}{k^m} \dim_\mathbb C \left\{s|_Y : s\in H^0(X,\mathcal{O}_X(L)^k \otimes \mathcal{O}_X(T)\otimes \mathcal{I}((1-\epsilon)^2ku_j))\right\} \quad \text{(see below)}\\
=& \frac{1}{m!}\int_Y \left(\theta|_Y+(1-\varepsilon)^2 \ddc u_j|_Y\right)^m \quad \text{by \cref{lem: analytic_formula}},
\end{aligned}
\]
where in the penultimate line we used \cite[Theorem~1.1(6)]{CDM17} for $q=0$. Letting $\varepsilon \to \infty$ and then $j \to \infty$ the result follows.
\end{proof}

Next, we obtain \eqref{eq:restr_volume_intr_1} in full generality:

\begin{theorem}\label{thm: relativeDX}
    Let $u\in \PSH(X,\theta)$ such that $\nu(u,Y) =0$. Then
\begin{equation}\label{eq:relativeDX2}
    \int_Y \left(\theta|_Y+\ddc \Rest_Y^\theta(u)\right)^m = \lim_{k \to \infty} \frac{m!}{k^m} h^0(Y,\mathcal{O}_Y(L)^k \otimes \mathcal{O}_Y(T) \otimes \mathcal I(ku)|_Y).
\end{equation}
\end{theorem}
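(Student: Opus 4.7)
The plan is to bootstrap from \cref{prop: rest_volume} (which handles the K\"ahler current case) via a rescaling argument. For each integer $j \geq 1$ the potential $ju$ lies in $\PSH_>(X, j\theta+\omega)$, since $(j\theta+\omega)+\ddc(ju) = j\theta_u + \omega \geq \omega$, and $\nu(ju,Y)=0$. The line bundle $L^j\otimes A$ has curvature $j\theta+\omega$. Applying \cref{prop: rest_volume} to the data $(j\theta+\omega,\, L^j\otimes A,\, L^r\otimes T,\, ju)$ for each residue $r\in\{0,1,\ldots,j-1\}$ and invoking \cref{lem: linear_trace}(ii) to rewrite $\Rest_Y^{j\theta+\omega}(ju) = j\Rest_Y^{\theta+(1/j)\omega}(u)$, we obtain after setting $k = jK+r$ and letting $K\to\infty$:
\[
\lim_{\substack{k\to\infty \\ k\equiv r\,(\mathrm{mod}\,j)}} \frac{m!}{k^m} h^0\!\left(Y,\,\mathcal{O}_Y(L)^k \otimes \mathcal{O}_Y(A)^{(k-r)/j} \otimes \mathcal{O}_Y(T) \otimes \mathcal{I}((k-r)u)|_Y \right) \;=\; I_j,
\]
where $I_j := \int_Y\bigl((\theta+(1/j)\omega)|_Y + \ddc\Rest_Y^{\theta+(1/j)\omega}(u)\bigr)^m$. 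Allowing the twisting to vary through $L^r\otimes T$ is the device that delivers control along every arithmetic progression modulo $j$, rather than only along multiples of $j$.

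For the upper bound in \eqref{eq:relativeDX2}, denote by $G(k)$ the normalised dimension on its right-hand side. After normalising so that $u \leq 0$ we have $\mathcal{I}(ku)|_Y \subseteq \mathcal{I}((k-r)u)|_Y$, and multiplication by a nonzero section of $\mathcal{O}_Y(A)^{(k-r)/j}$ (available once $k\geq j+r$, as $A|_Y$ is very ample and the target sheaves are torsion-free on the smooth connected $Y$) injects global sections. Combining these two inclusions,
\[
G(k) \leq \frac{m!}{k^m} h^0\!\left(Y,\,\mathcal{O}_Y(L)^k \otimes \mathcal{O}_Y(A)^{(k-r)/j} \otimes \mathcal{O}_Y(T) \otimes \mathcal{I}((k-r)u)|_Y\right).
\]
Hence $\limsup_{k\equiv r\,(\mathrm{mod}\,j)} G(k) \leq I_j$ for each $r$, and therefore $\limsup_k G(k) \leq I_j$. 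Letting $j\to\infty$, \cref{lem: trace_vol_prop}(ii) gives $I_j \searrow I := \int_Y(\theta|_Y + \ddc\Rest_Y^\theta(u))^m$, so $\limsup_k G(k) \leq I$.

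For the lower bound we apply \cite[Theorem~1.1]{DX21} to $\Rest_Y^\theta(u)\in \PSH(Y,\theta|_Y)$: when $I>0$ the class $\{\theta|_Y\}$ is automatically big, and when $I=0$ no lower bound is needed. This produces
\[
\lim_{k\to\infty}\frac{m!}{k^m} h^0\!\left(Y,\,\mathcal{O}_Y(L)^k \otimes \mathcal{O}_Y(T) \otimes \mathcal{I}(k\Rest_Y^\theta(u))\right) \;=\; \int_Y\!\bigl(\theta|_Y + \ddc P_{\theta|_Y}[\Rest_Y^\theta(u)]_{\mathcal{I}}\bigr)^m \;\geq\; I,
\]
where the inequality uses $\Rest_Y^\theta(u) \leq P_{\theta|_Y}[\Rest_Y^\theta(u)]_{\mathcal{I}}$ together with the monotonicity of non-pluripolar mass. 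Combined with the containment $\mathcal{I}(k\Rest_Y^\theta(u)) \subseteq \mathcal{I}(ku)|_Y$ from \cref{thm:OT}, this gives $\liminf_k G(k) \geq I$, and the two matching bounds prove both that the limit in \eqref{eq:relativeDX2} exists and that it equals $I$.

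The main obstacle is the bookkeeping in the upper bound: \cref{prop: rest_volume} applied to $ju$ yields asymptotics only along $k\in j\mathbb{N}$, and the residue-by-residue argument with the varying twisting $L^r\otimes T$ is what converts these subsequential limits into a genuine bound on $\limsup_k G(k)$.
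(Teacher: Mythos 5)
Your proposal is correct and follows essentially the same route as the paper: the lower bound via \cref{thm:OT} together with \cite[Theorem~1.1]{DX21} applied to $\Rest_Y^\theta(u)$, and the upper bound by writing $k=qk_0+r$, twisting with powers of the very ample $A$ so that \cref{prop: rest_volume} applies to $L^{k_0}\otimes A$ with the K\"ahler current potential $k_0u$ and twisting bundle $T\otimes L^r$, then letting $k_0\to\infty$ via \cref{lem: trace_vol_prop}(ii). Your residue-by-residue bookkeeping and the explicit appeal to \cref{lem: linear_trace}(ii) for $\Rest_Y^{j\theta+\omega}(ju)=j\Rest_Y^{\theta+(1/j)\omega}(u)$ only make explicit what the paper leaves implicit in its single limsup over $k$.
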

When $Y=X$, in view of \cref{ex:traceYequalX}, formula \eqref{eq:relativeDX2} recovers the main result of \cite{DX21}.

\begin{proof}
Using \cref{thm:OT} and \cite[Theorem~1.1]{DX21} we obtain that 

\begin{flalign*}\label{eq:DX_cor}
    \int_Y \left(\theta|_Y+\ddc \Rest_Y^\theta(u)\right)^m &= \lim_{k \to \infty} \frac{m!}{k^m} h^0(Y,\mathcal{O}_Y(L)^k \otimes \mathcal{O}_Y(T) \otimes \mathcal I(k\Rest_Y^\theta(u)))\\
    & \leq \varliminf_{k \to \infty} \frac{m!}{k^m} h^0(Y,\mathcal{O}_Y(L)^k \otimes \mathcal{O}_Y(T) \otimes \mathcal I(ku)|_Y).
\end{flalign*}

Now we address the other direction in \eqref{eq:relativeDX2}. Let $\phi \in H^0(X,A)$ be a section that does not vanish identically on $Y$. Such $\phi$ exists since $A$ is very ample. Recall that we chose a hermitian metric $g$ on $A$ with curvature equal to $\omega$.
   
We fix $k_0 \in \mathbb{N}$.  For any $k \geq 0$, we have that $k = q k_0 + r$ with $q,r \in \mathbb{N}$ and $r \in \{0,\ldots, k_0-1\}$. Also, we have an injective linear map
\[
H^0(Y,\mathcal{O}_Y(L)^k \otimes \mathcal{O}_Y(T) \otimes \mathcal I(k u|_Y))\xrightarrow{\cdot \phi^{\otimes q}} H^0\left(Y,\mathcal{O}_Y(L)^k \otimes \mathcal{O}_Y(A)^q\otimes \mathcal{O}_Y(T) \otimes \mathcal I(k u|_Y)\right).
\]
So
\begin{equation}
     h^0\left(Y,\mathcal{O}_Y(L)^k \otimes \mathcal{O}_Y(T) \otimes \mathcal I(k u|_Y)\right)\leq h^0\left(Y,\mathcal{O}_Y(L)^k \otimes \mathcal{O}_Y(A)^q\otimes \mathcal{O}_Y(T) \otimes \mathcal I(k u|_Y)\right).
\end{equation}
Using this estimate can start the following sequences of inequalities
\begin{flalign*}
 &\varlimsup_{k\to\infty} \frac{m!}{k^m}  h^0\left(Y,\mathcal{O}_Y(L)^k \otimes \mathcal{O}_Y(T) \otimes \mathcal I(k u|_Y)\right)  \\
 \leq &  \varlimsup_{k\to\infty} \frac{m!}{k^m}    h^0\left(Y,\mathcal{O}_Y(L)^k \otimes \mathcal{O}_Y(A)^q\otimes \mathcal{O}_Y(T) \otimes \mathcal I(k u|_Y)\right) \\
 = &  \frac{1}{k_0^m}  \varlimsup_{q\to\infty} \frac{m!}{q^m}    h^0\left(Y,\mathcal{O}_Y(L)^{q k_0} \otimes \mathcal{O}_Y(A)^q\otimes \mathcal{O}_Y(T) \otimes \mathcal{O}_Y(L)^{r}\otimes \mathcal I(k u|_Y)\right)\\
 \leq & \frac{1}{k_0^m}  \varlimsup_{q\to\infty} \frac{m!}{q^m}    h^0\left(Y,\mathcal{O}_Y(L)^{q k_0} \otimes \mathcal{O}_Y(A)^q\otimes \mathcal{O}_Y(T) \otimes \mathcal{O}_Y(L)^{r}\otimes \mathcal I(k_0q u|_Y)\right)\\
 = & \int_Y \left(\theta|_Y+\frac{1}{k_0}\omega|_Y+\ddc \Rest_Y^{\theta+k_0^{-1}\omega}(u)\right)^m,
\end{flalign*}
where in the fourth line we have used that $k_0 q \leq k$ and in the last line we have used \cref{prop: rest_volume} for the big line bundle $L^{k_0} \otimes A$, the K\"ahler current $k_0 \theta_u - \ddc \log g = k_0 \theta_u + \omega$, and twisting bundle $T \otimes L^r$. Letting $k_0\to\infty$ and using \cref{lem: trace_vol_prop}(ii), we conclude that
\[
\varlimsup_{k\to\infty} \frac{m!}{k^m}  h^0\left(Y,\mathcal{O}_Y(L)^k \otimes \mathcal{O}_Y(T) \otimes \mathcal I(k u|_Y)\right)\leq \int_Y \left(\theta|_Y+\ddc \Rest_Y^\theta(u)\right)^m.
\]
\end{proof}

Lastly, we turn our attention to global sections. For this we will need the following global $L^2$ extension theorem for the trace operator:

\begin{theorem}\label{thm: OT_ext_global} Suppose that $\theta_u$ is a K\"ahler current and $\nu(u,Y)=0$. Then there exists $k_0$ such that for all $k \geq k_0$ and  $s \in H^0(Y, \mathcal{O}_Y(L)^k\otimes \mathcal{O}_Y(T) \otimes \mathcal I(k \Rest_Y^\theta(u)))$, there exists an extension $\tilde s \in H^0(X, \mathcal{O}_X(L)^k \otimes \mathcal{O}_X(T)  \otimes \mathcal I(ku))$.
\end{theorem}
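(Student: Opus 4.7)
The plan is to reduce the statement to the classical Ohsawa--Takegoshi extension theorem (\cref{thm: OT_ext}) applied to a carefully chosen rescaling of a quasi-equisingular approximation of $u$. Fix $\delta>0$ with $\theta_u \geq \delta\omega$, and let $u_j^D$ be the quasi-equisingular approximation of $u$ from \cref{thm:Demailly}, chosen via \cref{rmk:Demapp} so that $\theta_{u_j^D}\geq (\delta/2)\omega$; in particular $u_j^D\in \PSH(X,\theta-(\delta/2)\omega)$. Normalize so that $u_j^D\leq 0$. Since $u\leq u_j^D$ forces $\nu(u_j^D,Y)\leq \nu(u,Y)=0$ and $u_j^D$ has analytic singularities, its singular locus cannot contain $Y$ and so $u_j^D|_Y\not\equiv -\infty$.

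For $k$ fixed and $j$ with $2^j>k$, set $\mu_j := 2^j/(2^j-k)>1$. A direct computation using $\theta_{u_j^D}\geq (\delta/2)\omega$ and the smoothness of $\theta$ shows that for all sufficiently large $j$,
\[
\theta+\ddc(\mu_j u_j^D)=(1-\mu_j)\theta+\mu_j\theta_{u_j^D}\geq (\delta/4)\omega,
\]
so $\mu_j u_j^D \in \PSH(X,\theta-(\delta/4)\omega)$, with the lower bound $\delta/4$ uniform in such $j$. Consequently, \cref{thm: OT_ext} can be applied to the potential $\mu_j u_j^D$ with a threshold $k_0$ depending only on $\delta/4$ and the metric $r$ on $T$; in particular, $k_0$ is independent of $j$ and of the section being extended. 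Fix such a $k_0$ and take $k\geq k_0$.

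Given $s\in H^0(Y,\mathcal{O}_Y(L)^k\otimes \mathcal{O}_Y(T)\otimes \mathcal I(k\Rest_Y^\theta(u)))$, Guan--Zhou's strong openness theorem \cite{GZ15}, together with the coherence of the stabilizing ideals on the compact $Y$, provides some $\varepsilon>0$ with $\mathcal I((1+\varepsilon)k\Rest_Y^\theta(u))=\mathcal I(k\Rest_Y^\theta(u))$. Since $\Rest_Y^\theta(u)\preceq u_j^D|_Y$ (as $P_{\theta|_Y}[u_j^D|_Y]\preceq u_j^D|_Y$ and this envelope decreases to $\Rest_Y^\theta(u)$) and $u_j^D\leq 0$, we obtain the chain of inclusions
\[
\mathcal I(k\Rest_Y^\theta(u))=\mathcal I((1+\varepsilon)k\Rest_Y^\theta(u))\subseteq \mathcal I((1+\varepsilon)k u_j^D|_Y)\subseteq \mathcal I(k\mu_j u_j^D|_Y),
\]
the last step requiring only that $\mu_j\leq 1+\varepsilon$, which holds for $j$ sufficiently large. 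For such $j$, \cref{thm: OT_ext} applied to $\mu_j u_j^D$ and the section $s$ produces an extension $\tilde s \in H^0(X,\mathcal{O}_X(L)^k\otimes \mathcal{O}_X(T)\otimes \mathcal I(k\mu_j u_j^D))$.

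The final observation is that $k\mu_j=k\cdot 2^j/(2^j-k)$ is precisely the factor appearing in \cref{thm:Demailly}(iii) with the substitution $s=k$, so $\mathcal I(k\mu_j u_j^D)\subseteq \mathcal I(ku)$, and $\tilde s$ is the desired extension. The main delicacy is the rescaling: a direct application of \cref{thm: OT_ext} to $u_j^D$ itself would only yield an extension in $\mathcal I(ku_j^D)$, which is strictly larger than $\mathcal I(ku)$. The choice $\mu_j=2^j/(2^j-k)$ is dictated by Demailly's integrability estimate, while $\mu_j\to 1$ guarantees that $\mu_j u_j^D$ stays uniformly in $\PSH(X,\theta-(\delta/4)\omega)$, ensuring that the threshold $k_0$ furnished by \cref{thm: OT_ext} is genuinely independent of $j$.
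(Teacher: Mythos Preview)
Your proof is correct and follows essentially the same approach as the paper: both arguments take a quasi-equisingular approximation $u_j^D$ with a uniform K\"ahler-current lower bound, invoke Guan--Zhou strong openness to gain an $\varepsilon$, apply the classical Ohsawa--Takegoshi theorem (\cref{thm: OT_ext}) to a slight rescaling of $u_j^D$ whose positivity margin is uniform in $j$, and then pass from $\mathcal I(k(1+\cdot)u_j^D)$ to $\mathcal I(ku)$. The only cosmetic difference is that the paper rescales by the fixed factor $1+\varepsilon$ and appeals to the abstract quasi-equisingular property (Definition~\ref{def:equising}(iii)) for the final inclusion, whereas you rescale by the explicit $\mu_j=2^j/(2^j-k)$ so as to invoke \cref{thm:Demailly}(iii) directly; both routes yield the same conclusion with the same dependencies for $k_0$.
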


\begin{proof} We may assume that $Y\neq X$ and that $\theta_u \geq 3\delta \omega$ for some $\delta>0$. Let $u^D_j$ be the decreasing quasi-equisingular approximation of $u$ from \cref{thm:Demailly}. We can assume that $\theta_{u_j^D} \geq 2\delta \omega$ for all $j$ as follows from \cref{rmk:Demapp}. Also, there exists $\varepsilon_0 >0$ such that $\theta_{(1+\varepsilon)u_j^D} \geq \delta \omega$ for any $\varepsilon \in (0,\varepsilon_0)$. Take $k_0=k_0(\delta)$ as in \cref{thm: OT_ext}.

We fix $k \geq k_0$ and $s \in H^0(Y, \mathcal{O}_Y(L)^k\otimes \mathcal{O}_Y(T) \otimes \mathcal I(k\Rest_Y^\theta(u)))$. By the Guan--Zhou's strong openness theorem \cite{GZ15}, there exists $\varepsilon  \in (0,\varepsilon_0)$ such that $s \in H^0(Y, \mathcal{O}_Y(L)^k\otimes \mathcal{O}_Y(T) \otimes \mathcal I(k(1+\varepsilon)\Rest_Y^\theta(u)))$. 

Since $\Rest_Y^\theta(u) \preceq u_j^D|_Y$, we obtain that  $s \in H^0(Y, \mathcal{O}_Y(L)^k\otimes \mathcal{O}_Y(T) \otimes \mathcal I(k(1+\varepsilon)u_j^D|_Y))$. Due to \cref{thm: OT_ext} there exists $\tilde s_j \in H^0(X, \mathcal{O}_X(L)^k \otimes \mathcal{O}_X(T)  \otimes \mathcal I(k(1+\varepsilon)u_j^D))$ such that $\tilde s_j|_Y =s$, for all $j$.

But by definition of quasi-equisingular approximation in \cref{def:equising}, we obtain that for high enough $j$ the inclusion $\mathcal I(k(1+\varepsilon)u_j^D) \subset \mathcal I(k u)$ holds. As a result, $\tilde s_j \in H^0(X, \mathcal{O}_X(L)^k \otimes \mathcal{O}_X(T)  \otimes \mathcal I(k u))$ for high enough $j$, finishing the argument.
\end{proof}

As an application of the above results, we obtain  \eqref{eq:restr_volume_intr_2}:

\begin{theorem}\label{thm: rest_volume_2} Let $u \in \PSH_{>}(X,\theta)$ such that $\nu(u,Y) =0$. Then
\begin{equation*}
    \int_Y \left(\theta|_Y+\ddc \Rest_Y^\theta(u)\right)^m  = \lim_{k \to \infty} \frac{m!}{k^m} \dim_{\mathbb C}\left\{ s|_Y:  s\in H^0(X,\mathcal{O}_X(L)^k \otimes \mathcal{O}_X(T) \otimes \mathcal I(k u))\right\}.
\end{equation*}
\end{theorem}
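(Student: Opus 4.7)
The plan is to sandwich
\[
d_k \coloneqq \dim_{\mathbb{C}}\big\{s|_Y : s \in H^0(X,\mathcal{O}_X(L)^k \otimes \mathcal{O}_X(T) \otimes \mathcal I(ku))\big\}
\]
between two quantities both asymptotic to $\frac{k^m}{m!}\int_Y(\theta|_Y+\ddc\Rest_Y^\theta(u))^m$, with \cref{thm: relativeDX} providing the upper bound and the new global extension result \cref{thm: OT_ext_global} providing the matching lower bound.

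For the upper bound, note that for any $s \in H^0(X,\mathcal{O}_X(L)^k \otimes \mathcal{O}_X(T) \otimes \mathcal I(ku))$ the restriction $s|_Y$ is a holomorphic section of $\mathcal{O}_Y(L)^k \otimes \mathcal{O}_Y(T)$ whose germs admit local extensions into $\mathcal I(ku)$. By the very definition of $\mathcal I(ku)|_Y$ recalled after \cref{thm:OT}, this means $s|_Y$ lies in $H^0(Y,\mathcal{O}_Y(L)^k \otimes \mathcal{O}_Y(T)\otimes \mathcal I(ku)|_Y)$. Hence $d_k \leq h^0(Y,\mathcal{O}_Y(L)^k \otimes \mathcal{O}_Y(T)\otimes \mathcal I(ku)|_Y)$, and then \cref{thm: relativeDX} immediately gives
\[
\limsup_k \frac{m!}{k^m} d_k \leq \int_Y\left(\theta|_Y+\ddc\Rest_Y^\theta(u)\right)^m.
\]

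For the lower bound I would invoke \cref{thm: OT_ext_global}: since $\theta_u$ is a K\"ahler current and $\nu(u,Y)=0$, for $k$ sufficiently large every section of $\mathcal{O}_Y(L)^k \otimes \mathcal{O}_Y(T) \otimes \mathcal I(k\Rest_Y^\theta(u))$ extends to a section of $\mathcal{O}_X(L)^k \otimes \mathcal{O}_X(T) \otimes \mathcal I(ku)$, so its restriction already sits in the space whose dimension is $d_k$. Thus
\[
d_k \geq h^0(Y,\mathcal{O}_Y(L)^k \otimes \mathcal{O}_Y(T) \otimes \mathcal I(k\Rest_Y^\theta(u))) \quad \textup{for all } k \gg 0.
\]
Applying \cite[Theorem~1.1]{DX21} on the projective manifold $Y$ to the potential $\Rest_Y^\theta(u) \in \PSH(Y,\theta|_Y)$ (with line bundle $L|_Y$ and twist $T|_Y$; the potential is $\theta|_Y$-model by \cref{lem: trace_vol_prop}(ii) and has positive mass by \cref{lem: non_zero_vol}), the right-hand side is asymptotic to $\frac{k^m}{m!}\int_Y(\theta|_Y+\ddc\Rest_Y^\theta(u))^m$, yielding the reverse inequality for $\liminf_k \frac{m!}{k^m}d_k$. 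Combining the two bounds, the limit exists and equals the asserted value.

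No step here is really the obstacle; all the substantial work has already been carried out in \cref{thm: OT_ext_global} and \cref{thm: relativeDX}. The K\"ahler current hypothesis enters exclusively through the global extension theorem, which is also exactly where the argument would break down for a general $u \in \PSH(X,\theta)$---consistent with the remark in the introduction that the identity fails for $V_\theta$.
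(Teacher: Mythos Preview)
Your proposal is correct and follows essentially the same route as the paper: the upper bound via the trivial inclusion into $H^0(Y,\mathcal{O}_Y(L)^k\otimes\mathcal{O}_Y(T)\otimes\mathcal I(ku)|_Y)$ combined with \cref{thm: relativeDX}, and the lower bound via \cref{thm: OT_ext_global} combined with \cite[Theorem~1.1]{DX21}. The paper presents this as a single chain of inequalities rather than two separate bounds, but the ingredients and logic are identical.
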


\begin{proof} 
This is a consequence of \cite[Theorem~1.1]{DX21}, \cref{thm: OT_ext_global} and \cref{thm: relativeDX}:
\begin{flalign*}
    \int_Y \left(\theta|_Y+\ddc \Rest_Y^\theta(u)\right)^m  & = \lim_{k \to \infty} \frac{m!}{k^m} h^0(Y,\mathcal{O}_Y(L)^k\otimes \mathcal{O}_Y(T) \otimes \mathcal I(k \Rest_Y^\theta(u)))\\ 
    & \leq  \varliminf_{k \to \infty} \frac{m!}{k^m} \dim_{\mathbb C}\left\{ s|_Y:  s\in H^0(X,\mathcal{O}_X(L)^k \otimes \mathcal{O}_X(T) \otimes \mathcal I(k u))\right\}\\
    & \leq  \varlimsup_{k \to \infty} \frac{m!}{k^m} \dim_{\mathbb C}\left\{ s|_Y:  s\in H^0(X,\mathcal{O}_X(L)^k \otimes \mathcal{O}_X(T) \otimes \mathcal I(k u))\right\}\\
  & \leq  \lim_{k \to \infty} \frac{m!}{k^m} h^0(Y ,\mathcal{O}_Y(L)^k\otimes \mathcal{O}_Y(T) \otimes \mathcal I(k u)|_Y)\\
  & =  \int_Y \left(\theta|_Y+\ddc \Rest_Y^\theta(u)\right)^m.
\end{flalign*}
\end{proof}

Lastly, we give a new proof of the equality between transcendental and algebraic restricted volumes from \cite[Theorem 1.3]{Mat13} (c.f. \cite[Theorem 1.3]{His12}):

\begin{corollary}\label{cor: rest_volume_2} Suppose that $Y$ is not contained in the augmented base locus of $L$. Then
\begin{equation}\label{eq:relativeDX5}
\vol_{X|Y}(\{\theta\})  = \lim_{k \to \infty} \frac{m!}{k^m} \dim_{\mathbb C}\left\{ s|_Y:  s\in H^0(X,\mathcal{O}_X(L)^k )\right\}.
\end{equation}
\end{corollary}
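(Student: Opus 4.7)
The plan is to sandwich $\lim_{k}\frac{m!}{k^m}\dim\{s|_Y : s\in H^0(X,L^k)\}$ between two expressions both equal to $\vol_{X|Y}(\{\theta\})$, thereby forcing the limit to exist and take the prescribed value. This bypasses the absence of a global Ohsawa--Takegoshi extension theorem for $V_\theta$, which would be the most natural tool but is unavailable since $\theta_{V_\theta}$ is typically not a K\"ahler current.

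For the upper bound, since $Y\not\subset\mathrm{nK}(\{\theta\})$ and $\mathrm{nn}(\{\theta\})\subseteq\mathrm{nK}(\{\theta\})$, we have $\nu(V_\theta,Y)=0$, so $\Rest_Y^\theta(V_\theta)$ is defined. For any non-zero $s\in H^0(X,L^k)$, the normalized potential $k^{-1}\log|s|^2_{h^k} - \sup_X k^{-1}\log|s|^2_{h^k}$ lies in $\PSH(X,\theta)$ and is non-positive, hence dominated by $V_\theta$. A routine local integrability estimate against $e^{-kV_\theta}$ (working with the coefficient $f$ of $s$ in a local frame, where the smoothness of the local weight of $h$ yields a finite integral) then shows $s\in H^0(X,L^k\otimes \mathcal{I}(kV_\theta))$, whence
\[
\{s|_Y : s\in H^0(X,L^k)\}\subseteq H^0(Y,L^k|_Y\otimes \mathcal{I}(kV_\theta)|_Y).
\]
Applying \cref{thm: relativeDX} with $u=V_\theta$ and trivial $T$, together with the identification $\vol_{X|Y}(\{\theta\}) = \int_Y(\theta|_Y+\ddc \Rest_Y^\theta(V_\theta))^m$ from \cref{prop:restvol_voltrace}, yields $\varlimsup_{k\to\infty}\frac{m!}{k^m}\dim\{s|_Y : s\in H^0(X,L^k)\}\leq \vol_{X|Y}(\{\theta\})$.

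For the lower bound I appeal to Matsumura's definitional formula \eqref{eq: rest_vol_Mat_def}, $\vol_{X|Y}(\{\theta\}) = \sup_\psi \int_Y(\theta|_Y+\ddc\psi|_Y)^m$, the sup running over $\psi\in \PSH(X,\theta)$ with analytic singularity type and $\psi|_Y\not\equiv -\infty$. Since \cref{lem: analytic_formula} requires the K\"ahler current hypothesis, I regularize: fix an auxiliary $\psi_0\in\PSH(X,\theta)$ with analytic singularity type, $\theta_{\psi_0}$ a K\"ahler current, and singular locus equal to $\mathrm{nK}(\{\theta\})$, produced by Demailly's regularization of a K\"ahler current in $\{\theta\}$ (available thanks to bigness, and refined via \cref{rmk:Demapp} so that the resulting potential lies in $\PSH(X,\theta)$). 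This $\psi_0$ satisfies $\psi_0|_Y\not\equiv -\infty$ by the hypothesis $Y\not\subset\mathrm{nK}(\{\theta\})$. For rational $\eta\in(0,1)$, set $\psi_\eta=(1-\eta)\psi+\eta\psi_0$: then $\psi_\eta\in\PSH(X,\theta)$ has analytic singularity type, K\"ahler current curvature $\theta_{\psi_\eta}\geq \eta\theta_{\psi_0}$, and $\psi_\eta|_Y\not\equiv -\infty$, so \cref{lem: analytic_formula} applies. Combined with the trivial inclusion $\mathcal{I}(k\psi_\eta)\subseteq \mathcal{O}_X$, this gives
\[
\int_Y (\theta|_Y+\ddc \psi_\eta|_Y)^m \leq \varliminf_{k\to\infty} \frac{m!}{k^m}\dim\{s|_Y : s\in H^0(X,L^k)\}.
\]
Letting $\eta\searrow 0$ (by multilinearity of the non-pluripolar product) and then taking sup over $\psi$ closes the lower bound.

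The main obstacle is precisely the failure of the K\"ahler current condition for $V_\theta$: were $\theta_{V_\theta}$ a K\"ahler current, \cref{thm: OT_ext_global} would directly extend sections from $Y$ to $X$ and give the lower bound in one line. The convex-combination regularization $\psi\mapsto\psi_\eta$ handles this by off-loading the K\"ahler current requirement onto the auxiliary potential $\psi_0$, whose existence uses precisely bigness of $\{\theta\}$ together with $Y\not\subset\mathrm{B}_+(L)$.
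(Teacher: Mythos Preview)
Your proof is correct and follows the same overall sandwich architecture as the paper's, but the two halves are implemented with slightly different tools.

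For the upper bound, the paper shows directly that $s|_Y \in H^0(Y,\mathcal{O}_Y(L)^k\otimes \mathcal I(k\Rest_Y^\theta(V_\theta)))$ via the boundedness of $h^k(s,s)\mathrm{e}^{-kV_\theta}$ together with \cref{rem:naiverestprecRest}, and then invokes \cite[Theorem~1.1]{DX21} on $Y$. You instead land $s|_Y$ in $H^0(Y,\mathcal{O}_Y(L)^k\otimes \mathcal I(kV_\theta)|_Y)$ and appeal to \cref{thm: relativeDX}. These are essentially equivalent, since \cref{thm: relativeDX} itself is built on \cite[Theorem~1.1]{DX21} plus \cref{thm:OT}.

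For the lower bound the difference is more visible. The paper takes an \emph{increasing} sequence $u_j\in \PSH_>(X,\theta)$ with analytic singularity type and $u_j\nearrow V_\theta$ (from \cite{DP04}, as in \cite{Gup24}), applies \cref{thm: rest_volume_2} to each $u_j$, and passes to the limit using \cite[Theorem~2.3]{DDNL18mono} together with the identity $\vol_{X|Y}(\{\theta\})=\int_Y(\theta|_Y+\ddc V_\theta|_Y)^m$ from \cref{prop:restvol_voltrace}. Your route instead works directly with Matsumura's supremum \eqref{eq: rest_vol_Mat_def}: you regularize an arbitrary competitor $\psi$ to a K\"ahler current $\psi_\eta=(1-\eta)\psi+\eta\psi_0$ (rational $\eta$, so the analytic singularity type is preserved per \eqref{eq: analyt_sing_type_def}), apply \cref{lem: analytic_formula}, and let $\eta\searrow 0$ by multilinearity before taking the sup. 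This buys you independence from the specific approximating sequence of \cite{DP04,Gup24} and from \cite[Theorem~2.3]{DDNL18mono}, at the price of one extra limit and the auxiliary potential $\psi_0$. Both arguments are valid; the paper's is marginally shorter, yours is slightly more self-contained.
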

\begin{proof}There exists $u_j \in \PSH_>(X,\theta)$ with analytic singularity type such that $u_j \nearrow V_\theta$ almost everywhere. Existence of such a sequence is a consequence of \cite[Theorem~3.2]{DP04}, as elaborated in the discussion at the beginning of \cite[Section~4]{Gup24}.

Since $Y$ is not contained in the non-K\"ahler locus of $\{\theta\}$, we can assume that $u_j|_Y \not \equiv -\infty$. From Theorem~\ref{thm: rest_volume_2} and \cref{ex:analytic} we obtain that
\begin{flalign*}
    \int_Y \left(\theta|_Y+\ddc u_j|_Y\right)^m & =     \int_Y \left(\theta|_Y+\ddc \Rest_Y^\theta(u_j)\right)^m \\
    & =  \lim_{k \to \infty} \frac{m!}{k^m} \dim_{\mathbb C}\left\{ s|_Y:  s\in H^0(X,\mathcal{O}_X(L)^k \otimes \mathcal I(k u_j ))\right\}\\
& \leq \varliminf_{k \to \infty} \frac{m!}{k^m} \dim_{\mathbb C}\left\{ s|_Y:  s\in H^0(X,\mathcal{O}_X(L)^k )\right\}
\end{flalign*}
By \cite[Theorem~2.3]{DDNL18mono} and Proposition~\ref{prop:restvol_voltrace}  we conclude that 

\begin{flalign*}
 \vol_{X|Y}(\{\theta\}) =   \int_Y \left(\theta|_Y+\ddc V_\theta|_Y\right)^m &   \leq \varliminf_{k \to \infty} \frac{m!}{k^m} \dim_{\mathbb C}\left\{ s|_Y:  s\in H^0(X,\mathcal{O}_X(L)^k )\right\}
\end{flalign*}

To argue the reverse inequality in  \eqref{eq:relativeDX5}, we use Proposition~\ref{prop:restvol_voltrace} and apply \cite[Theorem~1.1]{DX21} to $\Rest_Y^\theta(V_\theta)$:
\begin{flalign*}
\vol_{X|Y}(\{\theta\}) =     \int_Y \left(\theta|_Y+\ddc \Rest_Y^\theta(V_\theta)\right)^m   = \lim_{k \to \infty} \frac{m!}{k^m} h^0(Y,\mathcal{O}_Y(L)^k\otimes \mathcal I(k \Rest_Y^\theta(V_\theta)))
\end{flalign*}
We are finished if we can argue that $s|_Y \in H^0(Y,\mathcal{O}_Y(L)^k\otimes \mathcal I(k \Rest_Y^\theta(V_\theta)))$ for any $ s\in H^0(X,\mathcal{O}_X(L)^k )$. This simply follows from the fact that $h^k (s,s) \mathrm{e}^{-k V_\theta}$  is bounded above on $X$ for each $k$, hence so is $h^k (s,s) \mathrm{e}^{-k \Rest_Y^\theta(V_{\theta})}$ on $Y$ thanks to \cref{rem:naiverestprecRest}.
\end{proof}

\printbibliography

\small
\noindent {\sc Department of Mathematics, University of Maryland, College Park, USA}\\
{\tt tdarvas@umd.edu}\vspace{0.1in}

\noindent {\sc  Institute of Geometry and Physics, USTC, Hefei, China}\\
{\tt xiamingchen2008@gmail.com }\vspace{0.1in}

\end{document}